\documentclass[11pt]{article}
\usepackage{a4wide}
\usepackage {times,epic,eepic,graphicx}
\usepackage {amsthm,amsmath,amssymb,amsfonts,amscd}
\usepackage {dsfont}
\usepackage {alltt}
\usepackage[utf8]{inputenc}
\usepackage[english]{babel}
\usepackage[english]{isodate}
\usepackage[parfill]{parskip}

\usepackage {graphicx,epsfig,psfrag}
\usepackage {algorithm}
\usepackage {algorithmic}
\usepackage {graphicx}
\usepackage {multirow}
\usepackage [figuresright]{rotating}
\usepackage {rotating}
\usepackage {subfigure}
\usepackage {epsfig}
\usepackage {url}
\usepackage{todonotes}

\textheight23.1cm \textwidth16.9cm \topmargin-1.3cm \topskip0cm
\emergencystretch 30pt \oddsidemargin -.3cm

\parindent0pt \parskip1.5ex

%
%
%

    \renewcommand  {\phi}    { \varphi }

    \newcommand    {\ten}    { \otimes }
    \newcommand    {\Ten}    { \bigotimes }

%

%
%
%
    \newcommand    {\bq}   { \begin{equation} }
    \newcommand    {\eq}   { \end{equation} }
    \newcommand    {\ba}   { \begin{array} }
    \newcommand    {\ea}   { \end{array} }
%
%
%
%
%
%

%
%
%
%
%
%

    \newcommand    {\BbbN}  {\mathds{N}}
    \newcommand    {\N}     {\BbbN}

    \newcommand    {\BbbR}  {\mathds{R}}
    \newcommand    {\R}     {\BbbR}




%
%

%
%

%
%
%

    \newcommand    {\Id}    { \mbox{\bf Id} }
    
    \newcommand    {\id}    { \mbox{id} }

    \newcommand    {\Bild}  { \mbox{\rm range}}
    \newcommand    {\Rang}  { \mbox{\rm rank}}
    
    \newcommand    {\Kern}  { \mbox{\rm kernel} }

    \newcommand    {\Diag}  { \mbox{\rm diag} }

    \newcommand    {\Span}  { \mbox{\rm span} }
    \newcommand    {\argmin}{ \mbox{\rm argmin} }
    \newcommand    {\argmax}{ \mbox{\rm argmax} }

%
%
%
%
    
%

%
    
%
    
%
    
%
    
%

%
%

\newcommand    {\dotprod}[2] { \left \langle #1, #2 \right \rangle }

\newcommand    {\ve}[1]
                {\underline{#1}}

\newcommand    {\dist}[2]
                {\mbox{\rm dist} \left(#1,#2\right) }

\newcommand    {\Ap}
                {\mathcal{A}}

\newcommand    {\Ip}
                {\mathcal{I}}

\newcommand    {\Kp}
                {\mathcal{K}}

\newcommand    {\Vp}
                {\mathcal{V}}

\newcommand{\BIGOP}[1]{\mathop{\mathchoice%
{\raise-0.22em\hbox{\huge $#1$}} {\raise-0.05em\hbox{\Large $#1$}}
{\hbox{\large $#1$}}{#1}}}

\newcommand{\BIGboxplus}{\mathop{\mathchoice%
{\raise-0.35em\hbox{\huge $\boxplus$}}%
{\raise-0.15em\hbox{\Large $\boxplus$}}{\hbox{\large
$\boxplus$}}{\boxplus}}}
\newcommand{\bigtimes}{\BIGOP{\times}}

\newtheorem{theorem}{Theorem}[section]
\newtheorem{remark}[theorem]{Remark}
\newtheorem{defn}[theorem]{Definition}
\newtheorem{note}[theorem]{Notation}

\newtheorem{assumption}[theorem]{Assumption}

\newtheorem{example}[theorem]{Example}

\newtheorem{lemma}[theorem]{Lemma}

\newtheorem{corollary}[theorem]{Corollary}

\newtheorem{proposition}[theorem]{Proposition}

\newlength{\symboheight}
\settoheight{\symboheight}{\hbox{Text mit Grossbuchstaben}}

\graphicspath{{images/}}
\DeclareGraphicsRule{.eps.gz}{eps}{.eps.gz}{`cat #1 | gzip -d}

\ifx\undefined\pdfpageheight
  \def\image#1#2{\epsfig{file=#1,width=#2}}
\else
  \def\image#1#2{\epsfig{file=#1,width=#2}}
\fi


\begin{document}
\title{On the Convergence of Alternating Least Squares Optimisation in Tensor Format Representations}
\author{
  Mike Espig
     \thanks{RWTH Aachen University, Germany}
       \footnote{
     Address: RWTH Aachen University, Department of Mathematics, IGPM
Pontdriesch 14-16, 52062 Aachen Germany.
     Phone: +49 (0)241 80 96343, E-mail address: mike.espig@alopax.de
   }
  \and
  Wolfgang Hackbusch
 \thanks{Max Planck Institute for Mathematics in the Sciences, Leipzig, Germany}
  \and
  Aram Khachatryan
  \footnotemark[1]
  }
\maketitle

\begin{abstract}

The approximation of tensors is important for the efficient
numerical treatment of high dimensional problems, but it remains an
extremely challenging task. One of the most popular approach to
tensor approximation is the alternating least squares method. In our
study, the convergence of the alternating least squares algorithm is
considered. The analysis is done for arbitrary tensor format
representations and based on the multiliearity of the tensor format.
In tensor format representation techniques, tensors are approximated
by multilinear combinations of objects lower dimensionality. The
resulting reduction of dimensionality not only reduces the amount of
required storage but also the computational effort.

\end{abstract}

{\bf Keywords}:  tensor format, tensor representation, tensor
network, alternating least squares optimisation, orthogonal
projection method.

{\bf MSC}:  15A69, 49M20, 65K05, 68W25, 90C26.


\section{Introduction}\label{sec:introduction}
During the last years, tensor format representation techniques were
successfully applied to the solution of high-dimensional problems
like stochastic and parametric partial differential equations
\cite{Doostan12, ESHALIMA11_1, Waehnert2012, KhoromskijSchwab2011,
MatthiesZander2012, Nouy07, Nouy10}. With standard techniques it is
impossible to store all entries of the discretised high-dimensional
objects explicitly. The reason is that the computational complexity
and the storage cost are growing exponentially with the number of
dimensions. Besides of the storage one should also solve this
high-dimensional problems in a reasonable (e.g. linear) time and
obtain a solution in some compressed
 (low-rank/sparse) tensor formats. Among other prominent problems, the efficient solving of
linear systems is one of the most important tasks in scientific computing.\\
We consider a minimisation problem on the tensor space
$\Vp=\Ten_{\nu=1}^d \R^{m_\nu}$ equipped with the Euclidean inner
product $\dotprod{\cdot}{\cdot}$. The objective function $f: \Vp \rightarrow \R$
of the optimisation task is quadratic
\begin{equation}\label{equ:deff}
f(v):=\frac{1}{\|b\|^2}\left[\frac{1}{2} \dotprod{Av}{v} -
\dotprod{b}{v} \right],
\end{equation}
where $A \in \R^{m_1 \cdots m_d \times m_1 \cdots m_d}$ is a
positive definite matrix ($A>0,\, A^T=A$) and $b \in \Vp$. A tensor
$u\in \Vp$ is represented in a tensor format. A tensor format $U:
P_1\times \dots \times P_L \rightarrow \Vp$ is a multilinear map
from the cartesian product of parameter spaces $P_1, \dots, P_L$
into the tensor space $\Vp$. A $L$-tuple of vectors $(p_1, \dots,
p_L)\in P:=P_1 \times \dots \times P_L$ is called a representation
system of $u$ if $u=U(p_1, \dots, p_L)$. The precise definition of
tensor format representations is given in Section
\ref{sec:tensorFormat}. The solution $A^{-1}b=\argmin_{v\in\Vp}f(v)$
is approximated by elements from the range set of the tensor format
$U$, i.e. we are looking for a representation system $(p^*_1, \dots,
p^*_L)\in P$ such that for
\begin{eqnarray}\label{equ:defF}
    F&:=&f \circ U :P \rightarrow \Vp \rightarrow \R\\
    \nonumber
    F(p_1, \dots, p_L)&=&\frac{1}{\|b\|^2}\left[\frac{1}{2} \dotprod{A U(p_1, \dots, p_L)}{U(p_1, \dots, p_L)} -
\dotprod{b}{U(p_1, \dots, p_L)} \right]
\end{eqnarray}
we have
\begin{equation*}
F(p^*_1, \dots, p^*_L) = \inf_{(p_1, \dots, p_L) \in P} F(p_1,
\dots, p_L).
\end{equation*}
The alternating least squares (ALS) algorithm \cite{BEMO02, BEMO05,
ESHAHARS11_2, HoltzALS2012, Kolda09tensordecompositions,
Oseledets2011, OseledetsDolgov2012} is iteratively defined. Suppose
that the $k$-th iterate $\ve{p}^k=(p_1^k, \dots, p_L^k)$ and the
first $\mu-1$ components $p_1^{k+1}, \dots, p_{\mu-1}^{k+1}$ of the
$(k+1)$-th iterate $\ve{p}^{k+1}$ have been determined. The basic
step of the ALS algorithm is to compute the minimum norm solution
\begin{equation*}
    p_\mu^{k+1}:=\argmin_{q_\mu \in P_\mu}F(p_1^{k+1}, \dots, p_{\mu-1}^{k+1}, q_\mu, p_{\mu+1}^{k}, \dots,
    p_{L}^{k}).
\end{equation*}
Thus, in order to obtain $\ve{p}^{k+1}$ from $\ve{p}^k$, we have to
solve successively $L$ ordinary least squares problems.
The ALS algorithm is a nonlinear Gauss--Seidel method. The local
convergence of the nonlinear Gauss--Seidel method to a stationary
point $\ve{p}^* \in P$ follows from the convergence of the linear
Gauss--Seidel method applied to the Hessian $F''(\ve {p}^*)$ at the
limit point $\ve {p}^*$. If the linear Gauss--Seidel method
converges R-linear then there exists a neighbourhood $B(\ve {p}^*)$
of $\ve {p}^*$ such that for every initial guess $\ve{p}^0 \in B(\ve
{p}^*)$ the nonlinear Gauss--Seidel method converges R-linear with
the same rate as the linear Gauss--Seidel method. We refer the
reader to Ortega and Rheinboldt for a description of nonlinear
Gauss--Seidel method \cite[Section 7.4]{OR70} and convergence
analysis \cite[Thm. 10.3.5, Thm. 10.3.4, and Thm. 10.1.3]{OR70}. A
representation system of a represented tensor is not unique, since
the tensor representation $U$ is multilinear. Consequently, the
matrix $F''(\ve {p}^*)$ is not positive definite. Therefore,
convergence of the linear Gauss--Seidel method is in general not
ensured. However, if the Hessian matrix at $\ve{p}^*$ is positive
semidefinite then the linear Gauss--Seidel method still converges
for sequences orthogonal to the kernel of $F''(\ve {p}^*)$, see e.g.
\cite{Keller1965, Lee:2006}. Under useful assumptions on the null
space of $F''(\ve {p}^*)$, Uschmajew et al. \cite{UschmajewALS2013,
UschmajewALS2012} showed local convergence of the ALS method. These
assumptions are related to the nonuniqueness of a representation
system and meaningful in the context of a nonlinear Gauss Seidel
method. However, for tensor format representations the assumptions
are not true in general, see the counterexample of Mohlenkamp
\cite[Section 2.5]{Mohlenkamp2013}
and discussion in \cite[Section 3.4]{UschmajewALS2012}.\\
The current analysis is not based on the mathematical techniques
developed for the nonlinear Gauss--Seidel method, but on the
multilinearity of the tensor representation $U$. This fact is in
 contrast to previous works. The present article is partially related to
 the study by Mohlenkamp \cite{Mohlenkamp2013}. For example, the statement of Lemma
\ref{lemma:dist} is already described for the canonical tensor
format.\\
Section \ref{sec:tensorFormat} contains a unified mathematical
description of tensor formats. The relation between an orthogonal
projection method and the ALS algorithm is explained in Section
\ref{sec:ALS}. The convergence of the ALS method is analysed in
Section \ref{sec:Analyse}, where we consider global convergence.
Further, the rate of convergence is described in detail and explicit
examples for all kind of convergent rates are given. The ALS method
can converge for all tensor formats of practical interest
sublinearly, Q-linearly, and even Q-superlinearly\footnote{We refer
the reader to \cite{OR70} for details concerning convergence
speed.}. We illustrate our theoretical results on  numerical
examples in Section \ref{sec:numericalExamples}.

\section{Unified Description of Tensor Format Representations}\label{sec:tensorFormat}
A tensor format representation for tensors in $\Vp$ is described by
a parameter space $P=\bigtimes_{\mu=1}^{L} P_\mu$ and a multilinear
map $U:P \rightarrow \Vp$ from the parameter space into the tensor
space. For the numerical treatment of high dimensional problems by
means of tensor formats it is essential to distinguish between a
tensor $u \in \Vp$ and a representation system $\ve {p} \in P$ of
$u$, where $u=U(\ve {p})$. The data size of a representation system
is often proportional to $d$. Thanks to the multilinearity of $U$,
the numerical cost of standard operations like matrix vector
multiplication, addition, and computation of scalar products is also
proportional to $d$, see e.g. \cite{ESHAHARS11_2, GR10, HAK09, OS11,
OSTY09}.

\begin{note}[$\N_{n}$]
The set $\N_n$ of natural numbers smaller than $n \in \N$ is denoted
by
\begin{equation*}
    \N_n:=\{j \in \N : 1\leq j \leq n\}.
\end{equation*}
\end{note}

\begin{defn}[Parameter Space, Tensor Format Representation, Representation System]\label{defn:IncidentMapTFTFR}
Let $L \geq d$, $\mu \in \N_d$, and $P_\mu$ a finite dimensional
vector spaces equipped with an inner product $\dotprod{\cdot}{\cdot}_{P_\mu}$.
The \emph{parameter space} $P$ is the following cartesian product
\begin{equation}\label{equ:paprameterSpace}
    P:= \bigtimes_{\mu=1}^{L} P_\mu.
\end{equation}
A multilinear map $U$ from the parameter space $P$ into the tensor
space $\Vp$ is called a \emph{tensor format representation}
\begin{equation}\label{equ:tensorFormat}
    U : \bigtimes_{\mu=1}^{L} P_\mu \rightarrow \Ten_{\nu=1}^d \R^{m_\nu}.
\end{equation}
We say $u \in \Vp$ is represented in the tensor format
representation $U$ if $u \in \Bild U$. A tuple $(p_1, \dots, p_L)
\in P$ is called a \emph{representation system} of $u$ if $u=U(p_1,
\dots, p_L)$.
\end{defn}

\begin{remark}
Due to the multilinearity of $U$, a representation system of a given tensor $u \in \Bild(U)$ is not uniquely
determined.
\end{remark}

\begin{example}\label{exa:tensorFormats}

For the canonical tensor format representation with $r$-terms we
have $L=d$ and $P_\mu = \R^{m_\mu \times r}$. The canonical tensor
format representation with $r$-terms is the following multilinear
map
\begin{eqnarray*}
  U_{CF} : \bigtimes_{\mu=1}^d \R^{m_\mu \times r} &\rightarrow & \Vp\\
   (p_1, \dots, p_d)\mapsto U_{CF}(p_1, \dots, p_d)&:=&\sum_{j=1}^r \Ten_{\mu=1}^d
   p_{\mu,j},
\end{eqnarray*}
where $p_{\mu,j}$ denotes the $j$-th column of the matrix $p_\mu \in
\R^{m_\mu \times r}$. For recent algorithms in the canonical tensor
format we refer to
\cite{ES08, ESHAGA09, ESHA09_1, ESHALIMA11_1, ESHAROSCH09_1}.\\

The tensor train (TT) format representation discussed in \cite{OS11}
is for $d=3$ and representation ranks $r_1, r_2 \in \N$ defined by
the multilinear map
\begin{eqnarray*}
  U_{TT}& :& \R^{m_1 \times r_1} \times \R^{m_2 \times r_1 \times r_2} \times \R^{m_3 \times r_2}   \rightarrow  \R^{m_1} \ten \R^{m_2} \ten \R^{m_3}\\
(p_1, p_2, p_3)&\mapsto& U_{TT}(p_1, p_2, p_3):=\sum_{i=1}^{r_1}
\sum_{j=1}^{r_2} p_{1, i} \ten p_{2, i, j} \ten p_{3, j}.
\end{eqnarray*}
\end{example}%

\section{Orthogonal Projection Method and Alternating Least Squares Algorithm}\label{sec:ALS}

It is shown in the following that the ALS algorithm is an orthogonal
projection method on subspaces of $\Vp=\Ten_{\nu=1}^d \R^{m_\nu}$.
For a better understanding, we briefly repeat the description of
projection methods, see e.g. \cite{brezinski1997projection, saad2000iterative} for a detailed
description.

An orthogonal projection method for solving the linear system $A v =
b$ is defined by means of a sequence $(\Kp_k)_{k\in \N}$  of
subspaces of $\Vp$ and the construction of a sequence
$(v_k)_{k\in\N} \subset \Vp$ such that
\begin{equation*}
    v_{k+1}\in \Kp_k \quad \mbox{and} \quad r_{k+1} = b-A v_{k+1} \perp \Kp_k.
\end{equation*}
A prototype of projection method is explained in Algorithm
\ref{alg:prototypeProjectionMethod}.

\begin{algorithm}[h]\caption{Prototype Projection Method}\label{alg:prototypeProjectionMethod}
\begin{algorithmic}[1]
    \WHILE{Stop Condition}
        \STATE  Compute an orthonormal basis $V_k=\left[u^k_{1}, \dots,
        u^k_{m_k}\right]$ of $\Kp_k$
        \STATE $r_k := b - Av_k$
        \STATE $v_{k+1} = v_k + V_k (V_k^T A V_k)^{-1} V_k^T r_k$
    \STATE $k \mapsto k +1$
    \ENDWHILE
  \end{algorithmic}
\end{algorithm}
\begin{note}[$L(A,B)$]
Let $A, B$ be two arbitrary vector spaces. The vector space of
linear maps from $A$ to $B$ is denoted by
\begin{equation*}
 L(A,B):=\left\{ \phi : A \rightarrow B : \phi \mbox{ is linear}\right\}.
\end{equation*}
\end{note}
In the following, let $U : P \rightarrow \Vp$ be a tensor format
representation, see Definition \ref{defn:IncidentMapTFTFR}. We need
to define subspaces of $\Vp$ in order to show that the ALS algorithm
is an orthogonal projection method. The multilinearity of $U$ and
the special form of the ALS micro-step are important for the
definition of these subspaces. Let $\mu \in \N_L$ and $v \in \Vp$ be
a tensor represented in the tensor format $U$, i.e. there is $(p_1,
\dots, p_{\mu-1}, p_\mu,p_{\mu+1}, \dots, p_L) \in P$ such that
$v=U(p_1, \dots, p_{\mu-1}, p_\mu,p_{\mu+1}, \dots, p_L)$. Since the
tensor format representation $U$ is multilinear we can define a
linear map $W_\mu(p_1, \dots, p_{\mu-1}, p_{\mu+1}, \dots, p_L) \in
L(P_\mu, \Vp)$ such that $v=W_\mu(p_1, \dots, p_{\mu-1}, p_{\mu+1},
\dots, p_L)p_\mu$. The map $W_\mu$ depends multilinearly on the
parameter $p_1, \dots, p_{\mu-1}, p_{\mu+1}, \dots, p_L$. The linear
subspace $\Bild\left(W_\mu(p_1, \dots, p_{\mu-1}, p_{\mu+1}, \dots,
p_L) \right) \subseteq \Vp$ is of great importance for the ALS
method. For the rest of the article, we identify linear maps with
its canonical matrix representation.
\begin{defn}\label{def:W_mu}
    Let $\mu \in \N_L$. We write for a given representation system $\ve{p}=(p_1, \dots, p_L) \in P$
\begin{equation*}
    \ve{p}^{[\mu]}:= (p_1, \dots, p_{\mu-1}, p_{\mu+1}, \dots p_L)
\end{equation*}
and define
\begin{eqnarray}\label{equ:Wmu}
    W_{\mu, \, \ve{p}^{[\mu]}} : P_\mu &\rightarrow& \Vp \\
    \nonumber
    \tilde{p}_\mu \mapsto W_{\mu, \, \ve{p}^{[\mu]}}\tilde{p}_\mu&:=&U(p_1, \dots, p_{\mu-1}, \tilde{p}_\mu ,p_{\mu+1}, \dots
    p_L).
\end{eqnarray}
We simply write $W_{\mu}$ for $W_{\mu, \,
\ve{p}^{[\mu]}}$, i.e. $W_{\mu}:=W_{\mu, \, \ve{p}^{[\mu]}}$ if it is clear from the context which representation system is
considered.
\end{defn}
\begin{proposition}\label{pro:OrthbasisandRepWmu}
Let $\mu \in \N_L$ and $\ve{p}= (p_1, \dots, p_L) \in P$. The
following holds:

\begin{itemize}
  \item[(i)] $W_{\mu, \, \ve{p}^{[\mu]}}$ is a linear map and $\Bild\left(
W_{\mu, \, \ve{p}^{[\mu]}}\right)$ is a linear subspace of $\Vp$.
  \item[(ii)] We have $\Rang\left(W_{\mu, \, \ve{p}^{[\mu]}}\right) \leq
  \dim(P_\mu)$.

  \item[(iii)] $\Bild \left(W_{\mu, \, \ve{p}^{[\mu]}}\right) \subset
  \Bild(U)$, i.e. for all $v \in \Bild \left(W_{\mu,\, \ve{p}^{[\mu]}}\right)$ there exist $\tilde{p_\mu} \in P_\mu$ such that
\begin{equation*}
    v = U(p_1, \dots, p_{\mu-1}, \tilde{p_\mu}, p_{\mu+1}, \dots,
    p_L).
\end{equation*}
  \item[(iv)] Set $H_\mu:=W_{\mu, \, \ve{p}^{[\mu]}}^T W_{\mu, \, \ve{p}^{[\mu]}} \in \R^{\dim P_\mu \times \dim P_\mu}$ and let $H_\mu = \tilde{U}_\mu D_\mu
\tilde{U}^T_\mu$ be the diagonalisation of the square matrix
$H_\mu$, where $D_\mu=\Diag(\delta_{i,\mu})_{i=1, \dots,  \dim
P_\mu}$ with $\delta_{1, \mu} \geq \delta_{2, \mu} \geq \dots \geq \delta_{\dim P_\mu, \mu}$.
Define further $\tilde{D_\mu}=\Diag(\tilde{\delta}_{i,
\mu})_{i=1, \dots, \Rang{(W_{\mu, \, \ve{p}^{[\mu]}})}}$. Then the
columns of
\begin{equation}\label{equ:defVmu}
    V_{\mu}:= W_{\mu, \, \ve{p}^{[\mu]}} \tilde{U}_\mu \tilde{D}_\mu^{-\frac{1}{2}}
\end{equation}
form an orthonormal basis of $\Bild \left(W_{\mu, \, \ve{p}^{[\mu]}}
\right)$ and
\begin{equation}\label{equ:ortEqW}
    V_{\mu}p_{\mu}= W_{\mu, \, \ve{p}^{[\mu]}}  \left(\tilde{U}_\mu
    \tilde{D}_\mu^{-\frac{1}{2}}p_\mu\right)=U(p_1, \dots, p_{\mu-1}, \tilde{U}_\mu
    \tilde{D}_\mu^{-\frac{1}{2}}p_\mu, p_{\mu+1}, \dots,
    p_L).
\end{equation}

\item[(v)] The map
\begin{eqnarray*}
    W_{\mu} : P_1 \times \dots \times P_{\mu-1} \times P_{\mu+1} \dots, P_L &\rightarrow& L(P_\mu,\Vp) \\
    \nonumber
    \tilde{p}_\mu \mapsto W_{\mu, \, \ve{p}^{[\mu]}}
\end{eqnarray*}
is multilinear.
\end{itemize}
\end{proposition}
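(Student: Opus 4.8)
The plan is to handle (i)–(iii) by invoking the multilinearity of $U$ together with elementary linear algebra, and to concentrate the real work on (iv), with (v) following again from multilinearity of $U$.

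For (i): freezing all arguments of the multilinear map $U$ except the $\mu$-th produces a map that is linear in the remaining argument, and this map is exactly $W_{\mu,\,\ve{p}^{[\mu]}}$ by Definition \ref{def:W_mu}; the image of a linear map between vector spaces is a linear subspace, here of $\Vp$. For (ii): $W_{\mu,\,\ve{p}^{[\mu]}}$ is a linear map out of the finite-dimensional space $P_\mu$, so by rank--nullity $\Rang\big(W_{\mu,\,\ve{p}^{[\mu]}}\big)\le\dim P_\mu$. For (iii): by definition every $v\in\Bild\big(W_{\mu,\,\ve{p}^{[\mu]}}\big)$ has the form $v=W_{\mu,\,\ve{p}^{[\mu]}}\tilde p_\mu=U(p_1,\dots,p_{\mu-1},\tilde p_\mu,p_{\mu+1},\dots,p_L)$ for some $\tilde p_\mu\in P_\mu$, which is simultaneously the claimed representation and a witness that $v\in\Bild(U)$.

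For (iv), the substantive item, I would argue as follows. The matrix $H_\mu=W_{\mu,\,\ve{p}^{[\mu]}}^T W_{\mu,\,\ve{p}^{[\mu]}}$ is symmetric positive semidefinite, so its spectral decomposition has the stated form with all $\delta_{i,\mu}\ge 0$. The key bookkeeping step is the identity $\Kern(H_\mu)=\Kern\big(W_{\mu,\,\ve{p}^{[\mu]}}\big)$: if $H_\mu x=0$ then $0=x^TH_\mu x=\|W_{\mu,\,\ve{p}^{[\mu]}}x\|^2$, and the reverse inclusion is trivial. Hence $\Rang(H_\mu)=\Rang\big(W_{\mu,\,\ve{p}^{[\mu]}}\big)=:r$, so exactly $r$ of the eigenvalues are strictly positive; these are collected in $\tilde D_\mu$, and the associated columns of $\tilde U_\mu$ span $\Kern\big(W_{\mu,\,\ve{p}^{[\mu]}}\big)^\perp$. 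One then computes the Gram matrix of the columns of $V_\mu$ from \eqref{equ:defVmu}:
\begin{equation*}
V_\mu^T V_\mu=\tilde D_\mu^{-\frac12}\,\tilde U_\mu^T\big(W_{\mu,\,\ve{p}^{[\mu]}}^T W_{\mu,\,\ve{p}^{[\mu]}}\big)\tilde U_\mu\,\tilde D_\mu^{-\frac12}=\tilde D_\mu^{-\frac12}\,\tilde U_\mu^T H_\mu\tilde U_\mu\,\tilde D_\mu^{-\frac12}=\tilde D_\mu^{-\frac12}\,\tilde D_\mu\,\tilde D_\mu^{-\frac12}=I_r,
\end{equation*}
since restricting the eigenbasis to the $r$ eigenvectors with positive eigenvalue diagonalises $H_\mu$ to $\tilde D_\mu$. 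Thus the columns of $V_\mu$ are orthonormal, in particular linearly independent; since $\Bild(V_\mu)\subseteq\Bild\big(W_{\mu,\,\ve{p}^{[\mu]}}\big)$ and $\dim\Bild(V_\mu)=r=\Rang\big(W_{\mu,\,\ve{p}^{[\mu]}}\big)$, they form an orthonormal basis of $\Bild\big(W_{\mu,\,\ve{p}^{[\mu]}}\big)$. Finally \eqref{equ:ortEqW} is pure associativity, $V_\mu p_\mu=\big(W_{\mu,\,\ve{p}^{[\mu]}}\tilde U_\mu\tilde D_\mu^{-\frac12}\big)p_\mu=W_{\mu,\,\ve{p}^{[\mu]}}\big(\tilde U_\mu\tilde D_\mu^{-\frac12}p_\mu\big)$, combined with Definition \ref{def:W_mu} applied to the argument $\tilde U_\mu\tilde D_\mu^{-\frac12}p_\mu\in P_\mu$.

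For (v): fix an arbitrary $\tilde p_\mu\in P_\mu$; the assignment $\ve{p}^{[\mu]}\mapsto W_{\mu,\,\ve{p}^{[\mu]}}\tilde p_\mu=U(p_1,\dots,p_{\mu-1},\tilde p_\mu,p_{\mu+1},\dots,p_L)$ is multilinear in $\ve{p}^{[\mu]}$ because it is $U$ with its $\mu$-th slot frozen. Since this holds for every $\tilde p_\mu$, and $W_{\mu,\,\ve{p}^{[\mu]}}$ is also linear in $\tilde p_\mu$, the map $\ve{p}^{[\mu]}\mapsto W_{\mu,\,\ve{p}^{[\mu]}}\in L(P_\mu,\Vp)$ is multilinear. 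The only delicate point in the whole proof is the bookkeeping in (iv): verifying that the number of nonzero eigenvalues of $H_\mu$ equals $\Rang\big(W_{\mu,\,\ve{p}^{[\mu]}}\big)$ and handling the truncated matrices $\tilde U_\mu,\tilde D_\mu$ consistently so that the Gram computation returns exactly $I_r$ and the dimension count closes.
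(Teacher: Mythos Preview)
Your proof is correct and follows essentially the same approach as the paper, which dispatches (i)--(v) in a single sentence by invoking the multilinearity of $U$ and ``short calculations''; you have simply carried out those calculations in full, in particular the L\"owdin/SVD-type argument for (iv) that the paper alludes to in the remark immediately following the proposition. There is nothing to correct or add.
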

\begin{proof}
Note that $W_{\mu, \, \ve{p}^{[\mu]}}$ is linear, since the tensor
format $U$ is multilinear. The rest of the assertions follows after
short calculations, where the last assertion (v) is a direct
consequence of the multilinearity of $U$.
\end{proof}

\begin{remark}
In chemistry the definition of $V_\mu$ in Proposition
\ref{pro:OrthbasisandRepWmu} (iv) is often called L\"{o}wdin
transformation, see \cite[Section 3.4.5]{szaOst1996}. Nevertheless,
the construction can be found in several proofs for the existence of
the singular value decomposition, see e.g. \cite[Lemma 2.19]{HA12}.
\end{remark}
\begin{defn}\label{defn:Fmu}
Let $\mu\in \N_L$, $\ve{p}=(p_1, \dots, p_L) \in P$, and $F:P
\rightarrow \R$ as defined in Eq. (\ref{equ:defF}). We define
 \begin{eqnarray}\label{equ.Fmu}
  F_{\mu, \ve{p}^{[\mu]}} : P_\mu
  &\rightarrow& \R\\ \nonumber
  \tilde{p}_\mu \mapsto F_{\mu, \ve{p}^{[\mu]}}(\tilde{p}_\mu)&:= &F(p_1, \dots, p_{\mu-1}, \tilde{p}_\mu , p_{\mu+1}, \dots,
  p_L).
 \end{eqnarray}
We write for convenience $F_{\mu}:=F_{\mu,
\ve{p}^{[\mu]}}$ if it is clear from the context which representation system is
considered.
\end{defn}
\begin{lemma}\label{lemma:p_muNew}
Let $\mu \in \N_L$ and $\ve{p}=(p_1, \dots, p_L) \in P$. We have

\begin{itemize}
  \item [(i)] $F'_\mu(q_\mu)=-W^T_\mu(b-AW_\mu q_\mu)$,
  \item [(ii)] $(V^T_\mu A V_\mu)^{-1}V^T_\mu b = \argmin_{q_\mu \in P_\mu}
    F_{\mu}(q_\mu)$,
\end{itemize}
where $V_\mu$ is defined in Eq. (\ref{equ:defVmu}).
\end{lemma}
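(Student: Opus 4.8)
The plan is to exploit that, with $W_\mu$ fixed, $F_\mu$ is just the quadratic function $f$ composed with the linear map $W_\mu$, so both assertions reduce to elementary facts about quadratic minimisation. For (i), I would write $F_\mu(q_\mu) = \frac{1}{\|b\|^2}\left[\frac12\dotprod{A W_\mu q_\mu}{W_\mu q_\mu} - \dotprod{b}{W_\mu q_\mu}\right]$, using $U(p_1,\dots,\tilde p_\mu,\dots,p_L) = W_{\mu,\ve{p}^{[\mu]}}\tilde p_\mu$ from Definition \ref{def:W_mu}. Differentiating the quadratic form and using $A^T = A$ gives $F'_\mu(q_\mu) = \frac{1}{\|b\|^2}\left(W_\mu^T A W_\mu q_\mu - W_\mu^T b\right) = -\frac{1}{\|b\|^2}W_\mu^T(b - A W_\mu q_\mu)$; the factor $1/\|b\|^2$ is the only discrepancy with the stated formula, which I would treat as an immaterial normalisation (or note that the paper's convention absorbs it).

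For (ii), the point is that $A > 0$ forces the reduced problem to be strictly convex on the relevant finite-dimensional space, so the minimiser is the unique critical point. Using the orthonormal basis $V_\mu$ of $\Bild(W_\mu)$ from Proposition \ref{pro:OrthbasisandRepWmu}(iv), I would substitute $W_\mu q_\mu = V_\mu y$ with $y$ ranging over the coordinate space, so that minimising $F_\mu$ over $q_\mu$ is equivalent to minimising $g(y) := \frac12\dotprod{A V_\mu y}{V_\mu y} - \dotprod{b}{V_\mu y}$ (up to the constant $\|b\|^{-2}$). Since $V_\mu^T A V_\mu > 0$ (as $A > 0$ and $V_\mu$ has full column rank), $g$ is strictly convex with unique minimiser $y^* = (V_\mu^T A V_\mu)^{-1} V_\mu^T b$. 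Then the minimum-norm minimiser of $F_\mu$ corresponds to $W_\mu q_\mu = V_\mu y^*$, which yields the claimed expression $(V_\mu^T A V_\mu)^{-1} V_\mu^T b$ once one identifies, via Eq. (\ref{equ:ortEqW}), the coordinate giving the minimum-norm preimage under $W_\mu$.

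The main obstacle is the bookkeeping between the parameter $q_\mu \in P_\mu$ and the image $W_\mu q_\mu \in \Bild(W_\mu)$: $W_\mu$ need not be injective, so "$\argmin$" on the left is a minimum-norm solution of a possibly rank-deficient least squares problem, and I must check that the stated formula really picks out that minimum-norm element. This is exactly what the Löwdin/SVD-type construction of $V_\mu$ in (\ref{equ:defVmu}) is designed to handle: $\tilde U_\mu \tilde D_\mu^{-1/2}$ maps coordinates of $\Bild(W_\mu)$ back to the orthogonal complement of $\ker W_\mu$, so the composition with $y^*$ gives precisely the minimum-norm $q_\mu$. I would spell this out as the one genuinely non-routine step; the differentiation and convexity arguments are standard and can be stated tersely.
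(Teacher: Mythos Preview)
Your proposal is correct and follows essentially the same route as the paper: write $F_\mu(q_\mu) = f(W_\mu q_\mu)$, differentiate the resulting quadratic for (i), and for (ii) pass to $V_\mu$-coordinates where $V_\mu^T A V_\mu > 0$ gives a unique minimiser $(V_\mu^T A V_\mu)^{-1} V_\mu^T b$. Your treatment is in fact more careful than the paper's own proof, which silently drops the $1/\|b\|^2$ factor in (i) and glosses over the minimum-norm selection in (ii) that you correctly flag as the one non-routine bookkeeping step.
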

\begin{proof} (i): Let $q_\mu \in P_\mu$. We have $f(W_\mu q_\mu)=F_\mu(q_\mu)$ for all $\mu \in
\N_L$ and
\begin{eqnarray*}
   F_\mu(q_\mu)=\frac{1}{\|b\|^2} \left[ \frac{1}{2} \dotprod{A W_\mu q_\mu}{W_\mu q_\mu} - \dotprod{b}{W_\mu q_\mu}
\right]=\frac{1}{\|b\|^2} \left[ \frac{1}{2} \dotprod{W_\mu^T A
W_\mu q_\mu}{q_\mu} - \dotprod{W_\mu^T b}{q_\mu} \right].
\end{eqnarray*}
Since $W_\mu^T A W_\mu$ is symmetric, we have $F'_\mu(q_\mu)=W_\mu^T
A W_\mu q_\mu - W_\mu^T b = -W^T_\mu(b-AW_\mu q_\mu)$.\\
(ii): For $V_\mu q_\mu \in \Bild(W_\mu)$ we can write
\begin{equation*}
    f(V_\mu q_\mu ) = \frac{1}{\|b\|^2} \left[ \frac{1}{2} \dotprod{V_\mu^T A
V_\mu q_\mu}{q_\mu} - \dotprod{V_\mu^T b}{q_\mu} \right].
\end{equation*}
Since $V_\mu$ is a basis of $\Bild(W_\mu)$, we have that $V_\mu^T A
V_\mu$ is positive definite and therefore
\begin{equation*}
    p^*_\mu=\argmin_{q_\mu \in P_\mu}
    F_{\mu}(q_\mu) \Leftrightarrow V_\mu^T A V_\mu p_\mu^* -
    V_\mu^Tb=0 \Leftrightarrow p_\mu^*=(V_\mu^T A V_\mu)^{-1}V_\mu^Tb.
\end{equation*}
\end{proof}
\begin{theorem}\label{theo:FmuPorj}
Let $\mu \in \N_L$ and $\ve{p}=(p_1, \dots, p_L) \in P$. We have
\begin{equation*}
    p_\mu^*=\argmin_{q_\mu \in P_\mu}
    F_{\mu}(q_\mu)  \quad \Leftrightarrow \quad b - A \,V_\mu p_\mu^* \perp \Bild W_\mu,
\end{equation*}
where $V_\mu$ is from Eq. (\ref{equ:defVmu}).
\end{theorem}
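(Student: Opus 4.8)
The plan is to obtain the stated equivalence as a short chain, using Lemma~\ref{lemma:p_muNew} and the basis property of $V_\mu$ from Proposition~\ref{pro:OrthbasisandRepWmu}(iv); beyond this bookkeeping no genuinely new work is needed.

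First I would apply Lemma~\ref{lemma:p_muNew}(ii), which identifies the minimiser: $p_\mu^*=\argmin_{q_\mu\in P_\mu}F_\mu(q_\mu)$ holds if and only if $p_\mu^*=(V_\mu^TAV_\mu)^{-1}V_\mu^Tb$. As observed in the proof of that lemma, $V_\mu^TAV_\mu$ is positive definite (the columns of $V_\mu$ are linearly independent by Proposition~\ref{pro:OrthbasisandRepWmu}(iv), and $A>0$), hence invertible, so this identity may be rearranged equivalently as $V_\mu^TAV_\mu\,p_\mu^*=V_\mu^Tb$, that is $V_\mu^T\left(b-A V_\mu p_\mu^*\right)=0$. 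The last equation says exactly that the residual $b-AV_\mu p_\mu^*$ is orthogonal to every column of $V_\mu$, i.e.\ $b-AV_\mu p_\mu^*\perp\Bild(V_\mu)$.

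Second, I would invoke Proposition~\ref{pro:OrthbasisandRepWmu}(iv): the columns of $V_\mu$ form an orthonormal basis of $\Bild\left(W_{\mu,\,\ve{p}^{[\mu]}}\right)$, so $\Bild(V_\mu)=\Bild(W_\mu)$ and therefore the condition $b-AV_\mu p_\mu^*\perp\Bild(V_\mu)$ is the same as $b-AV_\mu p_\mu^*\perp\Bild W_\mu$. Combining the two equivalences yields the claim.

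A variant avoids Lemma~\ref{lemma:p_muNew}(ii) and uses part (i) together with convexity: $F_\mu=f\circ W_\mu$ is convex because $f$ is convex (as $A>0$) and $W_\mu$ is linear, so $p_\mu^*$ minimises $F_\mu$ precisely when $F_\mu'(p_\mu^*)=-W_\mu^T\left(b-AW_\mu p_\mu^*\right)=0$, i.e.\ $b-AW_\mu p_\mu^*\perp\Bild W_\mu$; one then rewrites $W_\mu p_\mu^*$ in the orthonormal coordinates of $V_\mu$ via \eqref{equ:ortEqW} to pass to $V_\mu p_\mu^*$. In either approach the only step demanding a little care is the passage between the two descriptions of the same subspace, $\Bild V_\mu=\Bild W_\mu$, together with the invertibility of the Galerkin matrix $V_\mu^TAV_\mu$; both are already supplied by Proposition~\ref{pro:OrthbasisandRepWmu}(iv) and the proof of Lemma~\ref{lemma:p_muNew}, so there is no real obstacle here.
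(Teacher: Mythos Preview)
Your proposal is correct and follows essentially the same route as the paper, which simply records that the claim ``follows from Lemma~\ref{lemma:p_muNew} and orthogonal projection theorem.'' You have merely spelled out that reference: Lemma~\ref{lemma:p_muNew}(ii) gives the Galerkin equation $V_\mu^T(b-AV_\mu p_\mu^*)=0$, and the identification $\Bild V_\mu=\Bild W_\mu$ from Proposition~\ref{pro:OrthbasisandRepWmu}(iv) turns this into the stated orthogonality; your alternative via part~(i) and convexity is a minor variant of the same argument.
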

\begin{proof}
Follows from Lemma \ref{lemma:p_muNew} and orthogonal projection
theorem.
\end{proof}
\begin{algorithm}[h]\caption{Alternating Least Squares (ALS)
Algorithm}\label{alg:ALS}
\begin{algorithmic}[1]
    \STATE Set $k:=1$ and choose an initial guess $\ve{p}_1=(p^1_1, \dots, p_L^1)\in
    P$, $\ve{p}_{1,\, 0}:=\ve{p}_1$, and $v_{1}:=U(\ve{p}_1)$.
    \WHILE{Stop Condition}
        \STATE $v_{k, \,0}:=v_{k}$
        \FOR {${1 \leq \mu \leq L}$}
        \STATE  Compute an orthonormal basis $V_{k, \mu }$ of the range space of $W_{k, \mu}:=W_{\mu, \ve{p}_{k, \,
        \mu}^{[\mu]}}$, see e.g. Eq. (\ref{equ:Wmu}) and (\ref{equ:defVmu}).
        \STATE
            \begin{eqnarray}
                p_{\mu}^{k+1}&:=& \underbrace{\tilde{U}_{k, \, \mu} \tilde{D}_{k, \,
            \mu}^{-\frac{1}{2}} (V_{k, \mu}^T A V_{k, \mu})^{-1} \tilde{D}_{k, \,
            \mu}^{-\frac{1}{2}} \tilde{U}_{k, \mu}^T}_{G^+_{k, \, \mu}:=} W^T_{\mu}(p_1^{k+1}, \dots, p^{k+1}_{\mu-1},
p^{k}_{\mu+1}, \dots, p^k_L)b\label{eq:defPmu}\\ \nonumber
                \ve{p}_{k, \mu+1}&:=& (p_1^{k+1}, \dots, \,p_{\mu-1}^{k+1} \,, p_\mu^{k+1}, p_{\mu+1}^{k}, \dots,
                p_L^{k})\\ \nonumber
                r_{k, \mu } &:=& b - A v_{k, \, \mu}\\ \nonumber
                \Rightarrow v_{k, \mu+1} &=& v_{k, \mu} + V_{k, \mu}(V_{k, \mu}^T A V_{k, \mu})^{-1} V_{k, \mu}^T r_{k, \mu} = V_{k, \mu}(V_{k, \mu}^T A V_{k, \mu})^{-1} V_{k, \mu}^T b=U(\ve{p}_{k, \mu+1})
            \end{eqnarray}
            where $\tilde{U}_{k, \, \mu} \tilde{D}_{k, \,
            \mu}^{-\frac{1}{2}}$ is from Eq. (\ref{equ:defVmu})
        \ENDFOR
    \STATE $\ve{p}_{k+1}:= \ve{p}_{k, L}$ and
    $v_{k+1}:=U(\ve{p}_{k+1})$
    \STATE $k \mapsto k + 1$
    \ENDWHILE
  \end{algorithmic}
\end{algorithm}
\begin{figure}[h]
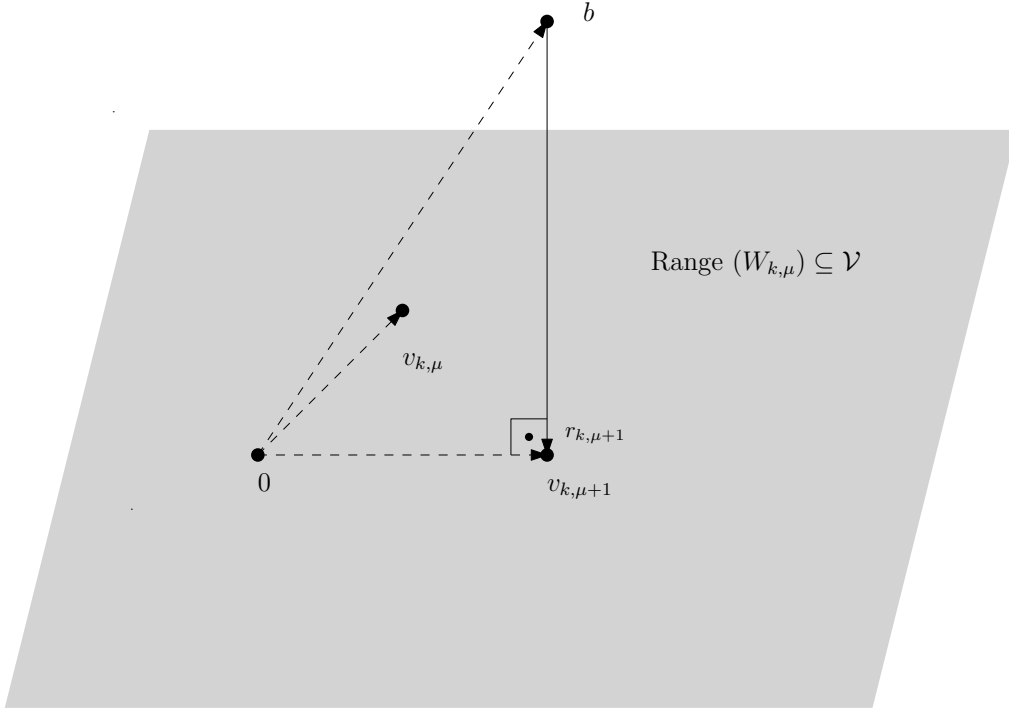

  \centering
  {\image{projection}{0.8\textwidth}}
  \caption{Graphical illustration of an ALS micro-step for the case when $A=\id$. At the current iteration step, we define the linear map
  $W_{k, \mu} \in L(P_\mu, \Vp)$ by means of $v_{k, \mu}$ and the multilinearity of U, cf. Definition \ref{def:W_mu}. The successor $v_{k, \mu+1}$ is then the best approximation of $b$
  on the subspace $\Bild\left(W_{k, \mu}\right) \subseteq \Vp$.}
  \label{bild:projectionAls}
\end{figure}
\begin{remark}\label{remark:pOrthKern}
From the definition of $p_\mu^{k+1}$, it follows directly that
$p_\mu^{k+1} \perp \Kern(W_{\mu,k})$ and $p_\mu^{k+1}$ is the vector
\emph{with smallest norm} that fulfils the normal equation
$G_{k,\mu}\, p_\mu^{k+1} = W_{k,\mu}^Tb$. This is very important for
the convergence analysis of the ALS method and we like to point out
that our results are based on this condition. We must give special
attention in a correct implementation of an ALS micro-step in order
to fulfil this essential property.
\end{remark}

\section{Convergence Analysis}\label{sec:Analyse}

We consider global convergence of the ALS method. The convergence
analysis for an arbitrary tensor format representation $U:
\bigtimes_{\mu=1}^L P_\mu \rightarrow \Vp$ is a quite challenging
task. The objective function $F$ from Eq. (\ref{equ:defF}) is highly
nonlinear. Even the existence of a minimum is in general not
ensured, see \cite{DELI06} and \cite{landsberg12}. We need further
assumptions on the sequence from the ALS method. In order to justify
our assumptions, let us study an example from Lim and de~Silva
\cite{DELI06} where it is shown that the tensor
\begin{equation*}
    b = x \ten x \ten y + x \ten y \ten x + y \ten x \ten x
\end{equation*}
with tensor rank $3$ has no best tensor rank $2$ approximation. Lim
and de~Silva explained this by constructing a sequence $(v_k)_{k \in
\N}$ of rank $2$ tensors with
\begin{equation*}
    v_k = \left(x+\frac{1}{k}\,y\right) \ten
    \left(x+\frac{1}{k}\,y\right) \ten \left(k x+y\right) - x \ten x
    \ten kx \xrightarrow[k \rightarrow \infty]{} b.
\end{equation*}
The linear map $W_{1, k}$ from Definition \ref{def:W_mu} and the
first component vector $p_{1, k }$ of the parameter system have the following form:

\begin{eqnarray*}
 W_{1, k} &=& \left[\underbrace{\left( x + \frac{1}{k}y\right) \ten \left( x + \frac{1}{k}y\right) , \, x \ten
    x}_{\mbox{column vectors of the matrix}}\right]
    \ten \Id_{\R^n},\\
    p_{1}^k &=& \left(
                   \begin{array}{c}
                     1 \\
                     0 \\
                   \end{array}
                 \right) \ten (k x + y) + \left(
                   \begin{array}{c}
                     0 \\
                     1 \\
                   \end{array}
                 \right) \ten k x.
\end{eqnarray*}

It is easy to verify that the equation $W_{1,k} p_{1}^k = v_k$ holds. Furthermore, we have
\begin{eqnarray*}
  \lim_{k \rightarrow \infty} \|p_{1}^k\| &=& \infty,\\
  W_1=\lim_{k \rightarrow \infty} W_{1, k} &=& \left(
                                                 \begin{array}{cc}
                                                   x\ten x & x\ten x \\
                                                 \end{array}
                                               \right)\ten
                                               \Id_{\R^n}.
\end{eqnarray*}
Obviously, the rank of $W_1$  is equal to $n$ but $\Rang(W_{1,
k})=2n$ for all $k \in \N$. This example shows already that we need
assumptions on the boundedness of the parameter system and on the
dimension of the subspace $\Span (W_{\mu,k})$.

\begin{defn}[Critical Points]
The set $\mathfrak{M}$ of \emph{critical points} is defined by
\begin{equation}\label{eq:criticalPoint}
    \mathfrak{M}:=\left\{ v \in \Vp \,: \, \exists \ve{p} \in P : v=U(\ve{p}) \wedge F'(\ve{p})=0\right\}.
\end{equation}
\end{defn}
In our context, critical points are tensors that can be
represented in our tensor format $U$ and there exists a parameter
system $\ve{p}$ such that $(f \circ U)'(\ve{p}) =0$, i.e. $\ve{p}$
is a stationary point of $F=f \circ U$. A representation system of a
tensor $v=U(\ve{p})$ is never uniquely defined since the tensor
format is a multilinear map. The following remark shows that the
non uniqueness of a parameter system has even more subtle effects, in particular when the parameter system of $v=U(\ve{p})$ is also a stationary
point of $F$.

\begin{remark}
In general, $v \in \mathfrak{M}$ does not imply $F^\prime(\hat{p}) =
0$ for any parameter system $\hat{p}$ of $v$, i.e. there exist a tensor format $\tilde{U}$ and two
different  $p,\, \hat{p} \in P$ such that
$\tilde{U}(p)=\tilde{U}(\hat{p})$
 and $0=\tilde{F}'(p)\neq\tilde{F}'(\hat{p})$.
\end{remark}
\begin{proof}
Let

\begin{eqnarray*}
  \tilde{U} : \R^2 \times \R^2 &\rightarrow& \R^2 \ten \R^2 \simeq \R^4\\
    (x, y) &\mapsto& \tilde{U}(x, y) := \left(
                \begin{array}{c}
                  x_1 y_1 +x_2 y_1 \\ x_1 y_1 +x_2 y_1 \\ x_1y_2 \\ x_2y_2 \\
                \end{array}
              \right).
\end{eqnarray*}

Obviously, $\tilde{U}$ is a bilinear map. Further, let $b = \left(
                           \begin{array}{c}
                             1 \\ 1 \\ 0 \\ 1 \\
                           \end{array}
                         \right)$,  $A = \Id$ in the definition of $F$ from Eq. \eqref{equ:defF}, and $e_1$ and $e_2$ the canonical vectors in $\R^2$, i.e.
\begin{equation*}
  e_1 = \left(
            \begin{array}{c}
              1 \\
              0 \\
            \end{array}
          \right),
   \ \ \ \ \
  e_2 = \left(
            \begin{array}{c}
              0 \\
              1 \\
            \end{array}
          \right).
\end{equation*}
Then the following holds
\begin{itemize}
\item[a)] $\tilde{U}(e_1, e_1) = \tilde{U}(e_2, e_1)$,
\item[b)] $F^\prime(e_1, e_1) = 0$,
\item[c)] $F^\prime(e_2, e_1) \neq 0$.
\end{itemize}
Elementary calculations result in
\begin{equation*}
    \tilde{U}(e_1, e_1) = \tilde{U}(e_2, e_1) = \left(
                                   \begin{array}{c}
                                     1  \\ 1 \\ 0 \\ 0 \\
                                   \end{array}
                                 \right).
\end{equation*}
The definition of $F$ from Eq. \eqref{equ:defF} gives
\begin{equation*}
F(x, y) = \frac{1}{3} (\frac{1}{2}(2 (y_1(x_1+x_2))^2 + (x_1y_2)^2 + (x_2y_2)^2) - 2(x_1+x_2)y_1 - x_2y_2).
\end{equation*}
Then
\begin{eqnarray*}
    F(x, e_1) &=& \frac{1}{3} ((x_1 + x_2)^2 - 2(x_1+x_2)), \,
    F(e_1, y) = \frac{1}{3} (y_1^2 + \frac{1}{2}y_2^2 - 2y_1 ) \\
    F(e_2, y) &=& \frac{1}{3} (y_1^2 + \frac{1}{2}y_2^2 - 2y_1 - y_2).
\end{eqnarray*}
and
\begin{equation*}
    F^\prime_x(x, e_1) = \left(
                           \begin{array}{c}
                             \frac{2(x_1 + x_2 - 1)}{3} \\
                             \frac{2(x_1 + x_2 - 1)}{3} \\
                           \end{array}
                         \right), \ \ \ \ \
    F^\prime_y(e_1, y) = \left(
                           \begin{array}{c}
                             \frac{2(y_1-1)}{3} \\
                             \frac{y_2}{3} \\
                           \end{array}
                         \right), \ \ \ \ \
    F^\prime_y(e_2, y) = \left(
                           \begin{array}{c}
                             \frac{2(y_1-1)}{3} \\
                             \frac{y_2-1}{3} \\
                           \end{array}
                         \right).
\end{equation*}
One verifies that $F^\prime(e_1, e_1) = \left(
                              \begin{array}{c}
                                0 \\ 0 \\ 0 \\ 0 \\
                              \end{array}
                            \right)$ and $F^\prime(e_2, e_1) = \left(
                              \begin{array}{c}
                                0 \\ 0 \\ 0 \\ -\frac{1}{3} \\
                              \end{array}
                            \right)$.
\end{proof}

For a convenient understanding, let us briefly repeat the notations
from the ALS method, see Algorithm \ref{alg:ALS}. Let $\mu \in \N_L
\cup \{0\}$, $k \in \N$, and

\begin{eqnarray}\label{eq:defALSseq}
  \ve{p}_{k, \mu} &=&  (p_1^{k+1}, \dots, p_{\mu-1}^{k+1}, p_\mu^{k}, \dots p_L^{k}) \in
  P,\\ \nonumber
  v_{k, \mu} &=& U(\ve{p}_{k, \mu})= U(p_1^{k+1}, \dots, p_{\mu-1}^{k+1}, p_\mu^{k}, \dots
  p_L^{k}) \in \Vp
\end{eqnarray}
be the elements of the sequences $(\ve{p}_{k, \mu})_{k\in\N}$ and
$(v_{k, \mu})_{k\in\N}$ from the ALS algorithm. Note that
$\ve{p}_{k}= \ve{p}_{k,0}=(p_1^k, \dots, p_L^k)$ and
$v_k=U(\ve{p}_k)$.
\begin{defn}[$\Ap(v_k)$]
The set of accumulation points of $(v_k)_{k \in \N}$ is denoted by
$\Ap(v_k)$, i.e.
\begin{equation}\label{eq:AccumulationPoint}
    \Ap(v_k):=\left\{ v \in \Vp : v \mbox{ is an accumulation point of } (v_k)_{k \in \N} \right\}.
\end{equation}
\end{defn}
We demonstrate in Theorem \ref{theo:AcuCrit} that every accumulation
point of $(v_{k})_{k\in\N}$ is a critical point, i.e. $\Ap(v_k)
\subseteq \mathfrak{M}$. This is an existence statement on the
parameter space $P$. Lemma \ref{lemma:pMin} shows us a candidate for
such a parameter system.
\begin{remark}\label{rem:APnotEmpty}
Obviously, if the sequence of parameter $(\ve{p}_k)_{k \in \N}$ is
bounded, then the set of accumulation points of $(\ve{p}_k)_{k \in
\N}$ is not empty. Consequently, the set $\Ap(v_k)$ is not empty,
since the tensor format $U$ is a continuous map.
\end{remark}
\begin{lemma}\label{lemma:pMin}
Let the sequence $(\ve{p}_k)_{k \in \N}$ from the ALS method be bounded and
define for $J \subseteq \N$ the following set of
accumulation points:
\begin{equation*}
    \Ap_{J}:=\bigcup_{\mu=0}^{L-1} \left\{\ve{p} \in P : \ve{p} \mbox{ is an accumulation point of }  (\ve{p}_{k, \mu})_{k \in
    J}\right\}.
\end{equation*}
There exists $\ve{p}^*=(p^*_1, \dots, p^*_L) \in \Ap_{J}$ such that
\begin{equation*}
    \|\ve{p}^*\|= \min_{\ve{p} \in \Ap_{J}}\|\ve{p}\|.
\end{equation*}
\end{lemma}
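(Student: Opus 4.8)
The plan is to recognise this as a pure compactness statement: $\Ap_J$ is a nonempty compact subset of the finite dimensional parameter space $P=\bigtimes_{\mu=1}^L P_\mu$, and the continuous map $\ve{p}\mapsto\|\ve{p}\|$ therefore attains its infimum on it. First I would dispose of the degenerate case by assuming the index set $J$ is infinite — otherwise $\Ap_J=\emptyset$ and there is nothing to say — which is in any case the only situation in which the lemma is later applied.

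The first substantive step is to show $\Ap_J\neq\emptyset$. For each fixed $\mu\in\{0,\dots,L-1\}$ the tuple $\ve{p}_{k,\mu}=(p_1^{k+1},\dots,p_{\mu-1}^{k+1},p_\mu^{k},\dots,p_L^{k})$ has all of its component vectors among the component vectors of $\ve{p}_k$ and $\ve{p}_{k+1}$; hence a uniform bound $\|\ve{p}_k\|\leq C$ immediately yields $\|\ve{p}_{k,\mu}\|\leq\sqrt{2}\,C$ for all $k$. Since $P$ is finite dimensional and $J$ is infinite, Bolzano--Weierstrass produces at least one accumulation point of $(\ve{p}_{k,\mu})_{k\in J}$, so each of the $L$ sets in the union defining $\Ap_J$ is nonempty.

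Next I would argue that $\Ap_J$ is compact. Each constituent set — the set of subsequential limits of $(\ve{p}_{k,\mu})_{k\in J}$ — is closed, by the standard fact that a limit of cluster points of a sequence is again a cluster point (a diagonal argument extracts the required subsequence); hence the finite union $\Ap_J$ is closed. It is also bounded, because every point of $\Ap_J$ is a limit of points $\ve{p}_{k,\mu}$, all of which lie in the fixed ball of radius $\sqrt{2}\,C$. A closed bounded subset of a finite dimensional space is compact. Finally, since $\ve{p}\mapsto\|\ve{p}\|$ is continuous and $\Ap_J$ is nonempty and compact, the norm attains a minimum on $\Ap_J$; the minimiser is the asserted $\ve{p}^*=(p_1^*,\dots,p_L^*)$.

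There is no genuine obstacle here — the lemma is a soft consequence of finite-dimensionality — so the only points deserving care are (i) checking that boundedness of $(\ve{p}_k)_{k\in\N}$ really transfers to each $(\ve{p}_{k,\mu})_{k\in\N}$, which is immediate from the block structure of $\ve{p}_{k,\mu}$, and (ii) recalling that the set of accumulation points of a sequence is always closed. The weight of this lemma lies not in its proof but in its use: it singles out a distinguished minimal-norm accumulation point $\ve{p}^*$, which becomes the candidate stationary parameter system feeding into Theorem~\ref{theo:AcuCrit}.
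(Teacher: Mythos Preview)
Your argument is correct and follows exactly the same compactness route as the paper: boundedness of $(\ve{p}_k)$ transfers to each $(\ve{p}_{k,\mu})$, whence each set of accumulation points is nonempty and compact, and the norm attains its minimum on the finite union. The paper's own proof is a terse three-line version of what you wrote; your added care about $J$ being infinite and the explicit closedness argument are sound refinements rather than departures.
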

\begin{proof}
Since the sequence $(\ve{p}_k)_{k \in \N}$ is bounded, it follows
from the definition in Eq. (\ref{eq:defALSseq}) that
$(\ve{p}_{k,\mu})_{k \in \N}$ is also bounded. Therefore the set
$\left\{\ve{p} \in P : \ve{p} \mbox{ is an accumulation point of }
(\ve{p}_{k, \mu})_{k \in J}\right\}$ is not empty and compact. Hence
$\Ap$ is a compact and non-empty set.
\end{proof}
We are now ready to establish our main assumptions on the sequence
from the ALS method.
\begin{assumption} During the article, we say that $(\ve{p}_k)_{k \in
\N}$ satisfies assumption A1 or assumption A2 if the following holds
true:
\begin{itemize}
  \item [{\bf (A1:)}] The sequence $(\ve{p}_k)_{k \in \N}$ is
  bounded.
  \item [{\bf (A2:)}] The sequence $(\ve{p}_k)_{k \in \N}$ is
  bounded and for $J \subseteq \N$ we have
    \begin{equation}\label{eq:rankCond}
        \forall \mu \in \N_L: \exists k_0 \in J : \forall k \in J : \quad k\geq k_0 \quad \Rightarrow \quad  \Rang(W_{k,\mu}) = \Rang(W^*_{\mu}),
    \end{equation}
where $\ve{p}^*=(p^*_1, \dots, p^*_L) \in \Ap_{J}$ is a accumulation
point form Lemma \ref{lemma:pMin} and
\begin{eqnarray*}
  W_{k,\mu} &=& W_\mu(p_1^{k+1},\dots p_{\mu-1}^{k+1}, p_{\mu+1}^{k}, \dots, p_{L}^{k} ), \\
  W^*_{\mu} &=& W_\mu(p_1^{*},\dots p_{\mu-1}^{*}, p_{\mu+1}^{*}, \dots, p_{L}^{*} ).
\end{eqnarray*}
\end{itemize}
\end{assumption}
\begin{remark}
In the proof of Theorem \ref{theo:AcuCrit}, assumption A2 ensures that the ALS method depends continuously
on the parameter system $\ve{p}_{k, \mu}$, i.e. $G_{k, \mu}^+W^T_{k, \mu}b \xrightarrow[k \rightarrow
\infty]{} G^+_\mu W_\mu^Tb$ for a convergent subsequence $(\ve{p}_{k, \mu})_{k \in J}$.
\end{remark}

Using the notations and definitions from Section \ref{sec:ALS}, we
define further
\begin{eqnarray}\label{equ:defAkmu}
  A_{k, \mu}&:=&V_{k, \mu}^T A V_{k, \mu},
\end{eqnarray}
for $k \in \N$ and $\mu \in \N_L$.

For the ALS method there is an explicit formula for the decay of the
values between $f(v_{k, \, \mu+1})$ and $f(v_{k, \, \mu})$. The relation between the function values from
Eq. (\ref{eq:fmicro}) is crucial for the convergence analysis of the ALS method.
\begin{lemma}\label{lemma:fmicro}
Let $k\in \N$, $\mu \in \N_L$. We have
\begin{equation}\label{eq:fmicro}
    f(v_{k, \, \mu+1}) - f(v_{k, \mu }) = -\frac{1}{2} \frac{\dotprod{V_{k, \mu} A_{k, \mu}^{-1} V_{k, \mu}^T r_{k,
    \mu}}{r_{k, \mu }}}{\|b\|^2}
\end{equation}
\end{lemma}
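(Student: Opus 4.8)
The plan is to compute both $f(v_{k,\mu+1})$ and $f(v_{k,\mu})$ explicitly and subtract. The key observation, already recorded in Algorithm \ref{alg:ALS}, is that the micro-step updates $v_{k,\mu}$ to
\[
 v_{k,\mu+1} = v_{k,\mu} + V_{k,\mu} A_{k,\mu}^{-1} V_{k,\mu}^T r_{k,\mu},
\]
where $r_{k,\mu} = b - A v_{k,\mu}$ and $A_{k,\mu} = V_{k,\mu}^T A V_{k,\mu}$. So I would set $w := V_{k,\mu} A_{k,\mu}^{-1} V_{k,\mu}^T r_{k,\mu}$ (the correction vector) and expand $f(v_{k,\mu}+w)$ using the quadratic form of $f$ from Eq. \eqref{equ:deff}: since $f(v) = \tfrac{1}{\|b\|^2}\bigl[\tfrac12\dotprod{Av}{v} - \dotprod{b}{v}\bigr]$ and $A$ is symmetric, we get
\[
 f(v_{k,\mu}+w) - f(v_{k,\mu}) = \frac{1}{\|b\|^2}\Bigl[\dotprod{A v_{k,\mu}}{w} + \tfrac12\dotprod{Aw}{w} - \dotprod{b}{w}\Bigr]
 = \frac{1}{\|b\|^2}\Bigl[\tfrac12\dotprod{Aw}{w} - \dotprod{r_{k,\mu}}{w}\Bigr].
\]

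The next step is to simplify the two terms in the bracket using the definition of $w$. For the linear term, $\dotprod{r_{k,\mu}}{w} = \dotprod{r_{k,\mu}}{V_{k,\mu} A_{k,\mu}^{-1} V_{k,\mu}^T r_{k,\mu}} = \dotprod{A_{k,\mu}^{-1} V_{k,\mu}^T r_{k,\mu}}{V_{k,\mu}^T r_{k,\mu}}$. For the quadratic term, using $A w = A V_{k,\mu} A_{k,\mu}^{-1} V_{k,\mu}^T r_{k,\mu}$, I compute $\dotprod{Aw}{w} = \dotprod{V_{k,\mu}^T A V_{k,\mu} A_{k,\mu}^{-1} V_{k,\mu}^T r_{k,\mu}}{A_{k,\mu}^{-1} V_{k,\mu}^T r_{k,\mu}}$, and here the crucial cancellation $V_{k,\mu}^T A V_{k,\mu} = A_{k,\mu}$ collapses this to $\dotprod{V_{k,\mu}^T r_{k,\mu}}{A_{k,\mu}^{-1} V_{k,\mu}^T r_{k,\mu}}$, i.e. exactly the same scalar as the linear term. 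Hence the bracket equals $\tfrac12 s - s = -\tfrac12 s$ with $s := \dotprod{A_{k,\mu}^{-1} V_{k,\mu}^T r_{k,\mu}}{V_{k,\mu}^T r_{k,\mu}} = \dotprod{V_{k,\mu} A_{k,\mu}^{-1} V_{k,\mu}^T r_{k,\mu}}{r_{k,\mu}}$, which gives precisely Eq. \eqref{eq:fmicro}.

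I do not anticipate a serious obstacle here; the lemma is essentially the standard identity for the energy decrease of an $A$-orthogonal (Galerkin) projection step, and everything reduces to the symmetry of $A$ and the bookkeeping identity $V_{k,\mu}^T A V_{k,\mu} = A_{k,\mu}$. The only point requiring a word of care is that $A_{k,\mu}$ is genuinely invertible, which is guaranteed because $V_{k,\mu}$ has full column rank (its columns form an orthonormal basis of $\Bild W_{k,\mu}$, by Proposition \ref{pro:OrthbasisandRepWmu}(iv)) and $A$ is positive definite, so $A_{k,\mu}$ is positive definite. One may alternatively derive the formula from Lemma \ref{lemma:p_muNew}(ii) by noting that $v_{k,\mu+1}$ is the $f$-minimiser over the affine-free subspace $\Bild W_{k,\mu}$ and then using the parallelogram-type expansion of $f$ about that minimiser, but the direct computation above is shorter.
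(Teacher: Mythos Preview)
Your proof is correct and follows essentially the same route as the paper: set the correction $w=\Delta_{k,\mu}:=V_{k,\mu}A_{k,\mu}^{-1}V_{k,\mu}^T r_{k,\mu}$, expand the quadratic $f$ at $v_{k,\mu}+\Delta_{k,\mu}$, and use $V_{k,\mu}^T A V_{k,\mu}=A_{k,\mu}$ to see that the quadratic term equals the linear term, leaving $-\tfrac{1}{2}\dotprod{V_{k,\mu}A_{k,\mu}^{-1}V_{k,\mu}^T r_{k,\mu}}{r_{k,\mu}}/\|b\|^2$. Your added remark on the invertibility of $A_{k,\mu}$ is a welcome clarification but not a departure from the paper's argument.
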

\begin{proof}
From Algorithm \ref{alg:ALS}, we have that $v_{k, \mu+1} = v_{k,
\mu} + \Delta_{k, \mu }$, where $\Delta_{k, \mu }:=V_{k, \mu}A_{k,
\mu}^{-1} V_{k, \mu}^T r_{k, \mu}$. Elementary calculations give

\begin{eqnarray*}
  f(v_{k, \mu+1}) &=& \frac{1}{\|b\|^2}\left[\dotprod{A(v_{k,
\mu} + \Delta_{k, \mu })}{v_{k, \mu} + \Delta_{k, \mu }} -
\dotprod{b}{v_{k, \mu} + \Delta_{k, \mu }}\right]\\
&=& f(v_{k, \mu}) + \frac{- \dotprod{r_{k, \, \mu}}{\Delta_{k, \, \mu}}+
\frac{1}{2}\dotprod{A\Delta_{k, \, \mu}}{\Delta_{k, \, \mu}}}{\|b\|^2}\\
&=&f(v_{k, \mu}) + \frac{- \dotprod{V_{k, \mu} A_{k, \mu}^{-1} V_{k,
\mu}^Tr_{k, \, \mu}}{r_{k, \, \mu}} + \frac{1}{2}\dotprod{AV_{k,
\mu}A_{k, \mu}^{-1} V_{k, \mu}^T r_{k, \mu}}{V_{k, \mu}A_{k,
\mu}^{-1} V_{k, \mu}^T r_{k, \mu}}}{\|b\|^2}\\
&=&f(v_{k, \mu}) + \frac{- \dotprod{V_{k, \mu} A_{k, \mu}^{-1} V_{k,
\mu}^Tr_{k, \, \mu}}{r_{k, \, \mu}} + \frac{1}{2}\dotprod{V_{k, \mu}
A_{k, \mu}^{-1} V_{k, \mu}^Tr_{k, \, \mu}}{r_{k, \, \mu}}}{\|b\|^2}\\
&=&f(v_{k, \mu}) - \frac{1}{2\|b\|^2}\dotprod{V_{k, \mu} A_{k,
\mu}^{-1} V_{k, \mu}^Tr_{k, \, \mu}}{r_{k, \, \mu}}.
\end{eqnarray*}
\end{proof}

\begin{corollary}\label{cor:functionValues}
$(f(v_k))_{k\in \N} \subset \R$ is a descending sequence and there exists $\alpha \in \R$ such that $f(v_k)  \xrightarrow[k
\rightarrow \infty]{} \alpha$.
\end{corollary}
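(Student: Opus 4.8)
The plan is to combine Lemma \ref{lemma:fmicro} (the explicit micro-step decrease formula) with the fact that $f$ is bounded below, which follows immediately from $A > 0$. First I would observe that within a single outer iteration $k$, the function values satisfy
\begin{equation*}
    f(v_{k,0}) \geq f(v_{k,1}) \geq \dots \geq f(v_{k,L}),
\end{equation*}
and since $v_{k,0} = v_k$ and $v_{k,L} = v_{k+1}$ by the bookkeeping in Algorithm \ref{alg:ALS}, chaining these inequalities across all $k$ gives that $(f(v_k))_{k \in \N}$ is monotonically non-increasing. The key point making each inequality valid is Lemma \ref{lemma:fmicro}: the increment $f(v_{k,\mu+1}) - f(v_{k,\mu})$ equals $-\tfrac{1}{2}\|b\|^{-2}\dotprod{V_{k,\mu}A_{k,\mu}^{-1}V_{k,\mu}^T r_{k,\mu}}{r_{k,\mu}}$, and since $A_{k,\mu} = V_{k,\mu}^T A V_{k,\mu}$ is positive definite (as $V_{k,\mu}$ has full column rank and $A>0$), so is $V_{k,\mu} A_{k,\mu}^{-1} V_{k,\mu}^T$ on its range; hence the quadratic form is nonnegative and the increment is $\leq 0$.

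Next I would establish that $(f(v_k))_{k\in\N}$ is bounded below. Since $A$ is symmetric positive definite, $f(v) = \|b\|^{-2}\bigl[\tfrac12\dotprod{Av}{v} - \dotprod{b}{v}\bigr]$ is a strictly convex quadratic on $\Vp$ attaining its global minimum at $v = A^{-1}b$, with minimum value $-\tfrac12\|b\|^{-2}\dotprod{A^{-1}b}{b} > -\infty$. Therefore $f(v_k) \geq f(A^{-1}b)$ for all $k$.

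A monotone non-increasing sequence of real numbers that is bounded below converges; denote its limit by $\alpha \in \R$. This proves the corollary. There is essentially no obstacle here: the only points needing a word of care are (i) verifying the full-column-rank of $V_{k,\mu}$ so that $A_{k,\mu}$ is invertible and $V_{k,\mu}A_{k,\mu}^{-1}V_{k,\mu}^T \succeq 0$ — but this is exactly Proposition \ref{pro:OrthbasisandRepWmu}(iv), which states the columns of $V_{k,\mu}$ form an orthonormal basis of $\Bild(W_{k,\mu})$ — and (ii) the bookkeeping identities $v_{k,0}=v_k$, $v_{k,L}=v_{k+1}$, which are immediate from Algorithm \ref{alg:ALS}. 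Everything else is the elementary fact that a bounded monotone sequence converges.
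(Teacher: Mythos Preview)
Your proposal is correct and follows essentially the same approach as the paper: telescope the micro-step decrease formula of Lemma~\ref{lemma:fmicro}, use positive definiteness of $A_{k,\mu}$ to get monotonicity, and invoke boundedness below from $A>0$ to conclude convergence. The only cosmetic difference is that the paper writes the quadratic form as $\dotprod{A_{k,\mu}^{-1}V_{k,\mu}^T r_{k,\mu}}{V_{k,\mu}^T r_{k,\mu}}$, which makes the nonnegativity immediate from $A_{k,\mu}^{-1}\succ 0$ without needing to discuss the range of $V_{k,\mu}A_{k,\mu}^{-1}V_{k,\mu}^T$.
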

\begin{proof}
Let $k \in \N$ and $\mu \in \N_L$. From Lemma \ref{lemma:fmicro} it
follows that
\begin{eqnarray*}
  f(v_{k+1}) - f(v_k) &=& f(v_{k, L}) - f(v_{k, 0}) =
  \sum_{\mu=1}^L f(v_{k, \, \mu}) - f(v_{k, \mu -1})\\ &=&- \frac{1}{2 \|b\|^2}
  \sum_{\mu=0}^{L-1} \dotprod{ A_{k, \mu}^{-1} V_{k, \mu}^T r_{k,
    \mu}}{V_{k, \mu}^T r_{k, \mu }} \leq 0,
\end{eqnarray*}
since the matrices $A_{k, \mu}^{-1}$ are positive definite. This
shows that $(f(v_k))_{k\in \N} \subset \R$ is a descending sequence.
The sequence of function values $(f(v_k))_{k\in \N}$ is bounded from
below, since the matrix $A$ in the definition of $f$ is positive
definite. Therefore, there exists an $\alpha \in \R$ such that
$f(v_k) \xrightarrow[k \rightarrow \infty]{} \alpha$.
\end{proof}
\begin{lemma}\label{lemma:functionValues}
Let $(v_{k,\mu})_{k\in \N, \mu\in\N_L} \subset \Vp$ be the sequence
from Algorithm \ref{alg:ALS}. We have
\begin{equation}\label{eq:lemmafunctionValues}
f(v_{k,\mu}) = -\frac{1}{2\|b\|^2}\dotprod{
v_{k,\mu}}{b}=-\frac{1}{2\|b\|^2}\|v_{k,\mu}\|^2_A
\end{equation}
for all $k\in \N, \mu\in\N_L$, where
$\dotprod{v}{w}_A:=\dotprod{Av}{w}$ and
$\|v\|_A:=\sqrt{\dotprod{v}{v}_A}$.
\end{lemma}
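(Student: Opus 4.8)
The plan is to show both equalities in \eqref{eq:lemmafunctionValues} directly, using the fact — established in Algorithm \ref{alg:ALS} — that each micro-iterate $v_{k,\mu}$ is produced by an orthogonal projection of the solution onto $\Bild(W_{k,\mu-1})$, equivalently that $v_{k,\mu} = V_{k,\mu-1}(V_{k,\mu-1}^T A V_{k,\mu-1})^{-1} V_{k,\mu-1}^T b$. The key structural observation is the Galerkin orthogonality: by Theorem \ref{theo:FmuPorj}, the residual $r_{k,\mu} = b - A v_{k,\mu}$ is orthogonal to $\Bild(W_{k,\mu-1})$, and in particular $\dotprod{b - A v_{k,\mu}}{v_{k,\mu}} = 0$ because $v_{k,\mu}$ itself lies in that subspace. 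This single identity $\dotprod{A v_{k,\mu}}{v_{k,\mu}} = \dotprod{b}{v_{k,\mu}}$ is the engine of the whole lemma.

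First I would substitute this identity into the definition \eqref{equ:deff} of $f$: since $\dotprod{A v_{k,\mu}}{v_{k,\mu}} = \dotprod{b}{v_{k,\mu}}$, we get
\begin{equation*}
f(v_{k,\mu}) = \frac{1}{\|b\|^2}\left[\tfrac{1}{2}\dotprod{b}{v_{k,\mu}} - \dotprod{b}{v_{k,\mu}}\right] = -\frac{1}{2\|b\|^2}\dotprod{v_{k,\mu}}{b},
\end{equation*}
which is the first equality. For the second equality, I would use the same identity in the other direction: $\dotprod{v_{k,\mu}}{b} = \dotprod{b}{v_{k,\mu}} = \dotprod{A v_{k,\mu}}{v_{k,\mu}} = \|v_{k,\mu}\|_A^2$, so $-\frac{1}{2\|b\|^2}\dotprod{v_{k,\mu}}{b} = -\frac{1}{2\|b\|^2}\|v_{k,\mu}\|_A^2$. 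Care is needed only at the boundary index $\mu$: for $\mu \in \N_L$ the iterate $v_{k,\mu}$ is the output of the $(\mu-1)$-st micro-step (or, for $\mu=1$, one should note $v_{k,1}=v_{k-1,L}$, or equivalently invoke the convention $v_{k,0}=v_k$ together with $v_k = U(\ve p_{k-1,L})$ being itself a projection output); in every case $v_{k,\mu} \in \Bild(W_{k,\mu-1})$ and the Galerkin relation applies. The one genuinely degenerate case is the very first iterate $v_1 = U(\ve p_1)$, where I would remark that the statement is understood for iterates actually produced by a micro-step, as is implicit in Algorithm \ref{alg:ALS}.

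The main (and only) obstacle is bookkeeping: making sure the projection/orthogonality property is correctly attached to the right subspace for each pair $(k,\mu)$, and confirming that the definition of $V_{k,\mu}$ together with the formula $v_{k,\mu+1} = V_{k,\mu}(V_{k,\mu}^T A V_{k,\mu})^{-1} V_{k,\mu}^T b$ displayed in Algorithm \ref{alg:ALS} indeed yields $v_{k,\mu+1} \perp_{(\cdot)} r_{k,\mu+1}$ in the sense needed, namely $\dotprod{v_{k,\mu+1}}{b} = \dotprod{A v_{k,\mu+1}}{v_{k,\mu+1}}$. This follows because $v_{k,\mu+1} = V_{k,\mu} x$ for $x = A_{k,\mu}^{-1} V_{k,\mu}^T b$, whence $\dotprod{A v_{k,\mu+1}}{v_{k,\mu+1}} = \dotprod{A_{k,\mu} x}{x} = \dotprod{V_{k,\mu}^T b}{x} = \dotprod{b}{V_{k,\mu}x} = \dotprod{b}{v_{k,\mu+1}}$. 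Once this computation is in hand, both claimed equalities drop out immediately as above.
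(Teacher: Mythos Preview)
Your proof is correct and follows essentially the same route as the paper. The paper establishes the identity $\dotprod{v_{k,\mu}}{v_{k,\mu}}_A = \dotprod{v_{k,\mu}}{b}$ by direct substitution of the formula $v_{k,\mu}=V_{k,\mu-1}(V_{k,\mu-1}^T A V_{k,\mu-1})^{-1}V_{k,\mu-1}^T b$ and then appeals to the definition of $f$; your explicit computation in the final paragraph is exactly this, and your Galerkin-orthogonality framing via Theorem~\ref{theo:FmuPorj} is just a conceptual restatement of the same identity. Your remark about the boundary index (the very first iterate not arising from a projection step) is a point the paper glosses over.
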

\begin{proof}
Let $k\in \N$ and $\mu\in\N_L$. We have
\begin{eqnarray*}
  \dotprod{v_{k, \mu}}{v_{k, \mu}}_A &=& \dotprod{A V_{k, \mu-1} \left(V_{k, \mu-1}^T AV_{k, \mu-1}\right)^{-1}V_{k, \mu-1}^T b}{V_{k, \mu-1} \left(V_{k, \mu-1}^T AV_{k, \mu-1}\right)^{-1}V_{k, \mu-1}^T
  b}\\ &=& \dotprod{V_{k, \mu-1}^T b}{\left(V_{k, \mu-1}^T AV_{k, \mu-1}\right)^{-1}V_{k, \mu-1}^T
  b}=\dotprod{v_{k, \mu}}{b}.
\end{eqnarray*}
The rest follows from the definition of $f$, see Eq.
(\ref{equ:deff}).
\end{proof}
\begin{corollary}\label{cor:equf} Let $(v_{k,\mu})_{k\in \N, \mu\in\N_L} \subset \Vp$ be the sequence of represented
tensors from the ALS algorithm. The following holds:
\begin{itemize}
  \item [(a)] $f(v_{k, \mu+1}) \leq f(v_{k, \mu})$,
  \item [(b)] $\|v_{k, \mu+1}\|^2_A \geq \|v_{k, \mu}\|^2_A$,
  \item [(c)] $\dotprod{v_{k,\mu+1}}{b} \geq \dotprod{
v_{k,\mu}}{b}$.
\end{itemize}
\end{corollary}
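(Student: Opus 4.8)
The plan is to read off all three statements directly from Lemma \ref{lemma:fmicro} together with Lemma \ref{lemma:functionValues}, without any fresh computation: (a) is the substantive point and (b), (c) are cosmetic reformulations of it.

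First I would prove (a). Lemma \ref{lemma:fmicro} gives
\[
  f(v_{k,\mu+1}) - f(v_{k,\mu}) = -\frac{1}{2\|b\|^2}\,\dotprod{V_{k,\mu}A_{k,\mu}^{-1}V_{k,\mu}^T r_{k,\mu}}{r_{k,\mu}} = -\frac{1}{2\|b\|^2}\,\dotprod{A_{k,\mu}^{-1}\bigl(V_{k,\mu}^T r_{k,\mu}\bigr)}{V_{k,\mu}^T r_{k,\mu}},
\]
where $V_{k,\mu}$ has been moved across the inner product onto its transpose. Since the columns of $V_{k,\mu}$ form an orthonormal basis of $\Bild(W_{k,\mu})$ by Proposition \ref{pro:OrthbasisandRepWmu}(iv), the matrix $V_{k,\mu}$ has full column rank, so $A_{k,\mu}=V_{k,\mu}^T A V_{k,\mu}$ is positive definite (as $A>0$) and therefore so is $A_{k,\mu}^{-1}$. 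Hence the quadratic form on the right is nonnegative and $f(v_{k,\mu+1}) \le f(v_{k,\mu})$; this is exactly the summand already estimated in the proof of Corollary \ref{cor:functionValues}.

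For (b) and (c) I would invoke Lemma \ref{lemma:functionValues}, which for every $k$ and $\mu$ yields
\[
  \|v_{k,\mu}\|_A^2 = \dotprod{v_{k,\mu}}{b} = -2\|b\|^2\, f(v_{k,\mu}),
\]
and likewise at index $\mu+1$. Multiplying the inequality of (a) by the positive number $-2\|b\|^2$ reverses it, giving $\dotprod{v_{k,\mu+1}}{b} \ge \dotprod{v_{k,\mu}}{b}$, which is (c); replacing $\dotprod{v_{k,\cdot}}{b}$ by $\|v_{k,\cdot}\|_A^2$ on both sides via the same identity gives (b).

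I do not anticipate a genuine obstacle. The one point deserving care is the strict positive definiteness of $A_{k,\mu}$, which rests on $V_{k,\mu}$ having linearly independent columns; this is guaranteed by Proposition \ref{pro:OrthbasisandRepWmu}(iv) and was already used implicitly in Lemma \ref{lemma:p_muNew}(ii) and in Corollary \ref{cor:functionValues}. One should also check that the micro-step index $\mu \mapsto \mu+1$ in Lemma \ref{lemma:fmicro} coincides with the one in the statement of the corollary and in Algorithm \ref{alg:ALS}, so that all three inequalities refer to the same pair $v_{k,\mu},\,v_{k,\mu+1}$.
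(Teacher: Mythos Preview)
Your proposal is correct and follows essentially the same route as the paper: the paper's one-line proof cites Corollary~\ref{cor:functionValues} (whose proof already establishes the nonnegativity of each micro-step summand $\dotprod{A_{k,\mu}^{-1}V_{k,\mu}^T r_{k,\mu}}{V_{k,\mu}^T r_{k,\mu}}$, giving (a)) together with Lemma~\ref{lemma:functionValues} (which converts (a) into (b) and (c)). Your write-up simply unpacks these two references explicitly, including the positive-definiteness argument for $A_{k,\mu}$, and is otherwise identical in substance.
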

\begin{proof}
Follows from Lemma \ref{cor:functionValues} and Lemma \ref{lemma:functionValues}.
\end{proof}
\begin{lemma}\label{lemma:gradNull}
Let the sequence $(\ve{p}_k)_{k\in \N} \subset P$ fulfil the
assumption A1. Then we have
\begin{equation*}
    \max_{0\leq \mu\leq L-1}\left\|F_\mu'(p_\mu^k)\right\| \xrightarrow[k \rightarrow \infty]{}0.
\end{equation*}
\end{lemma}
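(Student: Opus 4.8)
The plan is to exploit the telescoping decay formula from Corollary \ref{cor:functionValues} together with the boundedness assumption A1. First I would recall from the proof of Corollary \ref{cor:functionValues} that
\begin{equation*}
f(v_{k+1}) - f(v_k) = -\frac{1}{2\|b\|^2} \sum_{\mu=0}^{L-1} \dotprod{A_{k,\mu}^{-1} V_{k,\mu}^T r_{k,\mu}}{V_{k,\mu}^T r_{k,\mu}},
\end{equation*}
and that by Corollary \ref{cor:functionValues} the left-hand side tends to $0$ as $k\to\infty$. Since each summand on the right is nonnegative (the matrices $A_{k,\mu}^{-1}$ are positive definite), this forces $\dotprod{A_{k,\mu}^{-1} V_{k,\mu}^T r_{k,\mu}}{V_{k,\mu}^T r_{k,\mu}} \to 0$ for every $\mu \in \{0,\dots,L-1\}$ as $k\to\infty$.

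Next I would convert this into a statement about $\|V_{k,\mu}^T r_{k,\mu}\|$. Because the sequence $(\ve{p}_k)_{k\in\N}$ is bounded, the entries of the maps $W_{k,\mu}$ are bounded (they depend multilinearly and continuously on the parameters), hence so are the orthonormal-basis matrices $V_{k,\mu}$ and the matrices $A_{k,\mu} = V_{k,\mu}^T A V_{k,\mu}$. The eigenvalues of $A_{k,\mu}$ are bounded above by $\|A\|$; therefore the eigenvalues of $A_{k,\mu}^{-1}$ are bounded below by $1/\|A\|>0$, which gives
\begin{equation*}
\dotprod{A_{k,\mu}^{-1} V_{k,\mu}^T r_{k,\mu}}{V_{k,\mu}^T r_{k,\mu}} \geq \frac{1}{\|A\|}\,\|V_{k,\mu}^T r_{k,\mu}\|^2.
\end{equation*}
Combining with the previous step yields $\|V_{k,\mu}^T r_{k,\mu}\| \to 0$ for each $\mu$, uniformly over the finitely many values of $\mu$.

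Finally I would relate $V_{k,\mu}^T r_{k,\mu}$ to $F_\mu'(p_\mu^k)$. By Lemma \ref{lemma:p_muNew}(i), $F_\mu'(p_\mu^k) = -W_{k,\mu}^T(b - A W_{k,\mu} p_\mu^k) = -W_{k,\mu}^T r_{k,\mu}$, evaluated at the representation system $\ve{p}_{k,\mu}$. Using the factorisation $V_{k,\mu} = W_{k,\mu}\tilde U_{k,\mu}\tilde D_{k,\mu}^{-1/2}$ from Eq. \eqref{equ:defVmu} one gets $W_{k,\mu}^T = V_{k,\mu}\tilde D_{k,\mu}^{1/2}\tilde U_{k,\mu}^T$ on the relevant subspace (more precisely, $W_{k,\mu}^T r_{k,\mu}$ lies in the range of $\tilde U_{k,\mu}\tilde D_{k,\mu}^{1/2}V_{k,\mu}^T$ up to the kernel directions, on which $F_\mu'$ vanishes by construction, cf. Remark \ref{remark:pOrthKern}), so $\|F_\mu'(p_\mu^k)\| = \|W_{k,\mu}^T r_{k,\mu}\| \le \|\tilde D_{k,\mu}^{1/2}\|\,\|\tilde U_{k,\mu}^T\|\,\|V_{k,\mu}^T r_{k,\mu}\|$, and the factors $\|\tilde D_{k,\mu}^{1/2}\|$, $\|\tilde U_{k,\mu}\|$ are bounded since $(\ve{p}_k)_{k\in\N}$ is bounded. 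Taking the maximum over $\mu \in \{0,\dots,L-1\}$ then gives the claim. The main obstacle I anticipate is the bookkeeping in this last step: one must be careful that $p_\mu^k \perp \Kern(W_{k,\mu})$ (Remark \ref{remark:pOrthKern}) so that no unbounded contribution from the singular directions of $W_{k,\mu}$ enters, and that the bound on $\|\tilde D_{k,\mu}^{1/2}\|$ uses only the largest singular value of $W_{k,\mu}$ (which is controlled by A1), not the smallest — the small singular values are exactly the ones that are allowed to degenerate.
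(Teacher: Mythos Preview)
Your proposal is correct and follows essentially the same route as the paper: both start from the telescoping identity $f(v_k)-f(v_{k+1})=\frac{1}{2\|b\|^2}\sum_\mu\langle A_{k,\mu}^{-1}V_{k,\mu}^Tr_{k,\mu},V_{k,\mu}^Tr_{k,\mu}\rangle$, substitute $V_{k,\mu}=W_{k,\mu}\tilde U_{k,\mu}\tilde D_{k,\mu}^{-1/2}$ together with $F_\mu'(p_\mu^k)=-W_{k,\mu}^Tr_{k,\mu}$, and bound from below using $\lambda_{\min}(A_{k,\mu}^{-1})\ge 1/\lambda_{\max}(A)$ and $\lambda_{\max}(W_{k,\mu}^TW_{k,\mu})\le\gamma$ (which is where A1 enters). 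One small clean-up: your appeal to Remark~\ref{remark:pOrthKern} in the last step is unnecessary --- the vector $W_{k,\mu}^Tr_{k,\mu}$ lies in $\Bild(W_{k,\mu}^T)=(\Kern W_{k,\mu})^\perp$ automatically, so the kernel directions never contribute and only the largest singular value of $W_{k,\mu}$ is needed, exactly as you say.
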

\begin{proof}
According to Lemma \ref{lemma:p_muNew} and Lemma \ref{lemma:fmicro},
we have

\begin{eqnarray*}
  f(v_k)- f(v_{k+1}) &=&\sum_{\mu=1}^L f(v_{k, \mu -1}) - f(v_{k, \, \mu}) = \frac{1}{2 \|b\|^2}
  \sum_{\mu=0}^{L-1} \dotprod{A_{k, \mu}^{-1} V_{k, \mu}^T r_{k,
    \mu}}{V_{k, \mu}^T r_{k, \mu }}\\
    &=&\frac{1}{2 \|b\|^2}
  \sum_{\mu=0}^{L-1} \dotprod{ A_{k, \mu}^{-1} \left(\tilde{U}_{k,
\mu} \tilde{D}^{-\frac{1}{2}}_{k \mu}\right)^TW_{k, \mu}^T r_{k,
    \mu}}{\left(\tilde{U}_{k,
\mu} \tilde{D}^{-\frac{1}{2}}_{k \mu}\right)^TW^T_{k, \mu}r_{k, \mu }}\\
    &=&\frac{1}{2 \|b\|^2}
  \sum_{\mu=0}^{L-1} \dotprod{A_{k, \mu}^{-1} \left(\tilde{U}_{k,
\mu} \tilde{D}^{-\frac{1}{2}}_{k \mu}\right)^T
F'_{\mu}(p_\mu^k)}{\left(\tilde{U}_{k,
\mu} \tilde{D}^{-\frac{1}{2}}_{k \mu}\right)^TF'_{\mu}(p_\mu^k)}\\
&\geq&\frac{1}{2 \|b\|^2}\sum_{\mu=0}^{L-1}
\lambda_{\min}(A^{-1}_{k, \mu}) \lambda_{\min}\left(\tilde{U}_{k,
\mu} \tilde{D}^{-1}_{k \mu} \tilde{U}_{k,
\mu}^T\right)\left\|F_\mu'(p_\mu^k)\right\|^2\\
&\geq&\frac{1}{2 \lambda_{\max}(A)\|b\|^2}\sum_{\mu=0}^{L-1}
\lambda_{\min}\left(\tilde{U}_{k, \mu} \tilde{D}^{-1}_{k \mu}
\tilde{U}_{k, \mu}^T\right)\left\|F_\mu'(p_\mu^k)\right\|^2 ,
\end{eqnarray*}
where $V_{k, \mu}=W_{k, \mu } \tilde{U}_{k, \mu}
\tilde{D}^{-\frac{1}{2}}_{k \mu}$ is from  Eq. (\ref{equ:defVmu}).
In the last estimate, we have used that the Ritz values are bounded
by the smallest and largest eigenvalue of $A$, i.e $\lambda_{\min}
(A) \leq \lambda_{\min} (A_{k, \mu}) \leq \lambda_{\max} (A_{k,
\mu}) \leq \lambda_{\max}(A)$. Since the tensor format $U$ is
continues and the sequence $(\ve{p}_k)_{k \in \N}$ is bounded, it
follows from the theorem of Gershgorin and Cauchy-Schwarz inequality
that there is $\gamma
>0$ such that $\lambda_{\max}\left(W_{k, \mu }^T W_{k, \mu }\right)
\leq \gamma$, recall that $W_{k, \mu }^T W_{k, \mu
}=\tilde{U}_{k,\mu} D_{k,\mu} \tilde{U}^T_{k,\mu}$, see Proposition
\ref{pro:OrthbasisandRepWmu}. Therefore, we have
\begin{equation*}
    f(v_k)- f(v_{k+1}) \geq \frac{1}{2
\lambda_{\max}(A)\gamma
\|b\|^2}\sum_{\mu=0}^{L-1}\left\|F_\mu'(p_\mu^k)\right\|^2 \geq
\frac{1}{2 \lambda_{\max}(A)\gamma \|b\|^2} \max_{0\leq \mu\leq
L-1}\left\|F_\mu'(p_\mu^k)\right\|^2\geq 0.
\end{equation*}

Further, it follows from Corollary \ref{cor:functionValues} that
\begin{equation*}
   0=\lim_{k\rightarrow \infty}  \sqrt{f(v_k)- f(v_{k+1})} = \lim_{k\rightarrow
   \infty} \max_{0\leq \mu\leq L-1}\left\|F_\mu'(p_\mu^k)\right\|.
\end{equation*}
\end{proof}
\begin{theorem}\label{theo:AcuCrit}
Let $(v_k)_{k \in \N}$ be the sequence of represented tensors and
suppose that the sequence of parameter $(\ve{p}_k)_{k \in \N}\subset
P$ from the ALS method fulfils assumption A2. Every accumulation
point of $(v_k)_{k \in \N}$ is a critical point, i.e. $\Ap(v_k)
\subseteq \mathfrak{M}$. Further, we have
\begin{equation*}
    \dist{v_k}  {\mathfrak{M}} \xrightarrow[k \rightarrow \infty]{}0.
\end{equation*}

\end{theorem}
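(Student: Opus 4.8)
The plan is to combine Lemma~\ref{lemma:gradNull} with assumption A2 to show that any accumulation point satisfies the stationarity condition $F'(\ve{p}^*)=0$ for a suitable representation system, and then to upgrade the accumulation-point statement to the distance statement by a standard compactness argument. First I would fix an arbitrary accumulation point $v\in\Ap(v_k)$, so there is a subsequence $(v_k)_{k\in J}$ with $v_k\to v$. Passing to a further subsequence (still called $J$) I may assume that $(\ve{p}_k)_{k\in J}$ converges — this is where boundedness from A2 is used — and, invoking Lemma~\ref{lemma:pMin}, I take the minimal-norm accumulation point $\ve{p}^*=(p_1^*,\dots,p_L^*)\in\Ap_J$ featuring in A2. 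By continuity of $U$ we have $U(\ve{p}^*)=v$, so it suffices to show $F'(\ve{p}^*)=0$, equivalently $F'_\mu(p_\mu^*)=0$ for every $\mu$, since $F'$ decomposes blockwise into the partial gradients $F'_\mu$.

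The key step is to show $F'_\mu(p_\mu^*)=0$ for each $\mu\in\N_L$. By Lemma~\ref{lemma:p_muNew}(i), $F'_\mu(q_\mu)=-W_{\mu}^T(b-AW_\mu q_\mu)$, so along the ALS iterates $F'_\mu$ at the relevant intermediate parameter equals $-W_{k,\mu}^T r_{k,\mu}$ up to the identifications in Algorithm~\ref{alg:ALS}. From Lemma~\ref{lemma:gradNull} (whose hypothesis A1 is implied by A2) we already know $\max_\mu\|F'_\mu(p_\mu^k)\|\to 0$ as $k\to\infty$; the subtlety is that $p_\mu^k$ is evaluated at $\ve{p}_{k,\mu}^{[\mu]}$, i.e.\ with the \emph{already updated} neighbours, so I must pass to the limit in the maps $W_{k,\mu}\to W^*_\mu$ and the minimal-norm solutions $p_\mu^{k+1}=G^+_{k,\mu}W_{k,\mu}^Tb\to G^+_\mu W^*_\mu{}^Tb=p_\mu^*$. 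This convergence is exactly what assumption A2 buys: the rank condition $\Rang(W_{k,\mu})=\Rang(W^*_\mu)$ for $k\ge k_0$ guarantees that the pseudoinverse-type operator $G^+_{k,\mu}$ depends continuously on the parameters near $\ve{p}^*$ (without the rank condition $G^+_{k,\mu}$ can jump, as the de~Silva--Lim example preceding the theorem illustrates). Hence $F'_\mu(p_\mu^*)=\lim_{k\in J}F'_\mu(p_\mu^k)=0$, giving $v=U(\ve{p}^*)\in\mathfrak{M}$.

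For the distance statement I argue by contradiction: if $\dist{v_k}{\mathfrak{M}}\not\to 0$, there is $\eps>0$ and a subsequence with $\dist{v_{k}}{\mathfrak{M}}\ge\eps$ for all $k\in J$. The sequence $(v_k)_{k\in J}$ is bounded (continuous image of the bounded $(\ve{p}_k)$ under $U$), so it has an accumulation point $v$, which lies in $\mathfrak{M}$ by the first part; but then some subsubsequence converges to $v\in\mathfrak{M}$, forcing $\dist{v_k}{\mathfrak{M}}\to 0$ along it, contradicting $\dist{v_k}{\mathfrak{M}}\ge\eps$. Therefore $\dist{v_k}{\mathfrak{M}}\to 0$.

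The main obstacle is the limit-passage in the second paragraph: one has to be careful that the $\mu$-th partial gradient is controlled at the \emph{mixed} parameter $\ve{p}_{k,\mu}$ (new components below $\mu$, old components above), and that along the chosen subsequence all $L$ of these mixed parameter tuples $\ve{p}_{k,0},\dots,\ve{p}_{k,L-1}$ converge to the \emph{same} limit $\ve{p}^*$. This is precisely why Lemma~\ref{lemma:pMin} defines $\Ap_J$ as the union over $\mu$ of the accumulation sets of $(\ve{p}_{k,\mu})_{k\in J}$ and why A2 imposes the rank condition simultaneously for all $\mu$; making this compatibility explicit, and then invoking the continuity of $G^+_{k,\mu}$ under the rank condition, is the technical heart of the proof.
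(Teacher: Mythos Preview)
Your plan follows the paper's approach and correctly identifies the ingredients (Lemma~\ref{lemma:gradNull} for $F'_\mu\to 0$, assumption~A2 for continuity of $G^+_{k,\mu}$, Lemma~\ref{lemma:pMin} for the minimal-norm $\ve{p}^*$) and the main obstacle (all intermediate tuples $\ve{p}_{k,\mu}$ must share the limit $\ve{p}^*$). Your distance argument in the third paragraph is exactly the paper's.

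However, the step you yourself flag as the ``technical heart'' is not actually carried out. You assert $G^+_\mu W^*_\mu{}^Tb=p_\mu^*$, but continuity of $G^+_{k,\mu}$ under the rank condition only gives $p_\mu^{k+1}\to G^+_\mu W^*_\mu{}^Tb=:\tilde p_\mu$; why $\tilde p_\mu$ should coincide with the $\mu$-th component of the minimal-norm accumulation point $\ve{p}^*$ is precisely the issue. The paper closes this gap by an inductive norm comparison (phrased as ``let $\nu=\min M$''): on one hand, Lemma~\ref{lemma:gradNull} in the limit shows that $p_\nu^*$ \emph{also} solves the normal equations $W_\nu^{*T}(AW_\nu^* q-b)=0$, while $\tilde p_\nu$ is by construction the minimum-norm solution, so $\|\tilde p_\nu\|\le\|p_\nu^*\|$; on the other hand, replacing $p_\nu^*$ by $\tilde p_\nu$ in $\ve{p}^*$ produces another element of $\Ap_J$ (it is the limit of the corresponding mixed tuple), so minimality of $\|\ve{p}^*\|$ in $\Ap_J$ forces $\|p_\nu^*\|\le\|\tilde p_\nu\|$. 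Equality of norms, together with $p_\nu^*-\tilde p_\nu\in\Kern W_\nu^*$ and $\tilde p_\nu\perp\Kern W_\nu^*$, then yields $\tilde p_\nu=p_\nu^*$. This double use of minimality---once for the pseudoinverse solution, once for $\ve{p}^*$ among accumulation points---is the mechanism your plan needs to make explicit.
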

\begin{proof}
Let $\bar{v} \in \Ap(v_k)$ be an accumulation point and $(v_k)_{k\in
J\subseteq\N } \subset U(P)$ a subsequence in the range set of the
tensor format $U$ with $v_k\xrightarrow[k \rightarrow
\infty]{}\bar{v}$. Then there exists $(p_k)_{k\in J\subseteq\N }
\subset P$ with $v_k=U(p_k)$ for all $k \in J$. Further, let $\mu
\in \N_L$ and define for all $k \in J$
\begin{equation*}
    \ve{g}_{k, \mu} := (p_1^{k+1}, \dots, p_{\mu}^{k+1}, p_{\mu+1}^{k}, \dots,
    p_L^k) \in P.
\end{equation*}
Let $\mu^* \in \N_L$ and $(g_{k, \mu^*})_{k \in J' \subseteq J}$
with $g_{k, \mu^*}\xrightarrow[k \rightarrow \infty]{}\ve{p}^*:=\argmax_{\ve{p}\in \Ap_J}\|\ve{p}\| \in P$,  see
Lemma \ref{lemma:pMin}. Without loss of generality, let us assume
that $\mu^*=1$. This assumption makes the the notations not more
complicated then necessary. Since $(g_{k, \mu })_{k \in J}$ is
bounded, there exists $\ve{p}^{[\mu]} \in P$ and a corresponding
subsequence $(\ve{g}_{k, \mu})_{k \in J_{\mu} \subseteq J}$ such
that
\begin{eqnarray*}
  \ve{p}_k= \ve{g}_{k,0}&=&(p_1^k, p_2^k, \dots, p_L^k)\xrightarrow[k \rightarrow
  \infty]{}\ve{p}^*=(p_1, p_2, \dots, p_L),\\
  \ve{g}_{k,1}&=&(p_1^{k+1},p_2^k, \dots, p_L^k)\xrightarrow[k \rightarrow
  \infty]{}\ve{p}^{[1]}=(\tilde{p}_1, p_2, \dots, p_L),\\
  \ve{g}_{k,\mu}&=&(p_1^{k+1}, \dots, p_{\mu}^{k+1}, p_{\mu+1}^{k}, \dots,
    p_L^k)\xrightarrow[k \rightarrow
  \infty]{}\ve{p}^{[\mu]}=(\tilde{p}_1, \dots,\tilde{p}_{\mu}, p_{\mu+1},\dots, p_L),\\
  \ve{g}_{k,L}&=&(p_1^{k+1}, \dots, p_{L}^{k+1})\xrightarrow[k \rightarrow
  \infty]{}\ve{p}^{[L]}=(\tilde{p}_1, \dots,\tilde{p}_{L}).
\end{eqnarray*}
From Lemma \ref{lemma:gemLimitPoint} and $U(\ve{p}_k)\xrightarrow[k \rightarrow
\infty]{}\bar{v}$ it follows that
\begin{eqnarray}\label{equ:Gl1Proof}
          \bar{v}&=&\lim_{k \rightarrow \infty}U(\ve{g}_{k, \mu}) = U(\ve{p}^{[\mu]})\quad \mbox{f.a. }\mu \in \N_L,
\end{eqnarray}
where  $k \in J$.
Furthermore, we have
\begin{equation*}
    \ve{p}^*=\ve{p}^{[1]}= \dots = \ve{p}^{[L]}.
\end{equation*}
To show this, assume that
\begin{equation*}
    M:=\left\{ \mu \in \N_L : \ve{p}^{[\mu]}\neq
    \ve{p}^*\right\}\neq \emptyset
\end{equation*}
and define $\nu:=\min M \in \N_L$. From assumption A2 it follows
\begin{equation*}
    p_{\nu}^{k+1} = G_{k, \nu}^+W^T_{k, \nu}b \xrightarrow[k \rightarrow
\infty]{} G^+_\nu W_\nu^Tb=\tilde{p}_\nu.
\end{equation*}
Thus we have in particular that $\tilde{p}_\nu \bot \Kern W_\nu$, see the Definition of $G^+_\nu$ and Proposition \ref{pro:OrthbasisandRepWmu}.
Since $U(\ve{p}^*)=U(\ve{p}^{[\nu]}) \Leftrightarrow W_\nu p_\nu = W_\nu \tilde{p}_\nu$, it follows further that $\delta_\nu := p_\nu - \tilde{p}_\nu \in \Kern W_\nu$ and $\|p_\nu\|^2 = \|\tilde{p}_\nu\|^2 + \|\delta_\nu\|^2$.
Lemma \ref{lemma:p_muNew} and Lemma \ref{lemma:gradNull} show that
\begin{equation*}
    W_{k, \nu}^T(A W_{k, \nu} p_\nu^k - b) \xrightarrow[k \rightarrow
\infty]{} 0.
\end{equation*}
From the definition of $\nu$, we have then
\begin{equation*}
    W_\nu^T(A W_\nu p_\nu -b)=0,
\end{equation*}
note that for $\nu=\min M$
\begin{equation*}
    \ve{g}_{k,\nu}=(p_1^{k+1}, \dots, p_{\nu}^{k+1}, p_{\nu+1}^{k}, \dots,
    p_L^k)\xrightarrow[k \rightarrow
  \infty]{}\ve{p}^{[\nu]}=(p_1, \dots, p_{\nu-1}, \tilde{p}_\nu ,p_{\nu+1},\dots, p_L)
\end{equation*}
holds. Since $\tilde{p}_\nu = G^+_\nu W_\nu^Tb$  and $\ve{p}^* = \argmin_{\ve{p} \in \Ap_J}\|\ve{p}\|$, it follows $\|\tilde{p}_\nu\| = \|p_\nu\|$.
Hence, we have $\tilde{p}_\nu=p_\nu$, because $\|p_\nu\|^2 = \|\tilde{p}_\nu\|^2 + \|\delta_\nu\|^2$ implies then $\delta_\nu=0$. But $\tilde{p}_\nu=p_\nu$ contradicts the definition of $\nu=\min M$. Consequently, we have
\begin{equation*}
    \ve{p}^*=\ve{p}^{[1]}= \dots = \ve{p}^{[L]}.
\end{equation*}
From Eq. (\ref{equ:Gl1Proof}) and the definition of $p_\mu^{k+1}$ it follows then
\begin{equation*}
    \bar{v}=U(\ve{p}^*)
\end{equation*}
and
\begin{equation*}
    0= \lim_{k \rightarrow \infty}F'_\mu(\ve{g}_{k, \mu}) = F'_\mu(\ve{p}^*) \quad \mbox{ for all } \mu \in \N_L,
\end{equation*}
i.e. $\Ap(v_k) \subseteq \mathfrak{M}$. Now, let $\delta_k=\inf_{v \in \mathfrak{M}}\|v_k-v\|$ and suppose that there exists a subsequence $(\delta_k)_{k \in J \subseteq \N}$ with
$\lim_{k \rightarrow \infty} \delta_k = \delta \in \R_+$. Then $(v_k)_{k \in J}$ has a convergent subsequence. Since this subsequence must have its limit point in $\mathfrak{M}$,
it follows that $\delta=0$. Which proves $\dist{v_k}  {\mathfrak{M}} \xrightarrow[k \rightarrow \infty]{}0$ by contradiction.

\end{proof}
\begin{lemma}\label{lemma:dist}
Let $(v_k)_{k\in \N} \subset \Vp$ be the sequence of represented
tensors from the ALS method. It holds
\begin{equation*}
    \|v_{k+1} - v_{k}\|_A\xrightarrow[k \rightarrow \infty]{}0.
\end{equation*}
\end{lemma}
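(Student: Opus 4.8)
The idea is to telescope the $A$-norm increments over one ALS sweep and identify them with the (now known to be convergent) decrease of $f$. Recall from Algorithm \ref{alg:ALS} that within the $k$-th sweep $v_{k,\mu+1}=v_{k,\mu}+\Delta_{k,\mu}$ with $\Delta_{k,\mu}=V_{k,\mu}A_{k,\mu}^{-1}V_{k,\mu}^{T}r_{k,\mu}$, and that $v_{k,0}=v_k$, $v_{k,L}=v_{k+1}$. First I would compute the $A$-norm of a single micro-step increment:
\begin{equation*}
\|v_{k,\mu+1}-v_{k,\mu}\|_A^2=\dotprod{A\Delta_{k,\mu}}{\Delta_{k,\mu}}
=\dotprod{A_{k,\mu}^{-1}V_{k,\mu}^{T}r_{k,\mu}}{V_{k,\mu}^{T}r_{k,\mu}},
\end{equation*}
where the second equality uses $V_{k,\mu}^{T}AV_{k,\mu}=A_{k,\mu}$ (Eq.~\eqref{equ:defAkmu}) and the fact that $A_{k,\mu}$ is invertible because the columns of $V_{k,\mu}$ are orthonormal (Proposition \ref{pro:OrthbasisandRepWmu}(iv)). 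Comparing with Lemma \ref{lemma:fmicro} this is precisely $2\|b\|^2\big(f(v_{k,\mu})-f(v_{k,\mu+1})\big)$, so
\begin{equation*}
\|v_{k,\mu+1}-v_{k,\mu}\|_A^2 = 2\|b\|^2\big(f(v_{k,\mu})-f(v_{k,\mu+1})\big)\ks \mu\in\N_L\cup\{0\}.
\end{equation*}

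Next I would sum this identity over $\mu=0,\dots,L-1$; the right-hand side telescopes to $2\|b\|^2\big(f(v_{k,0})-f(v_{k,L})\big)=2\|b\|^2\big(f(v_k)-f(v_{k+1})\big)$. Hence
\begin{equation*}
\sum_{\mu=0}^{L-1}\|v_{k,\mu+1}-v_{k,\mu}\|_A^2 = 2\|b\|^2\big(f(v_k)-f(v_{k+1})\big).
\end{equation*}
By Corollary \ref{cor:functionValues} the sequence $(f(v_k))_{k\in\N}$ converges, so $f(v_k)-f(v_{k+1})\to 0$, and therefore the left-hand side tends to $0$ as $k\to\infty$.

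Finally, apply the triangle inequality (or Cauchy--Schwarz) for the $A$-norm along the micro-steps of one sweep:
\begin{equation*}
\|v_{k+1}-v_k\|_A=\Big\|\sum_{\mu=0}^{L-1}(v_{k,\mu+1}-v_{k,\mu})\Big\|_A
\le \sqrt{L}\,\Big(\sum_{\mu=0}^{L-1}\|v_{k,\mu+1}-v_{k,\mu}\|_A^2\Big)^{1/2}
=\sqrt{2L}\,\|b\|\,\big(f(v_k)-f(v_{k+1})\big)^{1/2}\xrightarrow[k\to\infty]{}0,
\end{equation*}
which is the claim. This argument is essentially a chain of elementary identities plus the convergence of $(f(v_k))$; the only point requiring a little care is the algebraic simplification in the first displayed line, in particular justifying invertibility of $A_{k,\mu}$ and the cancellation $V_{k,\mu}^{T}AV_{k,\mu}A_{k,\mu}^{-1}=\mathrm{Id}$ on the range. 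No boundedness or rank assumption (A1/A2) is needed here, only Lemma \ref{lemma:fmicro} and Corollary \ref{cor:functionValues}.
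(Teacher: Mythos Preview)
Your proof is correct and follows essentially the same route as the paper: compute $\|v_{k,\mu+1}-v_{k,\mu}\|_A^2$ explicitly, identify it with the decrement $2\|b\|^2\big(f(v_{k,\mu})-f(v_{k,\mu+1})\big)$ via Lemma~\ref{lemma:fmicro}, telescope over the sweep, and bound $\|v_{k+1}-v_k\|_A$ by $\sqrt{L}$ times the $\ell^2$-sum of micro-step norms before invoking Corollary~\ref{cor:functionValues}. The only cosmetic difference is that the paper applies the triangle/Cauchy--Schwarz step first and then inserts the micro-step identity, whereas you reverse the order; the resulting bound $\|v_{k+1}-v_k\|_A^2\le 2L\|b\|^2\big(f(v_k)-f(v_{k+1})\big)$ is identical.
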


\begin{proof}
 Let $k \in \N$. We have
\begin{eqnarray}\label{eq:profvkNull}
  \|v_{k+1} - v_{k}\|_A^2 &=& \left\| \sum_{\mu=1}^L v_{k,\mu} - v_{k,
  \mu-1}\right\|^2_A \leq \left( \sum_{\mu=1}^L \|v_{k,\mu} - v_{k,
  \mu-1}\|_A \right)^2 \leq L \sum_{\mu=0}^{L-1} \left\|v_{k,\mu+1} - v_{k,
  \mu}\right\|^2_A.
\end{eqnarray}
Since $v_{k, \mu+1} - v_{k, \mu} =
V_{k,\mu}A_{k,\mu}^{-1}V_{k,\mu}^T r_{k,\mu}$, it follows further
\begin{eqnarray*}
   \left\|v_{k,\mu+1} - v_{k,
  \mu}\right\|^2_A &=& \left\| V_{k,\mu}A_{k,\mu}^{-1}V_{k,\mu}^T r_{k,\mu} \right\|^2_A = \dotprod{A V_{k,\mu}A_{k,\mu}^{-1}V_{k,\mu}^T r_{k,\mu}}{V_{k,\mu}A_{k,\mu}^{-1}V_{k,\mu}^T r_{k,\mu}}\\
  &=& \dotprod{A_{k,\mu}^{-1}V_{k,\mu}^T r_{k,\mu}}{V^T_{k,\mu}AV_{k,\mu}A_{k,\mu}^{-1}V_{k,\mu}^T r_{k,\mu}}=\dotprod{A_{k,\mu}^{-1}V_{k,\mu}^T r_{k,\mu}}{V_{k,\mu}^T r_{k,\mu}}\\
  &=&\dotprod{V_{k,\mu}A_{k,\mu}^{-1}V_{k,\mu}^T r_{k,\mu}}{r_{k,\mu}}.
\end{eqnarray*}
Combining this with Eq. (\ref{eq:fmicro}) and (\ref{eq:profvkNull})
gives
\begin{eqnarray*}
  \|v_{k+1} - v_{k}\|_A^2 &\leq& 2 L \|b\|^2 \sum_{\mu=0}^{L-1} \left(f(v_{k, \mu}) - f(v_{k, \mu+1})\right) = 2 L \|b\|^2 \left(f(v_{k})- f(v_{k+1})\right).
\end{eqnarray*}
From Corollary \ref{cor:functionValues} it follows $\left(f(v_{k}) -
f(v_{k})\right)\xrightarrow[k \rightarrow \infty]{} 0$. Therefore,
we have

\begin{equation*}
    \|v_{k+1} - v_{k}\|_A\xrightarrow[k \rightarrow \infty]{}0.
\end{equation*}
\end{proof}
\begin{lemma}\label{lemma:gemLimitPoint}
Let $(v_k)_{k\in \N} \subset \Vp$ be the sequence of tensors from
the ALS method and $\bar{v} \in\Vp$ with $\lim_{k \rightarrow
\infty} v_k = \bar{v}$.
Further, let $\mu \in \N_L$ and $(v_{k, \mu})_{k\in \N} \subset \Vp$
as defined in Algorithm \ref{alg:ALS}. We have
\begin{equation*}
    \lim_{k \rightarrow \infty} v_{k, \mu} = \bar{v} \quad
    \mbox{for all } \mu \in \N_L.
\end{equation*}
\end{lemma}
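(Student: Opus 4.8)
The plan is to deduce the statement from the fact that, along the ALS iteration, every single micro-step shrinks to zero. First I would recall from the proof of Lemma~\ref{lemma:dist} the exact expression for a micro-step increment: for every $k \in \N$ and $\mu \in \N_L$,
\[
  \|v_{k,\mu} - v_{k,\mu-1}\|_A^2 = \dotprod{V_{k,\mu-1} A_{k,\mu-1}^{-1} V_{k,\mu-1}^T r_{k,\mu-1}}{r_{k,\mu-1}},
\]
and combining this with Lemma~\ref{lemma:fmicro} (reindexed $\mu \mapsto \mu-1$) gives
\[
  \|v_{k,\mu} - v_{k,\mu-1}\|_A^2 = 2\|b\|^2\,\bigl(f(v_{k,\mu-1}) - f(v_{k,\mu})\bigr).
\]
Summing over $\mu = 1, \dots, L$ telescopes the right-hand side to $2\|b\|^2\bigl(f(v_k) - f(v_{k+1})\bigr)$, which tends to $0$ by Corollary~\ref{cor:functionValues}. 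Since each summand is nonnegative by Corollary~\ref{cor:equf}(a), each summand tends to $0$ separately; hence $\|v_{k,\mu} - v_{k,\mu-1}\|_A \xrightarrow[k \rightarrow \infty]{} 0$ for every fixed $\mu \in \N_L$.

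Next I would use the telescoping decomposition, valid for each fixed $\mu \in \N_L$,
\[
  v_{k,\mu} = v_{k,0} + \sum_{j=1}^{\mu}\bigl(v_{k,j} - v_{k,j-1}\bigr) = v_k + \sum_{j=1}^{\mu}\bigl(v_{k,j} - v_{k,j-1}\bigr),
\]
and let $k \to \infty$. The term $v_k$ converges to $\bar v$ by hypothesis, while each of the (finitely many) summands converges to $0$ in the $A$-norm by the previous step. Because $A$ is positive definite and $\Vp$ is finite dimensional, $\|\cdot\|_A$ is equivalent to the Euclidean norm, so convergence in $\|\cdot\|_A$ coincides with convergence in $\|\cdot\|$. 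Therefore $v_{k,\mu} \xrightarrow[k \rightarrow \infty]{} \bar v$, which is the assertion.

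I do not expect a genuine obstacle here. The only point deserving attention is that passing from ``the telescoped sum of decrements vanishes'' to ``each decrement vanishes'' relies on the monotonicity of the function values within a single sweep, i.e.\ Corollary~\ref{cor:equf}(a) (equivalently, the right-hand side of Lemma~\ref{lemma:fmicro} is nonpositive because $A_{k,\mu}^{-1}$ is positive definite); and that $\mu$ ranges over the fixed finite set $\N_L$, so no uniformity in $\mu$ is needed when taking the limit in the last display.
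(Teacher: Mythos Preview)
Your proof is correct and rests on the same key fact as the paper's --- that each micro-step increment $\|v_{k,\mu}-v_{k,\mu-1}\|_A$ vanishes, which is implicit in the proof of Lemma~\ref{lemma:dist}. The paper packages the conclusion as a minimal-counterexample contradiction (take $\mu^*=\min\{\mu:\lim v_{k,\mu}\neq\bar v\}$ and apply the triangle inequality), whereas you telescope directly; this is a cosmetic difference, not a substantive one.
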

\begin{proof}
Define $v_{k, 0}:=v_k$ (like in Algorithmus \ref{alg:ALS}) and
assume that

\begin{equation*}
    M:=\left\{ \mu \in \N_L : \lim_{k \rightarrow \infty} v_{k, \mu} \neq
    \bar{v}\right\}\neq\emptyset.
\end{equation*}
Furthermore, set $\mu^*:=\min M \in \N_L$.

From Lemma \ref{lemma:dist} it follows
\begin{equation*}
    \|v_{k,\mu^*} - \bar{v}\|_A \leq \|v_{k,\mu^*} - v_{\mu^*-1, k}\|_A + \|v_{k, \mu^*-1} - \bar{v}\|_A\xrightarrow[k \rightarrow
    \infty]{}0.
\end{equation*}
But this contradicts the definition of $\mu^*$.
\end{proof}

In the following, the dimension of the tensor space
$\Vp=\Ten_{\mu=1}^d \R^{m_\mu}$ is denoted by $N \in \N$, i.e.
$N=\prod_{\mu=1}^d m_\mu$. The statement of Lemma
\ref{lemma:recursionTan} delivers an explicit recursion formula for
the tangent of the angle between iteration points and an arbitrary
tensor. This result is important for the rate of convergence of the
ALS algorithm.
\begin{lemma}\label{lemma:ALSrecursion}
Let $\mu, \nu \in \N_L$, $\nu\neq \mu$, and $\ve{p}=(p_1, \dots,
p_\nu, \dots, p_\mu, \dots, p_L) \in P$. There exists a multilinear
map $M_{\mu, \nu} : P_1 \times \dots \times P_{\nu-1} \times
P_{\nu+1} \times \dots  \times P_{\mu-1} \times P_{\mu+1} \times
\dots \times P_L \times \Vp \rightarrow L(P_{\nu}, P_\mu)$ such that
\begin{equation}\label{eq:recursion}
    W^T_{\mu}(p_1, \dots, \mathbf{g_\nu}, \dots, p_{\mu-1}, p_{\mu+1}, \dots, p_L)b =M_{\mu,
    \nu}(p_1,
\dots, p_{\nu-1}, p_{\nu+1}, \dots, p_{\mu-1}, p_{\mu+1}, \dots,
p_L, b) \mathbf{g_\nu}
\end{equation}
for all $\mathbf{g_\nu} \in P_\nu$. Moreover, we have
\begin{equation*}
    M_{\mu,
    \nu}(p_1,
\dots, p_{\nu-1}, p_{\nu+1}, \dots, p_{\mu-1}, p_{\mu+1}, \dots,
p_L, b) = M^T_{\nu,
    \mu}(p_1,
\dots, p_{\nu-1}, p_{\nu+1}, \dots, p_{\mu-1}, p_{\mu+1}, \dots,
p_L, b).
\end{equation*}
\end{lemma}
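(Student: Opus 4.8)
The plan is to extract $M_{\mu,\nu}$ directly from the multilinearity of $W_\mu$ recorded in Proposition~\ref{pro:OrthbasisandRepWmu}(v), and to obtain the symmetry relation by passing to the adjoint. Without loss of generality I will assume $\nu<\mu$, so that the $\nu$-th slot is one of the frozen arguments in $\ve{p}^{[\mu]}$; the case $\nu>\mu$ needs only an obvious relabelling of indices and does not change anything.

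First I would fix $p_\lambda$ for all $\lambda\notin\{\nu,\mu\}$ together with $b\in\Vp$. By Proposition~\ref{pro:OrthbasisandRepWmu}(v) the assignment $g_\nu\mapsto W_\mu(p_1,\dots,g_\nu,\dots,p_{\mu-1},p_{\mu+1},\dots,p_L)\in L(P_\mu,\Vp)$ is linear in $g_\nu$; composing with transposition (linear) and then with evaluation at $b$ (linear), the map $g_\nu\mapsto W_\mu(\dots,g_\nu,\dots)^T b\in P_\mu$ is linear. I define
$M_{\mu,\nu}(p_1,\dots,p_{\nu-1},p_{\nu+1},\dots,p_{\mu-1},p_{\mu+1},\dots,p_L,b)\in L(P_\nu,P_\mu)$
to be precisely this linear map; then (\ref{eq:recursion}) holds by construction. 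Multilinearity of $M_{\mu,\nu}$ in the remaining arguments $(p_\lambda)_{\lambda\neq\nu,\mu}$ and linearity in $b$ follow by the same reasoning: $W_\mu$ depends multilinearly on $\ve{p}^{[\mu]}$ (Proposition~\ref{pro:OrthbasisandRepWmu}(v) again), transposition is linear, and the pairing $(\phi,v)\mapsto\phi^T v$ is bilinear.

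For the symmetry $M_{\mu,\nu}=M_{\nu,\mu}^T$ I would fix the frozen parameters $(p_\lambda)_{\lambda\neq\nu,\mu}$ and $b$ and take arbitrary $g_\nu\in P_\nu$, $g_\mu\in P_\mu$. Writing $W_\mu$ for the map carrying $g_\nu$ in its $\nu$-th slot and $W_\nu$ for the map carrying $g_\mu$ in its $\mu$-th slot, the definition of the adjoint (with respect to the inner products of $\Vp$, $P_\mu$, $P_\nu$) gives
\begin{equation*}
  \dotprod{M_{\mu,\nu}(\dots,b)\,g_\nu}{g_\mu}_{P_\mu} = \dotprod{W_\mu^T b}{g_\mu}_{P_\mu} = \dotprod{b}{W_\mu g_\mu}.
\end{equation*}
Now $W_\mu g_\mu = U(p_1,\dots,g_\nu,\dots,g_\mu,\dots,p_L) = W_\nu g_\nu$, since both expressions are the same value of the multilinear map $U$ (with $g_\nu$ in slot $\nu$ and $g_\mu$ in slot $\mu$). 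Hence the right-hand side equals $\dotprod{b}{W_\nu g_\nu} = \dotprod{W_\nu^T b}{g_\nu}_{P_\nu} = \dotprod{M_{\nu,\mu}(\dots,b)\,g_\mu}{g_\nu}_{P_\nu} = \dotprod{M_{\nu,\mu}(\dots,b)^T g_\nu}{g_\mu}_{P_\mu}$. As $g_\nu,g_\mu$ are arbitrary, $M_{\mu,\nu}(\dots,b)=M_{\nu,\mu}(\dots,b)^T$, which is the claimed identity.

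I expect no genuine obstacle here: the whole statement is a consequence of the multilinearity of $U$ and elementary properties of the adjoint. The only point requiring care is bookkeeping — tracking which inner product each adjoint is taken with respect to, and performing the relabelling in the case $\nu>\mu$ so that the slot $\nu$ again sits among the frozen arguments of $W_\mu$.
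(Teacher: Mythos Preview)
Your proposal is correct and follows the same approach as the paper: the paper's proof is the one-line remark that the claim ``follows from Proposition~\ref{pro:OrthbasisandRepWmu}(v) and definition of $W^T_{\mu}(\dots)b$,'' and what you have written is exactly the unpacking of that sentence, including the symmetry via $\langle b, W_\mu g_\mu\rangle = \langle b, U(\dots,g_\nu,\dots,g_\mu,\dots)\rangle = \langle b, W_\nu g_\nu\rangle$. Nothing further is needed.
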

\begin{proof}
Follows form Proposition \ref{pro:OrthbasisandRepWmu} (v) and
definition of $W^T_{\mu}(p_1, \dots, \mathbf{g_\nu}, \dots,
p_{\mu-1}, p_{\mu+1}, \dots, p_L)b$.
\end{proof}

\begin{corollary}\label{cor:ALSrecursion}
Let $\mu \in \N_L$, $k \geq 2$, and $\ve{p}_{k, \mu}= (p^{k+1}_1,
\dots, p^{k+1}_{\mu-1}, p_\mu^k,p^{k}_{\mu+1}, \dots, p_L^k)$ form
Algorithm \ref{alg:ALS}. There exists a multilinear map $M_{\mu} :
P_1 \times \dots \times P_{\mu-2} \times P_{\mu+1} \times \dots
\times P_L \times \Vp \rightarrow L(P_{\mu-1}, P_\mu)$ such that

\begin{eqnarray}\label{eq:recursion1}
    p^{k+1}_\mu&=&G^+_{k, \, \mu} M_\mu(p_1^{k+1},
\dots, p_{\mu-2}^{k+1}, p_{\mu+1}^k, \dots, p_L^k, b)
p_{\mu-1}^{k+1},\\
    p_{\mu-1}^{k+1}&=&G^+_{k, \, \mu-1} M^T_\mu(p_1^{k+1},
\dots, p_{\mu-2}^{k+1}, p_{\mu+1}^k, \dots, p_L^k, b) p_{\mu}^{k},\\
\nonumber \mbox{ i.e. } p^{k+1}_\mu&=&G^+_{k, \, \mu}
M_\mu(p_1^{k+1}, \dots, p_{\mu-2}^{k+1}, p_{\mu+1}^k, \dots, p_L^k,
b) \, G^+_{k, \, \mu-1} M^T_\mu(p_1^{k+1}, \dots, p_{\mu-2}^{k+1},
p_{\mu+1}^k, \dots, p_L^k, b) p_{\mu}^{k},
\end{eqnarray}
where $G^+_{k, \, \mu-1}$ and $G^+_{k, \, \mu}$ are defined in
Algorithm \ref{alg:ALS}.
\end{corollary}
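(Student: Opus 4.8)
The plan is to specialise Lemma~\ref{lemma:ALSrecursion} to the two parameter systems generated by consecutive ALS micro-steps and then to chain the resulting identities. Recall from Eq.~\eqref{eq:defPmu} that the $\mu$-th micro-step at iteration $k$ sets
\[
  p_\mu^{k+1}=G^+_{k,\mu}\,W^T_\mu\bigl(p_1^{k+1},\dots,p_{\mu-1}^{k+1},p_{\mu+1}^{k},\dots,p_L^{k}\bigr)\,b,
\]
while the preceding $(\mu-1)$-th micro-step sets
\[
  p_{\mu-1}^{k+1}=G^+_{k,\mu-1}\,W^T_{\mu-1}\bigl(p_1^{k+1},\dots,p_{\mu-2}^{k+1},p_\mu^{k},p_{\mu+1}^{k},\dots,p_L^{k}\bigr)\,b .
\]
Note that in the second line the $\mu$-th argument is still the \emph{old} component $p_\mu^k$; keeping this straight is precisely why the final, combined formula expresses $p_\mu^{k+1}$ through $p_\mu^{k}$.

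First I would invoke Lemma~\ref{lemma:ALSrecursion} with the index pair $(\mu,\mu-1)$, which produces a multilinear map $M_\mu:=M_{\mu,\mu-1}$ on $P_1\times\dots\times P_{\mu-2}\times P_{\mu+1}\times\dots\times P_L\times\Vp$ with values in $L(P_{\mu-1},P_\mu)$ and the property $W^T_\mu(\dots,\mathbf{g_{\mu-1}},\dots)\,b=M_\mu(\dots)\,\mathbf{g_{\mu-1}}$ for every $\mathbf{g_{\mu-1}}\in P_{\mu-1}$. Substituting the frozen ALS values $p_1^{k+1},\dots,p_{\mu-2}^{k+1},p_{\mu+1}^{k},\dots,p_L^{k}$ and $\mathbf{g_{\mu-1}}=p_{\mu-1}^{k+1}$ into the micro-step formula for $p_\mu^{k+1}$ yields the first line of \eqref{eq:recursion1}.

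Next I would apply Lemma~\ref{lemma:ALSrecursion} to the pair $\{\mu-1,\mu\}$ from the other side, together with its transpose identity, to obtain $W^T_{\mu-1}(\dots,\mathbf{g_\mu},\dots)\,b=M^T_\mu(\dots)\,\mathbf{g_\mu}$ on the same product space; substituting the relevant ALS values and $\mathbf{g_\mu}=p_\mu^{k}$ into the micro-step formula for $p_{\mu-1}^{k+1}$ gives the second line of \eqref{eq:recursion1}, and composing the two identities gives the third. The hypothesis $k\ge 2$ only serves to guarantee that all of $p_\cdot^{k+1}$ and $p_\cdot^{k}$ appearing here are genuine iterates of Algorithm~\ref{alg:ALS}.

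I do not anticipate a genuine obstacle: all the substance lives in Lemma~\ref{lemma:ALSrecursion} (itself a consequence of the multilinearity recorded in Proposition~\ref{pro:OrthbasisandRepWmu}(v)) and in the definition \eqref{eq:defPmu}. The only point demanding care is the index bookkeeping — verifying that the list of frozen slots of $M_{\mu,\mu-1}$ and of $M_{\mu-1,\mu}$ is literally the same, namely $P_1,\dots,P_{\mu-2},P_{\mu+1},\dots,P_L,\Vp$, so that the transpose relation $M_{\mu-1,\mu}=M^T_\mu$ from Lemma~\ref{lemma:ALSrecursion} may be invoked with identical arguments on both sides.
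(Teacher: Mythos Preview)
Your proposal is correct and follows exactly the route the paper takes: its proof of the corollary is the single line ``Follows from Eq.~\eqref{eq:defPmu} and Lemma~\ref{lemma:ALSrecursion},'' and your argument simply unpacks that sentence by setting $M_\mu:=M_{\mu,\mu-1}$, using the transpose identity $M_{\mu-1,\mu}=M_{\mu,\mu-1}^T$, and doing the index bookkeeping you describe.
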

\begin{proof}
Follows form Eq. (\ref{eq:defPmu}) and Lemma
\ref{lemma:ALSrecursion}.
\end{proof}
The following example shows a concrete realisation of the matrix
$M_{\mu}$ for the tensor rank-one approximation problem.
\begin{example}\label{exa:rank1}
The approximation of $b\in \Vp$ by a rank one tensor is considered.
Let $v_k = p_1^k \ten p_2^k \ten \dots \ten p_d^k$ and
\begin{equation*}
    b= \sum_{i_1=1}^{t_1} \dots \sum_{i_d=1}^{t_d} \beta_{(i_1, \dots, i_d)} \Ten_{\mu=1}^d b_{\mu,
    i_\mu},
\end{equation*}
i.e. the tensor $b$ is given in the Tucker decomposition. From Eq.
(\ref{eq:defPmu}) it follows
\begin{eqnarray*}
  p_1^{k+1} &=& \frac{1}{\prod_{\mu=2}^d \left\|p_\mu^k\right\|^2}\sum_{i_1=1}^{t_1} \dots \sum_{i_d=1}^{t_d} \beta_{(i_1, \dots,
  i_d)} \prod_{\mu=2}^d \dotprod{b_{\mu, i_\mu}}{p_\mu^k} b_{1,
  i_1}\\
  &=& \frac{1}{\prod_{\mu=2}^{d-1} \left\|p_\mu^k\right\| \|p_d^k\|^2} \left[\sum_{i_1=1}^{t_1} \sum_{i_d=1}^{t_d}  b_{1, i_1} \sum_{i_2=1}^{t_2}\dots \sum_{i_{d-1}=1}^{t_{d-1}} \beta_{(i_1, \dots,
  i_d)} \prod_{\mu=2}^{d-1} \frac{\dotprod{b_{\mu, i_\mu}}{p_\mu^k}}{\|p_\mu^k\|} b^T_{d,
  i_d}\right]p_d^k\\
  &=&\frac{1}{\prod_{\mu=2}^{d-1} \left\|p_\mu^k\right\| \|p_d^k\|^2}\underbrace{B_1
  \Gamma_{1,k} B_d^T}_{M_{1}(p_2^k, \dots, p_{d-1}^k)=} p_d^k,
\end{eqnarray*}
where $B_\mu=\left( b_{\mu, i_\mu}\,:\, 1\leq i_\mu \leq
t_\mu\right) \in \R^{n_\mu \times t_\mu}$,
$B_\mu^TB_\mu=\Id_{\R^{t_\mu}}$, and the entries of the matrix
$\Gamma_{1,k} \in \R^{t_1 \times t_d}$ are defined by
\begin{equation*}
    [\Gamma_{1, k}]_{i_1, i_d} = \sum_{i_2=1}^{t_2}
\dots \sum_{i_{d-1}=1}^{t_{d-1}} \beta_{(i_1, \dots,
  i_d)} \prod_{\mu=2}^{d-1} \frac{\dotprod{b_{\mu,
i_\mu}}{p_\mu^k}}{\|p_\mu^k\|} \quad \left( 1\leq i_1 \leq t_1, \, 1
\leq i_d \leq t_d\right).
\end{equation*}
Note that $\Gamma_{1,k}$ is a diagonal matrix if the coefficient
tensor $\beta \in \Ten_{\mu=1}^d \R^{t_\mu}$ is super- diagonal, see
the example in \cite{AramMikeALS1}.
For $p_d^k$ it follows further
\begin{eqnarray*}
  p_d^{k} &=& \frac{1}{\prod_{\mu=1}^{d-1} \left\|p_\mu^k\right\|^2}\sum_{i_1=1}^{t_1} \dots \sum_{i_d=1}^{t_d} \beta_{(i_1, \dots,
  i_d)} \prod_{\mu=1}^{d-1} \dotprod{b_{\mu, i_\mu}}{p_\mu^k} b_{d,
  i_d}
  =\frac{1}{\left\|p_1^k\right\|^2\prod_{\mu=2}^{d-1} \left\|p_\mu^k\right\|}B_d
  \Gamma^T_{1,k} B_1^T p_1^k
\end{eqnarray*}
and finally
\begin{equation*}
    p_1^{k+1}= \frac{1}{\prod_{\mu=1}^d \left\|p_\mu^k\right\|^2}\, B_1
  \Gamma_{1,k}\Gamma^T_{1,k} B_1^T \,p_1^k.
\end{equation*}
\end{example}

\begin{lemma}\label{lemma:RecursionV}
Let $\mu \in \N_L$, $(\ve{p}_{k, \mu})_{k \in \N} \subset P$, and
$\left(v_{k, \mu}\right)_{k \in \N} \subset \Vp$ be the sequences
from Algorithm \ref{alg:ALS}. Furthermore, define
\begin{eqnarray*}
  M_{k, \mu} &:=& M_\mu(p_1^{k+1},
\dots, p_{\mu-2}^{k+1}, p_{\mu+1}^k, \dots, p_L^k, b),\\
  H_{k, \mu-1} &:=& W^T_{k, \mu-1}W_{k, \mu-1},\\
  N_{k, \mu}&:=& W_{k, \mu } G^+_{k, \mu} M_{k, \mu} H^+_{k,\mu-1} W^T_{k, \mu-1},
\end{eqnarray*}
where we have used the notations from Algorithm \ref{alg:ALS}. A
micro-step of the ALS method is described by the following recursion
formula:
\begin{equation}\label{eq:recursionV}
    v_{k,\mu+1} = N_{k, \mu} v_{k,\mu} \quad \mbox{ for all }
    k\geq2,
\end{equation}
i.e.
\begin{equation*}
    U(p_1^{k+1}, \dots, p_{\mu-1}^{k+1}, p_{\mu}^{k+1}, p_{\mu+1}^{k}, \dots, p_{L}^{k}) = N_{k, \mu} \, U(p_1^{k+1}, \dots, p_{\mu-1}^{k+1}, p_{\mu}^{k}, p_{\mu+1}^{k}, \dots, p_{L}^{k}).
\end{equation*}
\end{lemma}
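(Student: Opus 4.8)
The plan is to obtain \eqref{eq:recursionV} by composing three facts that are all available at this point: that a micro-step is the action of the linear map $W_{k,\mu}$, a formula recovering $p_{\mu-1}^{k+1}$ from $v_{k,\mu}$ via $H^+_{k,\mu-1}$, and the one-step recursion for $p_\mu^{k+1}$ from Corollary \ref{cor:ALSrecursion}.

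First I would record the two identities that follow straight from Definition \ref{def:W_mu} and the micro-step in Algorithm \ref{alg:ALS}:
\begin{equation*}
  v_{k,\mu+1}=W_{k,\mu}\,p_\mu^{k+1},\qquad v_{k,\mu}=W_{k,\mu-1}\,p_{\mu-1}^{k+1}.
\end{equation*}
The first holds because $W_{k,\mu}$ is precisely $U$ with every slot except the $\mu$-th frozen at the entries of $\ve{p}_{k,\mu+1}$; the second holds likewise with the $(\mu-1)$-st slot left free and the remaining ones frozen at the entries of $\ve{p}_{k,\mu}$ (indices read cyclically, so that for $\mu=1$ the role of $p_{\mu-1}^{k+1}$ is played by $p_L^k$ and that of $W_{k,\mu-1}$ by the map $W_{k-1,L}$ used at the previous micro-step; this is exactly why $k\ge 2$ is assumed, namely so that this entry is itself the output of a genuine ALS micro-step). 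Next I would invert the second identity: applying $W^T_{k,\mu-1}$ gives $W^T_{k,\mu-1}v_{k,\mu}=H_{k,\mu-1}\,p_{\mu-1}^{k+1}$, and by Remark \ref{remark:pOrthKern} the vector $p_{\mu-1}^{k+1}$ is the minimum-norm solution of its normal equation, hence $p_{\mu-1}^{k+1}\perp\Kern(W_{k,\mu-1})$. Since $\Kern(W_{k,\mu-1})=\Kern(W^T_{k,\mu-1}W_{k,\mu-1})=\Kern(H_{k,\mu-1})$ and $H_{k,\mu-1}$ is symmetric, $p_{\mu-1}^{k+1}$ lies in $\Range(H_{k,\mu-1})$ and is therefore fixed by the orthogonal projector $H^+_{k,\mu-1}H_{k,\mu-1}$, so that
\begin{equation*}
  p_{\mu-1}^{k+1}=H^+_{k,\mu-1}W^T_{k,\mu-1}\,v_{k,\mu}.
\end{equation*}

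It then remains to substitute. Corollary \ref{cor:ALSrecursion} gives $p_\mu^{k+1}=G^+_{k,\mu}M_{k,\mu}\,p_{\mu-1}^{k+1}$ with $M_{k,\mu}$ as in the statement; inserting the previous display and applying $W_{k,\mu}$ yields $v_{k,\mu+1}=W_{k,\mu}G^+_{k,\mu}M_{k,\mu}H^+_{k,\mu-1}W^T_{k,\mu-1}\,v_{k,\mu}=N_{k,\mu}v_{k,\mu}$, and the displayed reformulation in terms of $U$ is just $v_{k,\mu+1}=U(\ve{p}_{k,\mu+1})$ and $v_{k,\mu}=U(\ve{p}_{k,\mu})$ written out. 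I expect the only genuinely delicate point to be the middle step, namely checking that $H^+_{k,\mu-1}W^T_{k,\mu-1}$ acts as a left inverse of $W_{k,\mu-1}$ on the vector at hand, which is exactly where the \emph{minimum-norm} property of the ALS micro-step (Remark \ref{remark:pOrthKern}), and not merely the normal equation, is used; the rest is bookkeeping with the definitions of $W_{k,\mu}$, $G^+_{k,\mu}$, $H_{k,\mu-1}$ and $M_{k,\mu}$.
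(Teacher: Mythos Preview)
Your proof is correct and follows essentially the same route as the paper's own argument: both use $v_{k,\mu}=W_{k,\mu-1}p_{\mu-1}^{k+1}$, the identity $H^+_{k,\mu-1}H_{k,\mu-1}p_{\mu-1}^{k+1}=p_{\mu-1}^{k+1}$ via Remark~\ref{remark:pOrthKern}, and the recursion $p_\mu^{k+1}=G^+_{k,\mu}M_{k,\mu}p_{\mu-1}^{k+1}$ from Corollary~\ref{cor:ALSrecursion}, then apply $W_{k,\mu}$. The paper presents this as a single forward chain starting from $N_{k,\mu}v_{k,\mu}$, while you assemble the pieces in the reverse direction, but the content is identical; your explicit remark on why the minimum-norm property (and not merely the normal equation) is needed, and your handling of the boundary case $\mu=1$ explaining the hypothesis $k\ge 2$, are welcome clarifications that the paper leaves implicit.
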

\begin{proof}
According to Corollary \ref{cor:ALSrecursion}, Remark
\ref{remark:pOrthKern}, and definition of $v_{k, \mu+1}$, we have
that
\begin{eqnarray*}
  N_{k, \mu} v_{k, \mu} &=& W_{k, \mu } G^+_{k, \mu} M_{k, \mu} H^+_{k,\mu-1} W^T_{k,
  \mu-1} W_{k, \mu-1} p_{\mu-1}^{k+1}
  = W_{k, \mu } G^+_{k, \mu} M_{k, \mu} H^+_{k,\mu-1}  H_{k, \mu-1}
  p_{\mu-1}^{k+1}\\
  &=& W_{k, \mu } G^+_{k, \mu} M_{k, \mu} p_{\mu-1}^{k+1} = W_{k, \mu } G^+_{k, \mu} W^T_{k, \mu} b = W_{k, \mu } p_{\mu}^{k+1} = v_{k,
  \mu+1}.
\end{eqnarray*}

\end{proof}
\begin{lemma}\label{lemma:recursionTan}
Let $\left(N_{k, \mu}\right)_{k \in \N,\, \mu\in \N_L}$ be the
sequence of matrices from Lemma \ref{lemma:RecursionV}, $\bar{v} \in
\Vp \setminus\{0\}$, and $R \in \R^{N \times N-1}$ an orthogonal
matrix with $\Span(\bar{v})^\bot = \Bild(R)$, i.e. the column
vectors of $R$ form an orthonormal basis of the linear space
$\Span(\bar{v})^\bot$. Assume further that $c_{k,\mu}:=
\frac{\bar{v}^T}{\|\bar{v}\|}v_{k, \mu} \in \R \setminus\{0\}$ and
$s_{k, \mu }:= R^Tv_{k, \mu} \in \R^{N-1} \setminus\{0\}$ holds
true. Then we have the following recursion formula for the tangent
of the angles:
\begin{equation*}
    \left| \tan\angle\left[\bar{v}, v_{k, \mu+1}\right] \right|= \left|\frac{q^{(s)}_{k,\mu}}{q^{(c)}_{k,\mu}}\right|\left| \tan\angle\left[\bar{v}, v_{k, \mu}\right]
    \right|,
\end{equation*}
where
\begin{eqnarray*}
  q_{k,\mu}^{(s)} &:=&\frac{\left\|R^T N_{k, \mu} \ve{v} \,\, c_{k,\mu} +  R^T N_{k, \mu} R \,\, s_{k,\mu} \right\|}{\|s_{k,\mu}\|},  \\
  q_{k,\mu}^{(c)} &:=&\frac{\left| \ve{v}^T N_{k, \mu} \ve{v} \,\, c_{k,\mu} + \ve{v}^T N_{k, \mu} R \,\,
  s_{k,\mu}\right|}{|c_{k,\mu}|}.
\end{eqnarray*}

\end{lemma}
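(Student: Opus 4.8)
The plan is to reduce the claim to a change of orthonormal coordinates on $\Vp\cong\R^N$ adapted to $\bar{v}$. Write $\ve{v}:=\bar{v}/\|\bar{v}\|$ for the corresponding unit vector; then $\ve{v}$ together with the columns of $R$ form an orthonormal basis of $\R^N$, so that $[\ve{v}\mid R]$ is an orthogonal matrix and the completeness relation $\ve{v}\,\ve{v}^T+R R^T=\Id$ holds. Applying it to $v_{k,\mu}$ gives the orthogonal decomposition
\begin{equation*}
    v_{k,\mu}=\ve{v}\,c_{k,\mu}+R\,s_{k,\mu},\qquad c_{k,\mu}=\ve{v}^T v_{k,\mu},\quad s_{k,\mu}=R^T v_{k,\mu},
\end{equation*}
whose two summands are mutually orthogonal, with $\|R\,s_{k,\mu}\|=\|s_{k,\mu}\|$ since $R$ has orthonormal columns. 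Hence the component of $v_{k,\mu}$ parallel to $\bar{v}$ has length $|c_{k,\mu}|$ and the component orthogonal to $\bar{v}$ has length $\|s_{k,\mu}\|$, which already yields
\begin{equation*}
    \left|\tan\angle\left[\bar{v},v_{k,\mu}\right]\right|=\frac{\|s_{k,\mu}\|}{|c_{k,\mu}|},
\end{equation*}
and likewise with $\mu$ replaced by $\mu+1$; all denominators appearing are nonzero by hypothesis.

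Next I would insert this decomposition into the micro-step recursion $v_{k,\mu+1}=N_{k,\mu}v_{k,\mu}$ furnished by Lemma \ref{lemma:RecursionV} and use the linearity of $N_{k,\mu}$:
\begin{equation*}
    v_{k,\mu+1}=N_{k,\mu}\,\ve{v}\,c_{k,\mu}+N_{k,\mu}\,R\,s_{k,\mu}.
\end{equation*}
Projecting this identity onto $\Span(\bar{v})$ with $\ve{v}^T$ and onto $\Span(\bar{v})^\bot$ with $R^T$ gives
\begin{equation*}
    c_{k,\mu+1}=\ve{v}^T N_{k,\mu}\ve{v}\,c_{k,\mu}+\ve{v}^T N_{k,\mu}R\,s_{k,\mu},\qquad s_{k,\mu+1}=R^T N_{k,\mu}\ve{v}\,c_{k,\mu}+R^T N_{k,\mu}R\,s_{k,\mu},
\end{equation*}
and, comparing with the definitions of $q^{(c)}_{k,\mu}$ and $q^{(s)}_{k,\mu}$ in the statement, these read precisely $|c_{k,\mu+1}|=q^{(c)}_{k,\mu}\,|c_{k,\mu}|$ and $\|s_{k,\mu+1}\|=q^{(s)}_{k,\mu}\,\|s_{k,\mu}\|$.

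Finally I would combine the two steps: substituting these relations into $\left|\tan\angle[\bar{v},v_{k,\mu+1}]\right|=\|s_{k,\mu+1}\|/|c_{k,\mu+1}|$ produces
\begin{equation*}
    \left|\tan\angle\left[\bar{v},v_{k,\mu+1}\right]\right|=\frac{q^{(s)}_{k,\mu}\,\|s_{k,\mu}\|}{q^{(c)}_{k,\mu}\,|c_{k,\mu}|}=\left|\frac{q^{(s)}_{k,\mu}}{q^{(c)}_{k,\mu}}\right|\left|\tan\angle\left[\bar{v},v_{k,\mu}\right]\right|,
\end{equation*}
which is the assertion. No real difficulty arises: the computation is elementary once one observes that $[\ve{v}\mid R]$ is orthogonal, and the only thing requiring care is the bookkeeping of which quantities are assumed nonzero, so that every ratio — in particular the denominator $q^{(c)}_{k,\mu}=|c_{k,\mu+1}|/|c_{k,\mu}|$ — is well defined; one should also keep in mind that the recursion of Lemma \ref{lemma:RecursionV} is only available for $k\geq2$.
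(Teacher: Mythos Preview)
Your proof is correct and follows essentially the same approach as the paper: both build the orthonormal basis $[\ve{v}\mid R]$, decompose $v_{k,\mu}$ into its $\bar{v}$-parallel and $\bar{v}$-orthogonal components, apply the recursion $v_{k,\mu+1}=N_{k,\mu}v_{k,\mu}$ from Lemma~\ref{lemma:RecursionV} blockwise, and read off the tangent ratio. Your bookkeeping of the nonvanishing hypotheses and the $k\geq 2$ caveat is, if anything, slightly more careful than the paper's own write-up.
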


\begin{proof}
The block matrix
\begin{equation*}
    V:=\left[
         \begin{array}{cc}
         \ve{v} & R\\
         \end{array}
       \right]\in \R^{N \times N}, \quad  \left(\,\ve{v}:=\bar{v}/\|\bar{v}\| \,\right).
\end{equation*}
is orthogonal, i.e. the columns of the matrix $V$ build an
orthonormal basis of the tensor space $\Vp$. The tensor $v_{k,\mu}$
and the matrix $N_{k,\mu}$ are represented with respect to the basis
$V$, i.e
\begin{eqnarray*}
  v_{k, \mu} &=& V V^Tv_{k, \mu}= \left[
         \begin{array}{cc}
         \ve{v} & R\\
         \end{array}
       \right]\left(
                \begin{array}{c}
                  \ve{v}^Tv_{k,\mu} \\
                  R^T v_{k,\mu} \\
                \end{array}
              \right)= \left[
         \begin{array}{cc}
         \ve{v}& R\\
         \end{array}
       \right]\left(
                \begin{array}{c}
                  c_{k,\mu} \\
                  s_{k,\mu} \\
                \end{array}
              \right)
\end{eqnarray*}
and
\begin{eqnarray*}
  N_{k, \mu} &=& V \left(V^T N_{k, \mu} V \right)V^T = \left[
         \begin{array}{cc}
         \ve{v} & R\\
         \end{array}
       \right] \left[
       \begin{array}{cc}
       \ve{v}^T N_{k, \mu} \ve{v} &  \ve{v}^T N_{k, \mu} R\\
       R^T N_{k, \mu} \ve{v}&  R^T N_{k, \mu} R\\
       \end{array}\right]
  \left[
         \begin{array}{cc}
         \ve{v} & R\\
         \end{array}
       \right]^T.
\end{eqnarray*}
The recursion formula (\ref{eq:recursionV}) leads to the recursion
of the coefficient vector
\begin{eqnarray*}
\left( \begin{array}{c}
        c_{k+1,\mu} \\
        s_{k+1,\mu} \\
        \end{array}
        \right) = \left[
       \begin{array}{cc}
       \ve{v}^T N_{k, \mu} \ve{v} &  \ve{v}^T N_{k, \mu} R\\
       R^T N_{k, \mu} \ve{v}&  R^T N_{k, \mu} R\\
       \end{array}\right]\left(
                \begin{array}{c}
                  c_{k,\mu} \\
                  s_{k,\mu} \\
                \end{array}
              \right)=\left(
                \begin{array}{c}
                  \ve{v}^T N_{k, \mu} \ve{v} \,\, c_{k,\mu} + \ve{v}^T N_{k, \mu} R \,\, s_{k,\mu}\\
                  R^T N_{k, \mu} \ve{v} \,\, c_{k,\mu} +  R^T N_{k, \mu} R \,\, s_{k,\mu} \\
                \end{array}
              \right).
\end{eqnarray*}
Since $\|s_{k, \mu}\| \neq 0$ and $|c_{k, \mu}| \neq 0$ we have
\begin{eqnarray*}
  \tan^2\angle[\bar{v}, v_{k, \mu+1}]  &=& \frac{\dotprod{RR^T v_{k, \mu+1}}{v_{k, \mu+1}}}{\dotprod{\ve{v}\ve{v}^T v_{k, \mu+1}}{v_{k,
  \mu+1}}} = \frac{\|R^T v_{k, \mu+1}\|^2}{\left(\ve{v}^T v_{k,
  \mu+1}\right)^2}=\frac{\|s_{k, \mu+1}\|^2}{(c_{k, \mu+1})^2}=\frac{\left(q_{k,\mu}^{(s)}\right)^2}{\left(q_{k,\mu}^{(c)}\right)^2}\frac{\|s_{k, \mu}\|^2}{\left(c_{k,
  \mu}\right)^2}\\
  &=&\left(\frac{q_{k,\mu}^{(s)}}{q_{k,\mu}^{(c)}}\right)^2
  \frac{\|R^T v_{k, \mu}\|^2}{\left(\ve{v}^T v_{k,
  \mu}\right)^2} = \left(\frac{q_{k,\mu}^{(s)}}{q_{k,\mu}^{(c)}}\right)^2 \tan^2\angle[\bar{v}, v_{k, \mu}].
\end{eqnarray*}
\end{proof}
\begin{theorem}\label{theorem:isoAP}
Suppose that the sequence $(\ve{p}_k)_{k \in \N}\subset P$ from
Algorithm \ref{alg:ALS} fulfils assumption A1. If one accumulation
point $\bar{v} \in \Ap(v_k)\neq\emptyset$ is isolated, then we have
\begin{equation*}
    v_k \xrightarrow[k \rightarrow \infty]{}\bar{v}.
\end{equation*}
\end{theorem}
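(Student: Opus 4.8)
The plan is to combine Lemma~\ref{lemma:dist} (consecutive iterates come arbitrarily close) with the elementary topological fact that a bounded sequence whose consecutive differences tend to $0$ converges as soon as it possesses an isolated accumulation point. First I would record the two ingredients: assumption A1 together with the continuity of $U$ makes $(v_k)_{k\in\N}$ bounded, so that $\Ap(v_k)$ is a closed subset of $\Vp$; and Lemma~\ref{lemma:dist} gives $\|v_{k+1}-v_k\|_A\to0$, hence also $\|v_{k+1}-v_k\|\to0$, since $A>0$ makes $\|\cdot\|_A$ equivalent to the Euclidean norm.

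Since $\bar{v}$ is isolated in the closed set $\Ap(v_k)$, I pick $\varepsilon>0$ such that the closed ball $\overline{B}(\bar{v},2\varepsilon)$ contains no accumulation point of $(v_k)$ besides $\bar{v}$, and argue by contradiction assuming $v_k\not\to\bar{v}$. Then there is $\eta\in(0,\varepsilon)$ with $\|v_k-\bar{v}\|\ge\eta$ for infinitely many $k$, while $\|v_k-\bar{v}\|<\eta/2$ also for infinitely many $k$ because $\bar{v}$ is an accumulation point. Choosing $k_0$ with $\|v_{k+1}-v_k\|<\eta/2$ for all $k\ge k_0$ and, for each large $N$, an index $k\ge N$ lying in the $(\eta/2)$-ball around $\bar{v}$ followed by a later index lying outside the $\eta$-ball, I let $m\ge k$ be the last index before that one still in the $(\eta/2)$-ball; maximality forces $\|v_{m+1}-\bar{v}\|\ge\eta/2$, while the triangle inequality gives $\|v_{m+1}-\bar{v}\|\le\|v_{m+1}-v_m\|+\|v_m-\bar{v}\|<\eta$. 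Thus $v_{m+1}$ lies in the closed annulus $\{\,v:\eta/2\le\|v-\bar{v}\|\le\varepsilon\,\}$, and as $N$ is arbitrary, infinitely many iterates lie in this annulus.

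The annulus is compact, so these iterates have a convergent subsequence whose limit $v^{*}$ is an accumulation point of $(v_k)$ with $\eta/2\le\|v^{*}-\bar{v}\|\le\varepsilon<2\varepsilon$; in particular $v^{*}\neq\bar{v}$, contradicting the choice of $\varepsilon$. Hence $v_k\to\bar{v}$. The only step that needs care is the annulus-crossing argument: one must use $\|v_{k+1}-v_k\|\to0$ to guarantee that each transition from the small ball around $\bar{v}$ to the exterior of the larger one deposits an iterate inside the compact annulus; the remainder is a routine boundedness-plus-compactness consideration.
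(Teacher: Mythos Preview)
Your proof is correct and follows essentially the same route as the paper: both arguments rest on Lemma~\ref{lemma:dist} ($\|v_{k+1}-v_k\|_A\to 0$) together with a standard Ostrowski-type ``exit point'' construction. The only cosmetic difference is where the contradiction is landed: the paper tracks the last index $k'(k)$ before the sequence leaves the $\varepsilon$-ball, shows that this exit subsequence must converge to $\bar v$, and then concludes $\|v_{k'(k)+1}-v_{k'(k)}\|_A\ge \varepsilon/2$, directly contradicting Lemma~\ref{lemma:dist}; you instead trap infinitely many iterates in a compact annulus and extract a second accumulation point there, contradicting the isolation of $\bar v$. Both finishes are standard and equivalent in strength; the paper's version works entirely in the $A$-norm and avoids the extra compactness extraction, while yours is a touch more explicit about why an iterate must land in the annulus.
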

Furthermore, we have that either the ALS method converges after
finitely many iteration steps or
\begin{equation*}
    \left|\tan\angle[\bar{v}, v_{k, \mu+1}]\right| \leq q_\mu \left|\tan\angle[\bar{v}, v_{k, \mu}]
    \right|,
\end{equation*}
where
\begin{equation*}
    q_\mu:= \limsup_{k \rightarrow
    \infty}\left| \frac{q_{k,\mu}^{(s)}}{q_{k,\mu}^{(c)}}\right|.
\end{equation*}
\begin{proof}
Let $\varepsilon > 0$ such that $\bar{v}$ is the only accumulation
point in $\bar{U}:=\left\{v\in \Vp :  \|\bar{v} - v\|_A \leq
\varepsilon\right\}$. Assuming that the sequence $(v_k)_{k \in
\N}\subset \Vp$ from the ALS algorithm does not converge to
$\bar{v}$ and let $\Ip \subset \N$ be a subset with
\begin{equation*}
    \|\bar{v}- v_k \|_A\leq \varepsilon
\end{equation*}
for all $k \in \Ip$. Since $\bar{v}$ is the only accumulation in
$\bar{U}$ and $(v_k)_{k \in \N}$ does not converge to $\bar{v}$ the
following set $\Ip_k$ is for all $k \in \Ip$ well-defined and
finite:
\begin{equation*}
    \Ip_k:=\left\{ k'\in \N : \, \|\bar{v} - v_{\tilde{k}}\|_A\leq \varepsilon \mbox{ for all } k\leq \tilde{k} \leq k'\right\}.
\end{equation*}
The definition of the map $k' : \Ip \rightarrow \N, \, k \mapsto
k'(k):=\max \Ip_k$ implies that
\begin{eqnarray*}
  \|\bar{v}- v_{k'(k)}\|_A \leq \varepsilon \, \mbox{ and } \,
  \|\bar{v}- v_{k'(k)+1}\|_A > \varepsilon
\end{eqnarray*}
for all $k \in \Ip$. Since $\bar{v}$ is the only accumulation point
of $(v_k)_{k\in \N}$ in $\bar{U}$ it follows that the subsequence
$(v_{k'(k)})_{k \in \Ip}$ converges to $\bar{v}$. Therefore, we have
\begin{equation*}
    \|\bar{v}- v_{k'(k)}\|_A \leq \frac{\varepsilon}{2}
\end{equation*}
and
\begin{equation*}
    \|v_{k'(k)+1}- v_{k'(k)}\|_A \geq \|\bar{v}- v_{k'(k)+1}\|_A - \|\bar{v}- v_{k'(k)}\|_A \geq \frac{\varepsilon}{2}
\end{equation*}
for sufficient large $k \in \Ip$. But this contradicts the statement
$\|v_{k+1} - v_{k}\|_A\xrightarrow[k \rightarrow \infty]{}0$ from
Lemma \ref{lemma:dist}. The inequality for the rate of convergence
of an ALS micro-step $\left|\tan\angle[\bar{v}, v_{k, \mu+1}]\right|
\leq q_\mu \left|\tan\angle[\bar{v}, v_{k, \mu}]\right|$ follows
direct from Lemma \ref{lemma:recursionTan} and the definition of
$q_\mu$. Note that in Lemma \ref{lemma:recursionTan}, $c_{k,
\mu}\neq0$ since $\lim_{k \rightarrow \infty} v_{k, \mu}=\bar{v}$.
If $s_{k_0, \mu}=0$ for some $k_0\in \N$, then the ALS method
converges after finitely many iteration steps.
\end{proof}
\begin{corollary}
Suppose that the sequence $(\ve{p}_k)_{k \in \N}\subset P$ fulfils
A2 and assume that the set of critical points $\mathfrak{M}$ is
discrete,\footnote{In topology, a set which is made up only of
isolated points is called discrete.} then the sequence of
represented tensors $(v_k)_{k \in \N}$ from the ALS method is
convergent.
\end{corollary}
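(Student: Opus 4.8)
The plan is to combine Theorem~\ref{theo:AcuCrit}, which pins all accumulation points of $(v_k)_{k\in\N}$ inside $\mathfrak{M}$, with Theorem~\ref{theorem:isoAP}, which upgrades a single \emph{isolated} accumulation point to full convergence. First I would note that assumption A2 contains A1, so $(\ve{p}_k)_{k\in\N}$ is bounded; since $U$ is continuous, $(v_k)_{k\in\N}$ is bounded as well, and by Remark~\ref{rem:APnotEmpty} the set $\Ap(v_k)$ is non-empty (and, being the accumulation set of a bounded sequence, compact).

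Next, because the sequence satisfies A2, Theorem~\ref{theo:AcuCrit} yields $\Ap(v_k)\subseteq\mathfrak{M}$. Now fix any $\bar v\in\Ap(v_k)$, which exists by the previous step. Since $\mathfrak{M}$ is discrete, $\bar v$ is an isolated point of $\mathfrak{M}$, so there is an $\varepsilon>0$ with $\{v\in\Vp:\|\bar v-v\|_A\le\varepsilon\}\cap\mathfrak{M}=\{\bar v\}$. The inclusion $\Ap(v_k)\subseteq\mathfrak{M}$ then forces the same ball to meet $\Ap(v_k)$ only in $\bar v$; that is, $\bar v$ is an isolated accumulation point of $(v_k)_{k\in\N}$ in the sense required by Theorem~\ref{theorem:isoAP}.

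Finally, A2 implies A1, so Theorem~\ref{theorem:isoAP} applies to this isolated accumulation point $\bar v$ and gives $v_k\xrightarrow[k\rightarrow\infty]{}\bar v$, which is exactly the asserted convergence. I do not expect a genuine obstacle: the only step deserving a line of care is the passage from ``$\bar v$ isolated in $\mathfrak{M}$'' to ``$\bar v$ isolated in $\Ap(v_k)$'', and this is immediate from $\Ap(v_k)\subseteq\mathfrak{M}$. (Alternatively one could argue globally: $\Ap(v_k)$ is a compact subset of the discrete set $\mathfrak{M}$, hence finite, and a bounded sequence whose finitely many accumulation points are all isolated must converge in view of Lemma~\ref{lemma:dist}; but routing the argument through Theorem~\ref{theorem:isoAP} is the cleanest.)
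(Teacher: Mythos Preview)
Your proposal is correct and follows exactly the route the paper takes: the paper's proof is the single line ``Follows directly from Theorem~\ref{theorem:isoAP} and Theorem~\ref{theo:AcuCrit},'' and you have simply spelled out the (straightforward) logic connecting them. The only detail you added beyond the paper's one-liner is the observation that isolation in $\mathfrak{M}$ together with $\Ap(v_k)\subseteq\mathfrak{M}$ gives isolation in $\Ap(v_k)$, which is indeed the point one has to check.
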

\begin{proof}
Follows directly from Theorem \ref{theorem:isoAP} and Theorem
\ref{theo:AcuCrit}.
\end{proof}
\begin{remark}\label{rem:convergenceRate}$ $
\begin{itemize}
  \item The convergence rate for an entire ALS iteration step is given by $q:=\prod_{\mu=1}^L q_{\mu-1}$, since
    \begin{equation*}
    \left|\tan\angle[\bar{v}, v_{k+1}]\right| = \left|\tan\angle[\bar{v}, v_{k,L}]\right| \leq q_{L-1} \left|\tan\angle[\bar{v}, v_{k,L-1}]\right| \leq \prod_{\mu=1}^L q_{\mu-1} \left|\tan\angle[\bar{v}, v_{k,0}]\right| = q\left|\tan\angle[\bar{v}, v_{k}]\right|.
    \end{equation*}
\item Without further assumptions on the tensor $b$ from Eq. (\ref{equ:deff}), one cannot say more about the rate of convergence. But the ALS method can converge sublinearly, Q-linearly, and even Q-superlinearly.
We refer the reader to \cite{OR70} for a detailed description of
convergence speed.
    \begin{description}
      \item[-] If $q=0$, then the sequence $\left( \left|\tan\angle[\bar{v}, v_{k}]\right|\right)_{k \in \N}$ converges Q-superlinearly.
      \item[-] If $q <1$, then the sequence $\left( \left|\tan\angle[\bar{v}, v_{k}]\right|\right)_{k \in \N}$ converges at least Q-linearly.
      \item[-] If $q= 1$, then the sequence $\left(\left|\tan\angle[\bar{v}, v_{k}]\right|\right)_{k \in \N}$ converges  sublinearly.
    \end{description}

A specific tensor format $U$ has practically no impact on the
different convergence rates. Since we can find explicit examples for
all cases already for rank-one tensors. Please note that the
representation of rank-one tensors is included in all tensor formats
of practical interest.
      In the following, we give a brief overview of our results about the convergence rates for the tensor rank-one approximation, please see \cite{AramMikeALS1} for proofs and detailed description.
      The multilinear map that describes the representation of rank-one tensors is given by

      \begin{eqnarray*}
        U: \bigtimes_{\mu=1}^d \R^n&\rightarrow& \Ten_{\mu=1}^d \R^n\\
        (p_1, \dots, p_d) &\mapsto& U(p_1, \dots, p_d)=\Ten_{\mu=1}^d p_\mu.
      \end{eqnarray*}

      A tensor $b$ is called totally orthogonal decomposable if
      there exist $r \in \N$ with
      \begin{equation*}
        b = \sum_{j=1}^r \lambda_j \Ten_{\mu=1}^d b_{\mu, j} \quad (b_{\mu,j} \in \R^n)
      \end{equation*}
      such that for all $\mu \in \N_{d}$ and $j_1, j_2 \in \N_r$ the following holds:
      \begin{equation*}
        \dotprod{b_{\mu, j_1}}{b_{\mu, j_1}} = \delta_{j_1, j_2}.
      \end{equation*}
    The set of all totally orthogonal decomposable tensors is denoted by
    \begin{equation*}
        \mathcal{TO}=\left\{b \in \Vp : b \mbox{ is totally orthogonal
        decomposable}\right\} \subset \Vp.
    \end{equation*}
It is shown in \cite{AramMikeALS1} that the tensor rank-one
approximation of every $b \in \mathcal{TO}$ by means of the ALS
method converges Q-superlinearly, i.e. $q=0$.\\

For examples of Q-linear and sublinear convergence, we will consider
the tensor $b_\lambda \in \Vp$  given by
\begin{equation*}
    b_\lambda = \Ten_{\mu=1}^3 p + \lambda \left(p \ten q \ten q + q \ten p \ten q + q \ten q \ten p \right)
\end{equation*}
for some $\lambda \in \R_{\geq 0}$ and $p, q \in \R^n$ with $\|p\| =
\|q\| = 1$, $\dotprod{p}{q} = 0$. If $\lambda \leq \frac{1}{2}$, it
is shown in \cite{AramMikeALS1} that $\bar{v} = \Ten_{\mu=1}^3 p$ is
the unique best approximation of $b_\lambda$. Furthermore, for the
rate of convergence we have the following two cases:
\begin{itemize}
    \item[a)] For $\lambda = \frac{1}{2}$ it holds $q=1$, i.e. the sequence $\left(\left|\tan\angle[\bar{v}, v_{k}]\right|\right)_{k \in \N}$ converges sublinearly.
    \item[b)] For $\lambda < \frac{1}{2}$ the ALS method converges Q-linearly with the convergence rate
        \begin{equation*}
            q_\lambda = \left[\frac{\lambda}{2} \left(3 \lambda + \lambda^2 + \sqrt{(3 \lambda + \lambda^2)^2 + 4 \lambda}\right)\right]^3.
        \end{equation*}
\end{itemize}
This example is not restricted to $d=3$. The extension to higher
dimensions is straightforward, see \cite{AramMikeALS1} for details.
\end{itemize}
\end{remark}
\section{Numerical Experiments}\label{sec:numericalExamples}
In this subsection, we observe the convergence behavior of the ALS
method by using data from interesting examples and more importantly
from real applications. In all cases, we focus particularly on the
convergence rate.

\subsection{Example 1}
We consider an example introduced by Mohlenkamp in \cite[Section
4.3.5]{Mohlenkamp2013}. Here we have $A= \id$ and
\begin{equation*}
    b = 2 \underbrace{\underbrace{\left(
            \begin{array}{c}
              1 \\
              0 \\
            \end{array}
          \right)}_{e_1:=} \ten \left(
            \begin{array}{c}
              1 \\
              0 \\
            \end{array}
          \right) \ten \left(
            \begin{array}{c}
              1 \\
              0 \\
            \end{array}
          \right)}_{b_1:=} + \underbrace{\underbrace{\left(
            \begin{array}{c}
              0 \\
              1 \\
            \end{array}
          \right)}_{e_2:=} \ten \left(
            \begin{array}{c}
              0 \\
              1 \\
            \end{array}
          \right) \ten \left(
            \begin{array}{c}
              0 \\
              1 \\
            \end{array}
          \right)}_{b_2:=},
\end{equation*}
see Eq. (\ref{equ:deff}). The tensor $b$ is orthogonally
decomposable. Although the example is rather simple, it is of great
theoretical interest. It follows from Theorem \ref{theorem:isoAP}
and \cite{AramMikeALS1} that the rate of convergence for an ALS
micro- step is
\begin{equation*}
    q_\mu= \limsup_{k \rightarrow
    \infty}\left| \frac{q_{k,\mu}^{(s)}}{q_{k,\mu}^{(c)}}\right| = 0.
\end{equation*}
Here the ALS method converges Q-superlinearly. Let $\tau\geq 0$, our
initial guess is defined by
\begin{equation*}
    v_0(\tau):= \left(
               \begin{array}{c}
                 \tau \\
                 1 \\
               \end{array}
             \right) \ten \left(
                            \begin{array}{c}
                              \tau \\
                              1 \\
                            \end{array}
                          \right) \ten \left(
                            \begin{array}{c}
                              \tau \\
                              1 \\
                            \end{array}
                          \right).
\end{equation*}
Since
\begin{equation*}
    4\dotprod{\left(\begin{array}{c}
                 1 \\
                 0 \\
               \end{array}
             \right)}{\left(\begin{array}{c}
                  \tau \\
                 1 \\
               \end{array}
             \right)}^2=4 \tau^2 \quad \mbox{and} \quad     \dotprod{\left(\begin{array}{c}
                 0 \\
                 1 \\
               \end{array}
             \right)}{\left(\begin{array}{c}
                 \tau \\
                 1 \\
               \end{array}
             \right)}^2=1,
\end{equation*}
we have for $\tau <\frac{1}{2}$ that the initial guess $v_0(\tau)$
dominates at $b_2$. Therefore, the ALS iteration converge to $b_2$,
see \cite{AramMikeALS1} for details. In the our numerical test, the
tangents of the angle between the current iteration point and the
corresponding parameter of the dominate term $b_l$ ($1\leq l \leq
2$) is plotted in Figure \ref{bild:Beispiel1}, i.e.
\begin{eqnarray}\label{eq:defTan}
  \tan \phi_{k, l}
  &=&\sqrt{\frac{1-\cos^2{\phi_{k,l}}}{\cos^2{\phi_{k,l}}}},
\end{eqnarray}
where $\cos{\phi_{k,l}} = \frac{\dotprod{p_1^k}{e_l}}{\|p_1^k\|}$.
\begin{figure}[h]
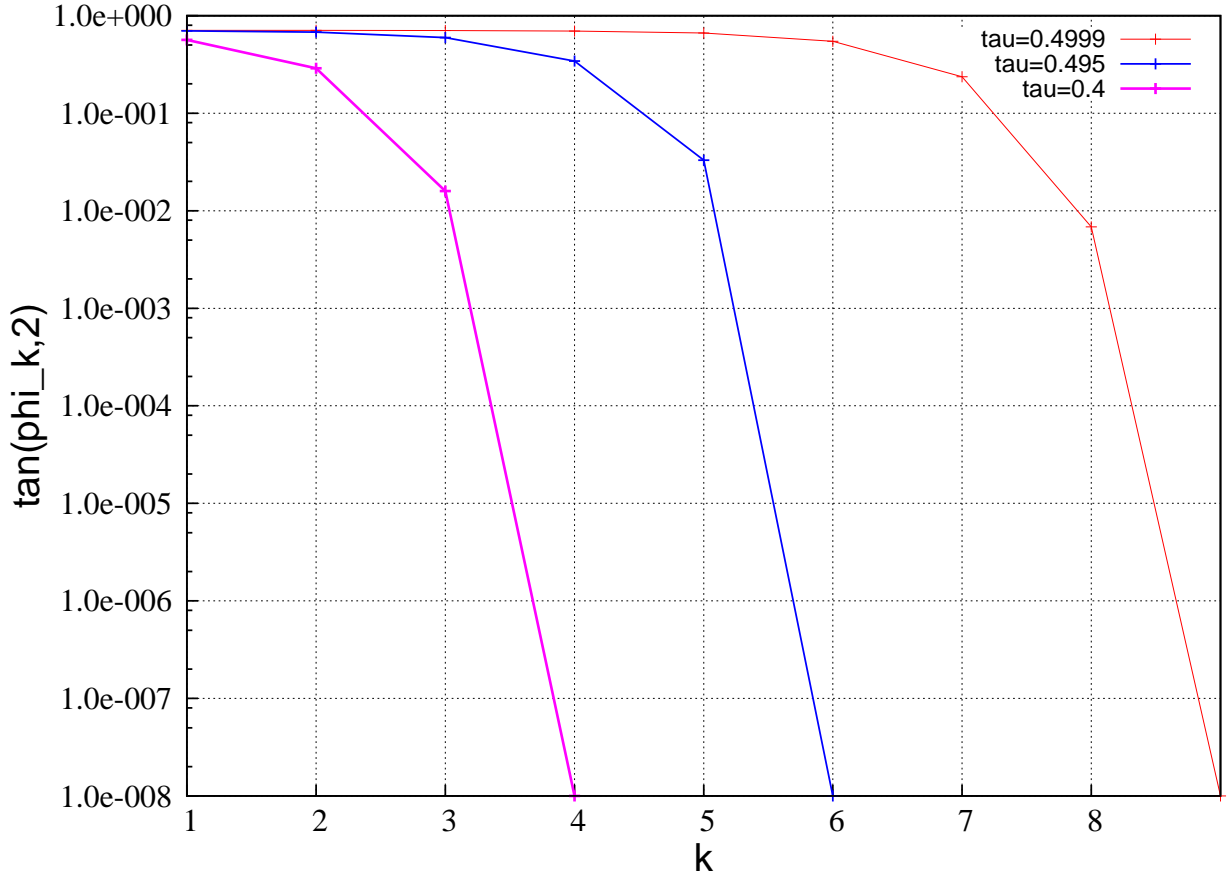
\label{bild:Beispiel1}
  \centering
  {\begin{turn}{-90}  \image{b}{0.7\textwidth} \end{turn}}
  \caption{The tangents $\tan \phi_{k, 2}$ from Eq. (\ref{eq:defTan}) is plotted for $\tau \in \{0.4, \, 0.495, \,0.4999\}$.}
  \label{bild:tan}
\end{figure}

\subsection{Example 2}
Most algorithms in ab initio electronic structure theory compute
quantities in terms of one- and two-electron integrals. In
\cite{esbe2011} we considered the low-rank approximation of the
two-electron integrals. In order to illustrate the convergence of
the ALS method on an example of practical interest, we use the
two-electron integrals of the so called AO basis for the CH$_4$
molecule. We refer the reader to \cite{esbe2011} for a detailed
description of our example. The ALS method converges here
Q-linearly, see Figure \ref{bild:CH4}.

\begin{figure}[h]
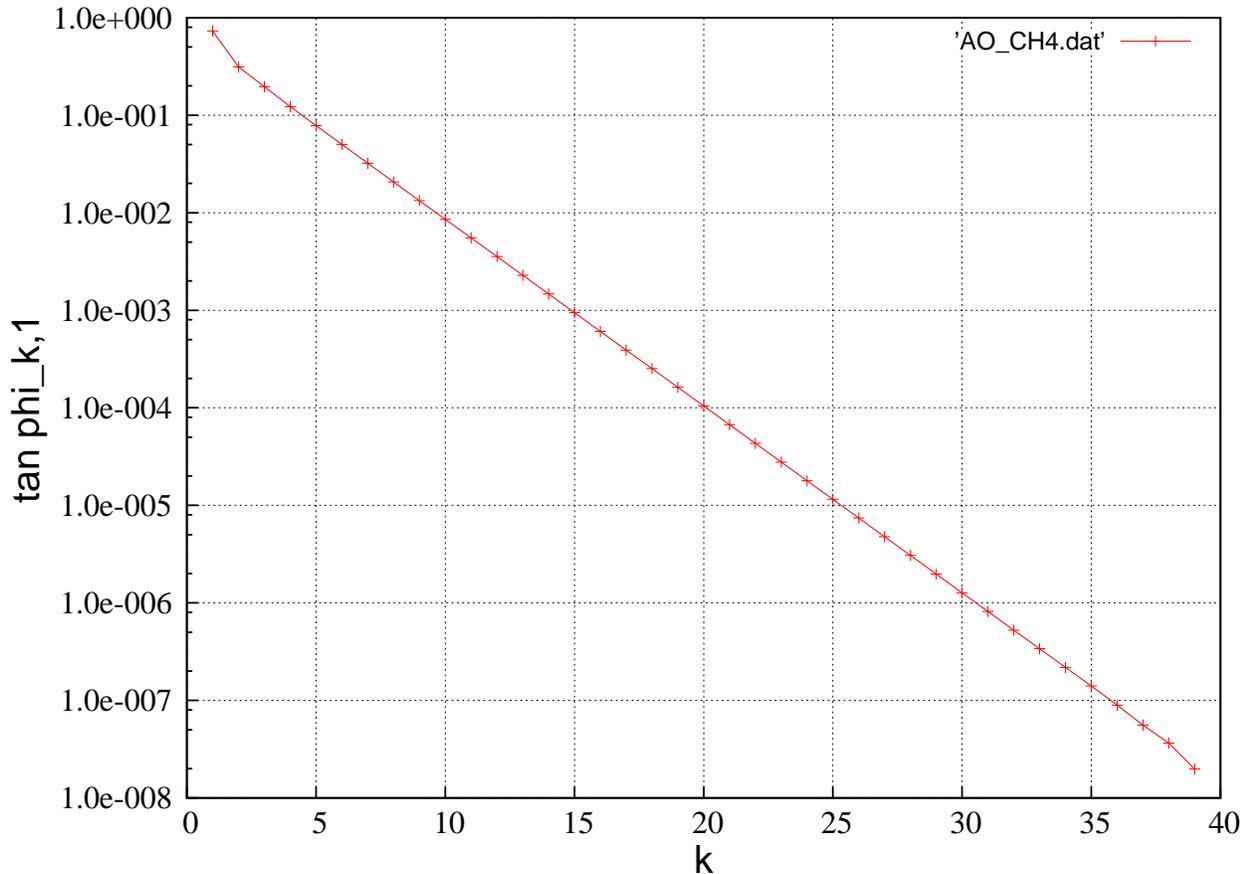

  \centering
  {\begin{turn}{-90}  \image{AO_CH4}{0.7\textwidth} \end{turn}}
  \caption{The approximation of two-electron integrals for methane is considered. The tangents of the angle between the current iteration point and the
limit point with respect to the iteration number is shown.}
  \label{bild:CH4}
\end{figure}

\subsection{Example 3}
We consider the tensor
\begin{equation*}
    b_\lambda = \Ten_{\mu=1}^3 p + \lambda \left(p \ten q \ten q + q \ten p \ten q + q \ten q \ten p \right)
\end{equation*}
from Remark \ref{rem:convergenceRate}. The vectors $p$ and $q$ are
arbitrarily generated orthogonal vectors with norm $1$. The values
of $\tan (\phi_{1, k})$ are plotted, where $\phi_{1,k}$ is the angle
between $p_1^k$ and the limit point $p$. For the case $\lambda =
0.5$ the convergence is sublinearly, whereas for $\lambda < 0.5$ it
is Q-linearly. According to Theorem \ref{theorem:isoAP} and
\cite{AramMikeALS1}, the rate of convergence for an ALS micro-step
is given by
\begin{equation*}
    q_\lambda= \limsup_{k \rightarrow
    \infty}\left| \frac{q_{k,1}^{(s, \lambda)}}{q_{k,1}^{(c, \lambda)}}\right| = \frac{\lambda}{2} \left(3 \lambda + \lambda^2 +
\sqrt{(3 \lambda + \lambda^2)^2 + 4 \lambda}\right).
\end{equation*}
For $\lambda=0.46$, we have for the convergence rate $q_{0.46}=
0.847$. In Figure \ref{bild:rateOfConvergence} the ratio $\frac{\tan
(\phi_{1, k+1})}{\tan (\phi_{1, k})}$ is plotted. The ratio
$\frac{\tan (\phi_{1, k+1})}{\tan (\phi_{1, k})}$ perfectly matches
to $q_{0.46}=0.847$. This plot shows on an example the precise
analytical description of the convergence rate.

\begin{figure}[h]
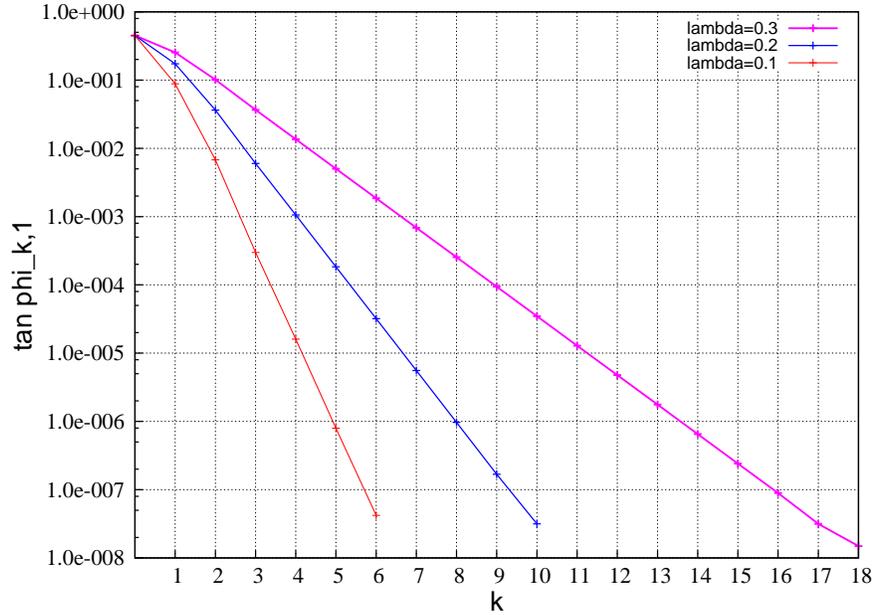
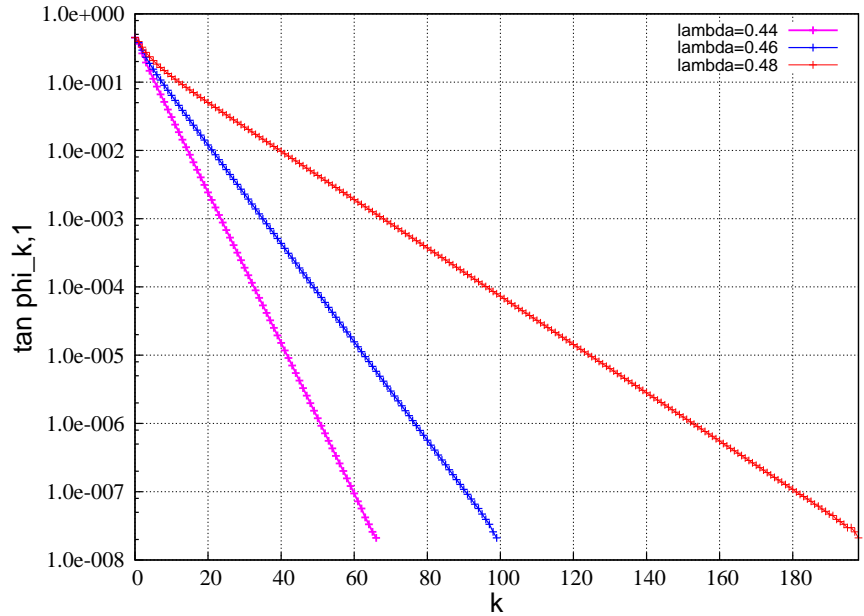

  \centering
  \subfigure[The tangents $\tan \phi_{k, 1}$ for $\lambda \in \{0.1, \, 0.2, \,0.3 \}$.]{{ \begin{turn}{-90}  \image{l11}{0.49\textwidth} \end{turn}}}\label{bild:data02}\hfill
  \subfigure[The tangents $\tan \phi_{k, 1}$ for $\lambda \in \{0.44, \, 0.46, \,0.48 \}$.]{{ \begin{turn}{-90}\image{l12}{0.49\textwidth}   \end{turn}}}\label{bild:data05}
  \caption{The approximation of $b_\lambda$ from Remark \ref{rem:convergenceRate} is considered. The tangents of the angle between the current iteration point and the
limit point with respect to the iteration number is plotted. For
$\lambda < 0.5$ the sequence converges Q-linearly with a convergence
rate $q_\lambda = \frac{\lambda}{2} \left(3 \lambda + \lambda^2 +
\sqrt{(3 \lambda + \lambda^2)^2 + 4 \lambda}\right)<1$.}
\end{figure}

\begin{figure}[h]
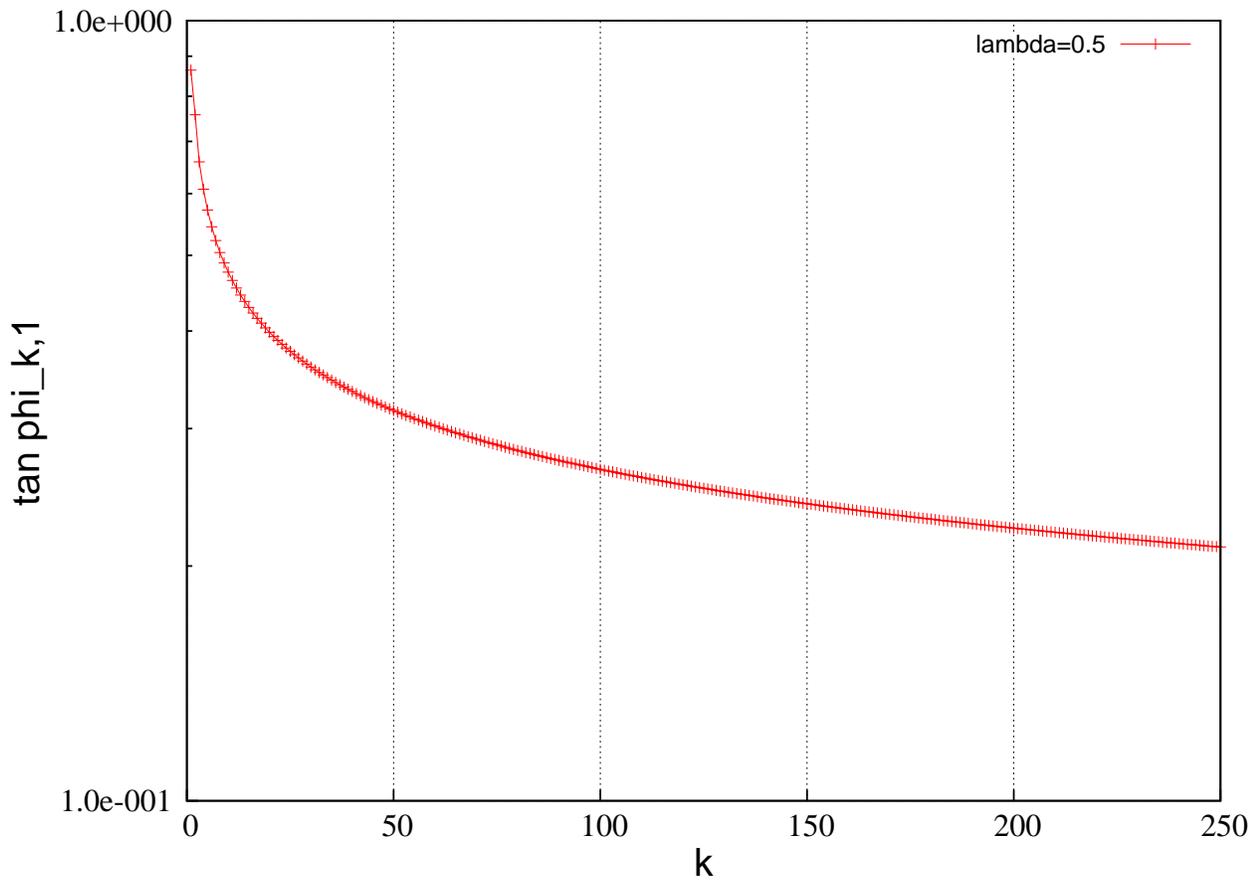

  \centering
 \begin{turn}{-90}\image{data05}{0.7\textwidth}   \end{turn}\label{bild:data05}
  \caption{The approximation of $b_\lambda$ from Remark \ref{rem:convergenceRate}  is considered. The tangents of the angle between the current iteration point and the
limit point with respect to the iteration number is plotted. For
$\lambda=0.5$, we have sublinear convergence since $q_{0.5}=1$.}
\end{figure}

\begin{figure}[h]
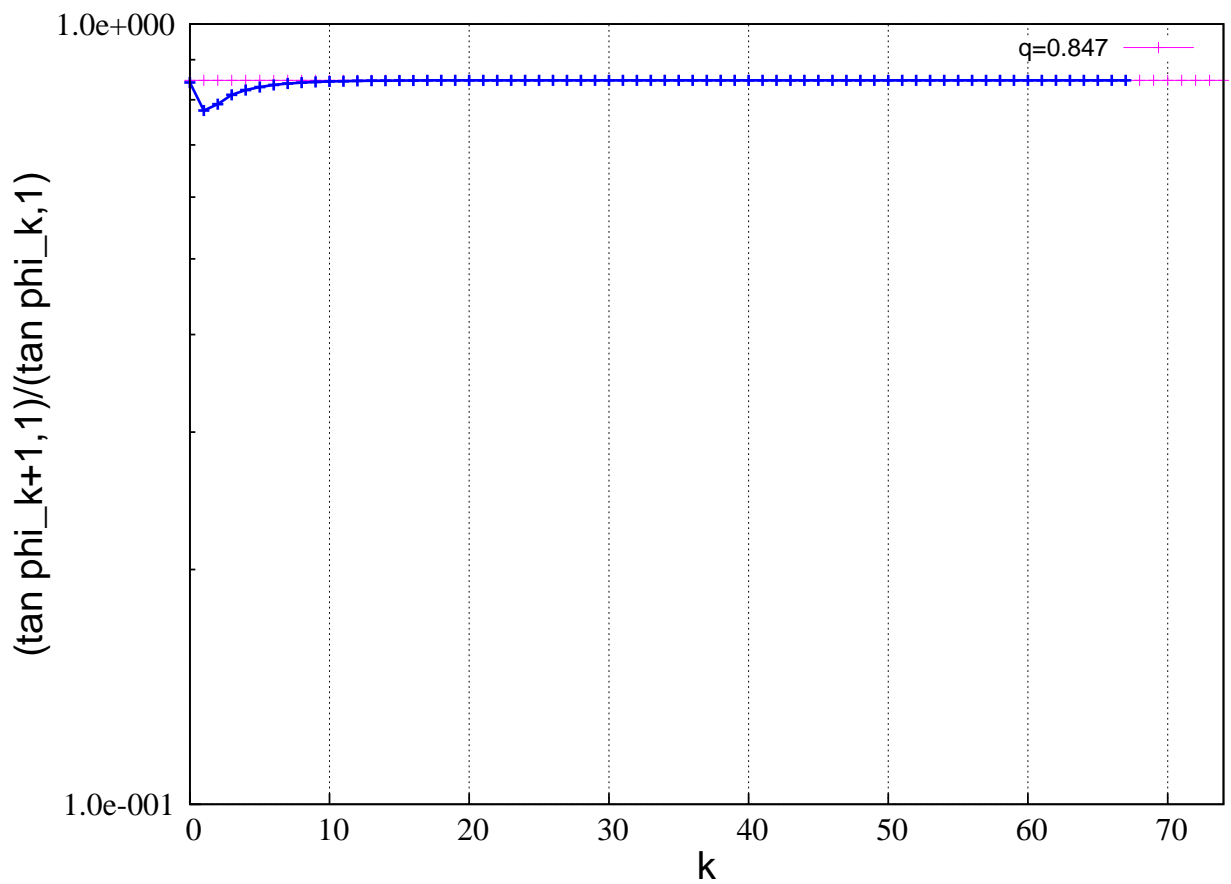

  \centering
 \begin{turn}{-90}\image{rate}{0.7\textwidth}   \end{turn}\label{bild:rateOfConvergence}
  \caption{The ratio $\frac{\tan (\phi_{1,k+1})}{\tan (\phi_{1, k})}$ is plotted for
  $\lambda=0.46$. The rate of convergence from Theorem \ref{theorem:isoAP}
  is for this example equal to $0.847$. The plot illustrates that the description of the convergence rate is accurate and sharp.}
\end{figure}


%

\bibliographystyle{plain}
\bibliography{../../BibTeX/all}

\begin{thebibliography}{10}

\bibitem{esbe2011}
U.~Benedikt, A.~Auer, M.~Espig, and W.~Hackbusch.
\newblock Tensor decomposition in post-hartree–fock methods. i. two-electron
  integrals and mp2.
\newblock {\em The Journal of Chemical Physics}, 134(5):--, 2011.

\bibitem{BEMO02}
G.~Beylkin and M.~J. Mohlenkamp.
\newblock Numerical operator calculus in higher dimensions.
\newblock {\em Proceedings of the National Academy of Sciences},
  99(16):10246--10251, 2002.

\bibitem{BEMO05}
G.~Beylkin and M.~J. Mohlenkamp.
\newblock Algorithms for numerical analysis in high dimensions.
\newblock {\em SIAM Journal on Scientific Computing}, 26(6):2133--2159, 2005.

\bibitem{brezinski1997projection}
C.~Brezinski.
\newblock {\em Projection methods for systems of equations}.
\newblock Studies in computational mathematics. Elsevier Science, 1997.

\bibitem{DELI06}
V.~{de~Silva} and L.-H. Lim.
\newblock Tensor rank and the ill-posedness of the best low-rank approximation
  problem.
\newblock {\em SIAM. J. Matrix Anal. and Appl.}, 30:1084--1127, 2008.

\bibitem{Doostan12}
Alireza Doostan, AbdoulAhad Validi, and Gianluca Iaccarino.
\newblock Non-intrusive low-rank separated approximation of high-dimensional
  stochastic models.
\newblock {\em Comput. Methods Appl. Mech. Engrg.}, 263:42--55, 2013.

\bibitem{ES08}
M.~Espig.
\newblock {\em Effiziente Bestapproximation mittels Summen von
  Elementartensoren in hohen Dimensionen}.
\newblock PhD thesis, Universit\"at Leipzig, 2008.

\bibitem{ESHAGA09}
M.~Espig, L.~Grasedyck, and W.~Hackbusch.
\newblock Black box low tensor rank approximation using fibre-crosses.
\newblock {\em Constructive approximation}, 2009.

\bibitem{ESHA09_1}
M.~Espig and W.~Hackbusch.
\newblock A regularized newton method for the efficient approximation of
  tensors represented in the canonical tensor format.
\newblock {\em Num. Math.}, 2012.

\bibitem{ESHAHARS11_2}
M.~Espig, W.~Hackbusch, S.~Handschuh, and R.~Schneider.
\newblock Optimization problems in contracted tensor networks.
\newblock {\em Computing and Visualization in Science}, 14(6):271--285, 2011.

\bibitem{ESHALIMA11_1}
M.~Espig, W.~Hackbusch, A.~Litvinenko, H.~G. Matthies, and E.~Zander.
\newblock Efficient analysis of high dimensional data in tensor formats.
\newblock {\em accepted for Springer Lecture Note series for Computational
  Science and Engineering}, 2011.

\bibitem{ESHAROSCH09_1}
M.~Espig, W.~Hackbusch, T.~Rohwedder, and R.~Schneider.
\newblock Variational calculus with sums of elementary tensors of fixed rank.
\newblock {\em Numerische Mathematik}, 2012.

\bibitem{AramMikeALS1}
M.~Espig and A.~Khachatryan.
\newblock Convergence of alternating least squares optimisation for rank-one
  approximation to high order tensors.
\newblock {\em Preprint:
  https://www.igpm.rwth-aachen.de/forschung/preprints/412}, 2014.

\bibitem{Waehnert2012}
Mike Espig, Wolfgang Hackbusch, Alexander Litvinenko, Hermann~G. Matthies, and
  Philipp W\"ahnert.
\newblock Efficient low-rank approximation of the stochastic {G}alerkin matrix
  in tensor formats.
\newblock {\em Computers \& Mathematics with Applications}, 2012.

\bibitem{GR10}
L.~Grasedyck.
\newblock Hierarchical singular value decomposition of tensors.
\newblock {\em SIAM J. Matrix Analysis Applications}, 31(4):2029--2054, 2010.

\bibitem{HA12}
W.~Hackbusch.
\newblock {\em Tensor Spaces and Numerical Tensor Calculus}.
\newblock Springer, 2012.

\bibitem{HAK09}
W.~Hackbusch and S.~K\"{u}hn.
\newblock A new scheme for the tensor representation.
\newblock {\em The journal of Fourier analysis and applications},
  5(15):706--722, 2009.

\bibitem{HoltzALS2012}
S.~Holtz, T.~Rohwedder, and R.~Schneider.
\newblock The alternating linear scheme for tensor optimization in the tensor
  train format.
\newblock {\em SIAM J. Sci. Comput.}, 34(2):683--713, March 2012.

\bibitem{Keller1965}
H.~Keller.
\newblock On the solution of singular and semidefinite linear systems by
  iteration.
\newblock {\em Journal of the Society for Industrial and Applied Mathematics
  Series B Numerical Analysis}, 2(2):281--290, 1965.

\bibitem{KhoromskijSchwab2011}
B.~N. Khoromskij and C.~Schwab.
\newblock Tensor-structured galerkin approximation of parametric and stochastic
  elliptic pdes.
\newblock {\em SIAM J. Sci. Comput.}, 33(1):364--385, February 2011.

\bibitem{Kolda09tensordecompositions}
T.~G. Kolda and B.~W. Bader.
\newblock Tensor decompositions and applications.
\newblock {\em SIAM REVIEW}, 51(3):455--500, 2009.

\bibitem{landsberg12}
Joseph~M. Landsburg, Yang Qi, and Ke~Ye.
\newblock On the geometry of tensor network states.
\newblock {\em Quantum Information and Computation}, 12(3-4):346--354, 2012.

\bibitem{Lee:2006}
Young-Ju Lee, Jinbiao Wu, Jinchao Xu, and Ludmil Zikatanov.
\newblock On the convergence of iterative methods for semidefinite linear
  systems.
\newblock {\em SIAM J. Matrix Anal. Appl.}, 28(3):634--641, August 2006.

\bibitem{MatthiesZander2012}
H.~G. Matthies and E.~Zander.
\newblock Solving stochastic systems with low-rank tensor compression.
\newblock 436(10):3819--3838, May 2012.

\bibitem{Mohlenkamp2013}
M.~J. Mohlenkamp.
\newblock {Musings on multilinear fitting.}
\newblock {\em Linear Algebra Appl.}, 438(2):834--852, 2013.

\bibitem{Nouy07}
Anthony Nouy.
\newblock A generalized spectral decomposition technique to solve a class of
  linear stochastic partial differential equations.
\newblock {\em Comput. Methods Appl. Mech. Engrg.}, 196(45-48):4521--4537,
  2007.

\bibitem{Nouy10}
Anthony Nouy.
\newblock Proper generalized decompositions and separated representations for
  the numerical solution of high dimensional stochastic problems.
\newblock {\em Arch. Comput. Methods Eng.}, 17(4):403--434, 2010.

\bibitem{OR70}
J.~M. Ortega and W.~C. Rheinboldt.
\newblock {\em Iterative Solution of Nonlinear Equations in Several Variables}.
\newblock Society for Industrial Mathematics, 1970.

\bibitem{Oseledets2011}
I.~V. Oseledets.
\newblock Dmrg approach to fast linear algebra in the tt-format.
\newblock {\em Comput. Meth. in Appl. Math.}, 11(3):382--393, 2011.

\bibitem{OS11}
I.~V. Oseledets.
\newblock Tensor-train decomposition.
\newblock {\em SIAM J. Scientific Computing}, 33(5):2295--2317, 2011.

\bibitem{OseledetsDolgov2012}
I.~V. Oseledets and S.~V. Dolgov.
\newblock Solution of linear systems and matrix inversion in the tt-format.
\newblock {\em SIAM J. Scientific Computing}, 34(5), 2012.

\bibitem{OSTY09}
I.~V. Oseledets and E.~Tyrtyshnikov.
\newblock Breaking the curse of dimensionality, or how to use svd in many
  dimensions.
\newblock {\em SIAM J. Scientific Computing}, 31(5):3744--3759, 2009.

\bibitem{UschmajewALS2013}
T.~Rohwedder and A.~Uschmajew.
\newblock On local convergence of alternating schemes for optimization of
  convex problems in the tensor train format.
\newblock {\em SIAM Journal on Numerical Analysis}, 51(2):1134--1162, 2013.

\bibitem{saad2000iterative}
Y.~Saad.
\newblock {\em Iterative Methods for Sparse Linear Systems}.
\newblock Society for Industrial and Applied Mathematics, 2000.

\bibitem{szaOst1996}
A.~Szab{\'o} and N.S. Ostlund.
\newblock {\em Modern Quantum Chemistry: Introduction to Advanced Electronic
  Structure Theory}.
\newblock Dover Books on Chemistry Series. Dover Publications, 1996.

\bibitem{UschmajewALS2012}
A.~Uschmajew.
\newblock Local convergence of the alternating least squares algorithm for
  canonical tensor approximation.
\newblock {\em SIAM Journal on Matrix Analysis and Applications},
  33(2):639--652, 2012.

\end{thebibliography}

\end{document}